\definecolor{c1}{rgb}{0.6, 0.0, 0.0 }
\definecolor{c2}{rgb}{0.0, 0.0, 0.6 }
\numberwithin{equation}{section}
\numberwithin{fig}{section}
 \newtheorem{thm}{Theorem}[section]
\newtheorem*{thmA}{Theorem A}
\newtheorem*{thmB}{Theorem B}
\newtheorem*{thmC}{Theorem C}
\newtheorem*{thmD}{Theorem D}
 \newtheorem{cor}[thm]{Corollary}
 \newtheorem{lem}[thm]{Lemma}
 \newtheorem{prop}[thm]{Proposition}
 \theoremstyle{definition}
 \newtheorem{dfn}[thm]{Definition}
 \newtheorem*{dfn*}{Definition}
 \theoremstyle{remark}
 \newtheorem{rem}[thm]{Remark}
 \newtheorem*{ex}{Example}
\numberwithin{equation}{section}
\newcommand\cA{\mathcal A}
\newcommand\cB{\mathcal B}
\newcommand\cG{\mathcal G}
\newcommand\cH{\mathcal H}
\newcommand\cK{\mathcal K}
\newcommand\cL{\mathcal L}
\newcommand\cP{\mathcal P}
\newcommand\NN{\mathbb N}
\newcommand\RR{\mathbb R}
\newcommand\frS{\mathfrak S}
\newcommand\fra{\mathfrak a}
\newcommand\eps{\varepsilon}
\newcommand\dis{\displaystyle}
\newcommand\ov{\overline}
\newcommand\wt{\widetilde}
\newcommand\wh{\widehat}
\newcommand{\defeq}{\mathrel{\mathop:}=}
\newcommand{\defequ}{\mathrel{\mathop:}\hspace*{-0.72ex}&=}
\newcommand\sess{\sigma_{\rm ess}}
\DeclareMathOperator\dom{dom}
\DeclareMathOperator\ran{ran}
\DeclareMathOperator*{\strlim}{s-lim}
\newcommand\void[1]{}
\def\eps{\varepsilon}
\def\sess{\sigma_{\rm ess}}
\newcommand\frq{\mathfrak q}
\newcommand\frs{\mathfrak s}
\def\sS{{\mathfrak S}}
      \def\dC{{\mathbb C}}
   \def\dN{{\mathbb N}}   
      \def\dR{{\mathbb R}}
\def\cA{{\mathcal A}}   \def\cB{{\mathcal B}}   
\def\cG{{\mathcal G}}   \def\cH{{\mathcal H}}   
   \def\cK{{\mathcal K}}   \def\cL{{\mathcal L}}
\def\cP{{\mathcal P}}      
   \def\cT{{\mathcal T}}
\begin{document}

\title[Schr\"odinger operators with $\delta$ and $\delta'$-potentials]{Schr\"odinger operators
with $\delta$ and $\delta'$-potentials supported on hypersurfaces}

\author[J. Behrndt]{Jussi Behrndt}
\address{Technische Universit\"{a}t Graz,
Institut f\"{u}r Numerische Mathematik, 
Steyrergasse 30, 8010 Graz, Austria}
\email{behrndt@tugraz.at}

\author[M. Langer]{Matthias Langer}
\address{Department of Mathematics and Statistics,
University of Strathclyde, 26 Richmond Street, Glasgow G1 1XH, United Kingdom}
\email{m.langer@strath.ac.uk}

\author[V. Lotoreichik]{Vladimir Lotoreichik}
\address{Technische Universit\"{a}t Graz,
Institut f\"{u}r Numerische Mathematik, 
Steyrergasse 30, 8010 Graz, Austria}
\email{lotoreichik@math.tugraz.at}

\subjclass[2010]{35P05, 35P20, 47F05, 47L20, 81Q10, 81Q15}

\keywords{Schr\"odinger operator, $\delta$ and $\delta'$-potential,
self-adjoint extension, Schatten--von~Neumann class,
wave operators, absolutely continuous spectrum}

\begin{abstract}
Self-adjoint Schr\"odinger operators
with $\delta$ and $\delta'$-potentials supported on a smooth compact hypersurface
are defined explicitly via boundary conditions. The spectral properties of these operators
are investigated, regularity results on the functions in their domains
are obtained, and analogues of the Birman--Schwinger principle and a variant of Krein's formula
are shown. Furthermore, Schatten--von Neumann type estimates for the differences of the powers of
the resolvents of the Schr\"odinger operators
with $\delta$ and $\delta'$-potentials,
and the Schr\"odinger operator without a singular interaction
are proved. An immediate consequence of these estimates is
the existence and completeness of the wave operators of the corresponding scattering systems, as well as the unitary equivalence
of the absolutely continuous parts of the singularly perturbed and unperturbed Schr\"odinger operators. In the proofs of 
our main theorems we make use of
abstract methods from extension theory of symmetric operators, some algebraic considerations and results on elliptic regularity.
\end{abstract}

\maketitle

\section{Introduction}

Schr\"odinger operators with $\delta$ and $\delta'$-potentials supported on
hypersurfaces play an important role in mathematical physics and
have attracted a lot of attention in the recent past; they
are used for the description of quantum particles
interacting with charged hypersurfaces.
In this introduction we first define the differential operators which are studied
in the present paper. Furthermore, we state and explain our main results on the
spectral and scattering properties of these operators in an easily understandable
but mathematically exact form in Theorems A--D below. Although the remaining part
of the paper can be viewed as a proof of these theorems we mention that
Sections~\ref{sec:3} and \ref{sec:4} contain not only slightly generalized versions of Theorems~A--D
but also other results which are of independent interest.

In the following let $\Sigma$ be a compact connected $C^\infty$-hypersurface which separates the
Euclidean space $\dR^n$ into a bounded domain $\Omega_{\rm i}$
and an unbounded domain $\Omega_{\rm e}$
with common boundary $\partial \Omega_{\rm e} = \partial\Omega_{\rm i} = \Sigma$.
Denote by $\delta_\Sigma$ the $\delta$-distribution  supported on $\Sigma$ and by
$\delta'_\Sigma$ its normal derivative in the distributional sense with the
normal pointing outwards of $\Omega_{\rm i}$.
The main objective of the present paper is to define and study
the spectral properties of Schr\"odinger operators associated with the formal differential expressions
\begin{equation}\label{expressions1}
\cL_{\delta,\alpha} \defeq -\Delta + V - \alpha\, \bigl\langle \delta_\Sigma,\cdot\,\bigr\rangle\,\delta_\Sigma
\quad\text{and}\quad
\cL_{\delta^\prime\!,\beta} \defeq -\Delta + V - \beta\,\bigl \langle\delta_\Sigma',\cdot\,\bigr\rangle\,\delta_\Sigma'.
\end{equation}
Here $V\in L^\infty(\dR^n)$ is assumed to be a real-valued potential and $\alpha,\beta:\Sigma\rightarrow\dR$
are real-valued measurable functions, often called strengths of interactions in mathematical physics.
In order to define the Schr\"odinger operators with $\delta$ and $\delta^\prime$-interactions
rigorously, it is necessary to specify suitable domains in $L^2(\dR^n)$ which take into account the
$\delta$ and $\delta^\prime$-interaction on the hypersurface $\Sigma$. In our approach this
will be done explicitly via suitable interface conditions on $\Sigma$ for a certain
function space in $L^2(\dR^n)$. One of the main advantages of our method
compared with the usual approach via semi-bounded closed sesquilinear forms (see, e.g.\ \cite{BEKS94,E08})
is that $\delta^\prime$-interactions can be treated without any additional difficulties.

Throughout the paper we write the functions $f\in L^2(\dR^n)$
in the form $f=f_{\rm i}\oplus f_{\rm e}$ with respect to the corresponding space
decomposition $L^2(\Omega_{\rm i})\oplus L^2(\Omega_{\rm e})$. For the definition of Schr\"odinger
operators with $\delta$ or $\delta'$-potentials we introduce the following subspaces
\begin{align*}
  H^{3/2}_\Delta(\Omega_{\rm i}) \defequ \bigl\{f_{\rm i}\in H^{3/2}(\Omega_{\rm i})\colon
  \Delta f_{\rm i}\in L^2(\Omega_{\rm i})\bigr\},\\[1ex]
  H^{3/2}_\Delta(\Omega_{\rm e}) \defequ \bigl\{f_{\rm e}\in H^{3/2}(\Omega_{\rm e})\colon
  \Delta f_{\rm e}\in L^2(\Omega_{\rm e})\bigr\},
\end{align*}
of the Sobolev spaces $H^{3/2}(\Omega_{\rm i})$ and $H^{3/2}(\Omega_{\rm e})$, respectively,
and their orthogonal sum in $L^2(\dR^n)$:
\begin{equation*}
  H^{3/2}_\Delta(\dR^n\backslash\Sigma) \defeq H^{3/2}_\Delta(\Omega_{\rm i})
  \oplus H^{3/2}_\Delta(\Omega_{\rm e});
\end{equation*}
cf. \cite{AF03,LM72} and Sections~\ref{sec:fspaces} and \ref{sec:locspaces} for more details.
The trace of a function $f_{\rm i}\in H^{3/2}_\Delta(\Omega_{\rm i})$ and the trace of the normal derivative $\partial_{\nu_{\rm i}}f_{\rm i}$
(with the normal $\nu_{\rm i}$ pointing outwards) are denoted by $f_{\rm i}|_\Sigma$ and
$\partial_{\nu_{\rm i}}f_{\rm i}\vert_\Sigma$, respectively. Similarly, for the exterior domain and $f_{\rm e}\in H^{3/2}_\Delta(\Omega_{\rm e})$
we write $f_{\rm e}|_\Sigma$ and $\partial_{\nu_{\rm e}}f_{\rm e}|_\Sigma$; here $\nu_{\rm e}$ and $\nu_{\rm i}$ are
pointing in opposite directions.

The main objects we study in this paper are the operators given in the following definition,
which are associated with the formal differential expressions in \eqref{expressions1}.

\begin{dfn*}
{\it Let $\alpha\in L^\infty(\Sigma)$ be a real-valued function on $\Sigma$.
The Schr\"odinger operator $A_{\delta,\alpha}$
corresponding to the $\delta$-interaction with strength $\alpha$ on $\Sigma$ is defined as
\begin{equation*}
\begin{split}
  A_{\delta,\alpha}f \defequ -\Delta f+Vf, \\
  \dom A_{\delta,\alpha} \defequ \Biggl\{f\in H^{3/2}_\Delta(\dR^n\backslash\Sigma):\begin{matrix}
  f_{\rm i}\vert_\Sigma=f_{\rm e}|_\Sigma\\ 
\alpha f_{\rm i}|_\Sigma=
  \partial_{\nu_{\rm e}}f_{\rm e}|_\Sigma+\partial_{\nu_{\rm i}}f_{\rm i}|_\Sigma\end{matrix} \Biggr\}.
\end{split}
\end{equation*}
Let $\beta$ be a real-valued function on $\Sigma$ such that $1/\beta\in L^\infty(\Sigma)$.
The Schr\"odinger operator $A_{\delta^\prime\!,\beta}$
corresponding to the $\delta^\prime$-interaction with strength $\beta$ on $\Sigma$ is defined as
\begin{equation*}
\begin{split}
  A_{\delta^\prime\!,\beta}f \defequ -\Delta f+ Vf, \\
  \dom A_{\delta^\prime\!,\beta} \defequ \Biggl\{f\in H^{3/2}_\Delta(\dR^n\backslash\Sigma):\begin{matrix}
  \partial_{\nu_e}f_{\rm e}\vert_\Sigma=-\partial_{\nu_{\rm i}}f_{\rm i}\vert_\Sigma\\
  \beta \partial_{\nu_{\rm e}}f_{\rm e}|_\Sigma=f_{\rm e}|_\Sigma-f_{\rm i}|_\Sigma\end{matrix}  \Biggr\}.
\end{split}
\end{equation*}
}
\end{dfn*}

\medskip

The boundary conditions in the domains of $A_{\delta,\alpha}$
and $A_{\delta^\prime\!,\beta}$ fit with the formal differential expressions
in \eqref{expressions1}. In order to see this for $A_{\delta,\alpha}$ we
introduce the closed symmetric form
\begin{equation*}
  \mathfrak a_{\delta,\alpha}[f,g]=(\nabla f,\nabla
g)_{L^2(\dR^n;\dC^n)}+(V f,g)_{L^2(\dR^n)}
  -(\alpha f\vert_\Sigma,g\vert_\Sigma)_{L^2(\Sigma)}
\end{equation*}
on $H^1(\dR^n)$. Further making use of
the boundary conditions $f_{\rm i}\vert_\Sigma=f_{\rm e}|_\Sigma$
and $\alpha f_{\rm i}|_\Sigma=\partial_{\nu_{\rm e}}f_{\rm
e}|_\Sigma+\partial_{\nu_{\rm i}}f_{\rm i}|_\Sigma$
for $f\in\dom A_{\delta,\alpha}$ and the first Green's identity a simple
calculation yields
\begin{equation*}
  (A_{\delta,\alpha} f,g)_{L^2(\dR^n)}=\mathfrak
a_{\delta,\alpha}[f,g]=\bigl\langle\cL_{\delta,\alpha}f,g\bigr\rangle
\end{equation*}
for all $g\in H^1(\dR^n)$. This also shows that $A_{\delta,\alpha}$
coincides with the self-adjoint operator associated
with the closed symmetric form $\mathfrak a_{\delta,\alpha}$; cf.\
Proposition~\ref{prop:exner} for more details.
The quadratic form method has been used
in various papers for the definition of Schr\"odinger operators with
$\delta$-perturbations supported on curves and hypersurfaces. We refer
the reader to \cite{BEKS94}
and the review paper \cite{E08} for more details and further references;
we also mention \cite{BEW09,Ex3,Ex2,EF09,EI01,EK02,EK03,EY02a,KV07} for studies
of eigenvalues,
\cite{BSS00,EF07,EY01,SS01} for results on the absolutely continuous spectrum,
and \cite{AGS87,EK05,EN03,EY02b,EY04,H89,LLP10, S88} for related problems
for Schr\"odinger operators with $\delta$-perturbations.
We point out that the quadratic form approach could not be adapted to
the $\delta^\prime$-case so far; see the open problem posed in \cite[7.2]{E08} and our
solution in Proposition~\ref{prop:deltapform}.
For completeness we also mention that the above definitions of the
differential
operators $A_{\delta,\alpha}$ and $A_{\delta^\prime\!,\beta}$ are compatible
with the ones for one-dimensional $\delta$ and $\delta'$-point interactions
in \cite{AGHH05,AK99}.

In the next theorem, which is the first main result of this paper,
we obtain some basic properties of the Schr\"odinger operators $A_{\delta,\alpha}$ and
$A_{\delta^\prime\!,\beta}$.  Here also the \emph{free} or \emph{unperturbed} Schr\"odinger operator
\begin{equation*}
  A_{\rm free}f =-\Delta f + V f,\qquad \dom A_{\rm free}=H^2(\dR^n),
\end{equation*}
is used. It is well known and easy to see that $A_{\rm free}$
is semi-bounded and self-adjoint in $L^2(\dR^n)$. Recall that the essential spectrum $\sess(A)$
of a self-adjoint operator $A$ consists of all spectral points that are not isolated eigenvalues
of finite multiplicity. The statements in Theorem~A below are contained in
Theorems~\ref{thm:delta}, \ref{thm:delta'}, \ref{thm:smb} and \ref{thm:smb2} in
Sections \ref{sec:delta}--\ref{sec:finiteness}.

\begin{thmA}\label{main1}
The Schr\"odinger operators $A_{\delta,\alpha}$ and $A_{\delta^\prime\!,\beta}$
are self-adjoint operators in $L^2(\dR^n)$, which are bounded from below, and their
essential spectra satisfy
\[
  \sigma_{\rm ess}(A_{\delta,\alpha}) = \sigma_{\rm ess}(A_{\delta^\prime\!,\beta})
  = \sigma_{\rm ess}(A_{\rm free}).
\]
If $V \equiv 0$, then $\sess(A_{\delta,\alpha}) = \sess(A_{\delta'\!,\beta}) = [0,\infty)$
and the negative spectra of the self-adjoint operators $A_{\delta,\alpha}$
and $A_{\delta'\!,\beta}$ consist of finitely many negative eigenvalues with finite multiplicities.
\end{thmA}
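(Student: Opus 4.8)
The plan is to reduce each assertion of Theorem~A to one of three ingredients, all developed in the sections cited above: the quadratic-form description of $A_{\delta,\alpha}$ and $A_{\delta'\!,\beta}$, a Krein-type resolvent formula, and a Birman--Schwinger principle.

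\emph{Self-adjointness and semiboundedness.} For $A_{\delta,\alpha}$ I would work with the form $\mathfrak a_{\delta,\alpha}$ on $H^1(\dR^n)$ introduced before the theorem. The decisive point is that the interaction term $(\alpha f|_\Sigma,g|_\Sigma)_{L^2(\Sigma)}$ is \emph{infinitesimally} form-bounded with respect to $\|\nabla\cdot\|^2_{L^2(\dR^n)}+\|\cdot\|^2_{L^2(\dR^n)}$: this follows from $\alpha\in L^\infty(\Sigma)$ together with the Ehrling-type trace estimate $\|f|_\Sigma\|^2_{L^2(\Sigma)}\le\eps\|\nabla f\|^2_{L^2(\dR^n)}+C_\eps\|f\|^2_{L^2(\dR^n)}$, which rests on compactness of the trace map $H^1(\dR^n)\to L^2(\Sigma)$ (here $\Sigma$ is compact). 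Hence $\mathfrak a_{\delta,\alpha}$ is closed and semibounded, and the first representation theorem produces a self-adjoint semibounded operator; that it coincides with $A_{\delta,\alpha}$ as defined via the interface conditions is the Green's-identity computation sketched in the introduction, once elliptic (transmission) regularity shows that functions in the operator domain actually lie in $H^{3/2}_\Delta(\dR^n\setminus\Sigma)$. For $A_{\delta'\!,\beta}$ the same scheme applies using the form from Proposition~\ref{prop:deltapform}, which now lives on $H^1(\Omega_{\rm i})\oplus H^1(\Omega_{\rm e})$ and whose singular term is $\int_\Sigma\tfrac1\beta\,(f_{\rm e}|_\Sigma-f_{\rm i}|_\Sigma)\overline{(g_{\rm e}|_\Sigma-g_{\rm i}|_\Sigma)}$; here $1/\beta\in L^\infty(\Sigma)$ plays the role of $\alpha$, and the same compact-trace argument again yields infinitesimal form-boundedness. (Alternatively, and this is the route of Theorems~\ref{thm:delta} and \ref{thm:delta'}, both operators are realized as boundary-condition restrictions of $S^{*}$ inside a quasi-boundary triple and self-adjointness is read off from an abstract criterion.)

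\emph{Essential spectrum.} I would show that $(A_{\delta,\alpha}-\lambda)^{-1}-(A_{\rm free}-\lambda)^{-1}$ and $(A_{\delta'\!,\beta}-\lambda)^{-1}-(A_{\rm free}-\lambda)^{-1}$ are compact for one (hence every) $\lambda$ in the common resolvent set; the equalities $\sess(A_{\delta,\alpha})=\sess(A_{\delta'\!,\beta})=\sess(A_{\rm free})$ then follow from stability of the essential spectrum under compact perturbations of the resolvent. Compactness comes from the variant of Krein's formula proved later: the resolvent difference factors as $\gamma(\lambda)\bigl(\Theta-M(\lambda)\bigr)^{-1}\gamma(\bar\lambda)^{*}$ with bounded middle factor, while $\gamma(\bar\lambda)^{*}\colon L^2(\dR^n)\to L^2(\Sigma)$ is the composition of the smoothing map $(A_{\rm free}-\bar\lambda)^{-1}\colon L^2(\dR^n)\to H^2(\dR^n)$ with a trace operator $H^2(\dR^n)\to H^{3/2}(\Sigma)$ (trace of the function in the $\delta$-case, of its normal derivative in the $\delta'$-case) followed by the compact embedding $H^{3/2}(\Sigma)\hookrightarrow L^2(\Sigma)$; a product with a compact factor is compact.

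\emph{The case $V\equiv0$, and the main obstacle.} Then $A_{\rm free}=-\Delta$ has $\sigma(A_{\rm free})=\sess(A_{\rm free})=[0,\infty)$, so the previous step gives $\sess(A_{\delta,\alpha})=\sess(A_{\delta'\!,\beta})=[0,\infty)$; since both operators are semibounded, their spectrum in $(-\infty,0)$ is disjoint from the essential spectrum and hence consists of isolated eigenvalues of finite multiplicity, which a priori could accumulate at $0$. To exclude this I would invoke the Birman--Schwinger principle established later: for $\kappa>0$ the number of eigenvalues of $A_{\delta,\alpha}$ below $-\kappa^2$ equals the number of eigenvalues larger than $1$ of a compact operator $B_{\delta,\alpha}(\kappa)$ on $L^2(\Sigma)$ built from $\alpha$ and the single-layer operator of $-\Delta+\kappa^2$ restricted to $\Sigma$ — compact since its kernel is weakly singular of order $n-2<n-1=\dim\Sigma$ — while for $A_{\delta'\!,\beta}$ the corresponding operator is of the form $W_\kappa^{-1/2}\,(\tfrac1\beta)\,W_\kappa^{-1/2}$ with $W_\kappa$ the (positive, order~$1$) hypersingular operator of $-\Delta+\kappa^2$, compact because $W_\kappa^{-1/2}$ maps $L^2(\Sigma)$ into $H^{1/2}(\Sigma)\hookrightarrow\hookrightarrow L^2(\Sigma)$. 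Letting $\kappa\downarrow0$ these counting numbers increase to the number of negative eigenvalues, and this stays finite because the Birman--Schwinger operators converge in norm to a compact limit (for $n\ge3$; for $n=2$ a single logarithmically divergent rank-one term appears and can change the count by at most one). The genuinely difficult parts of the whole argument are the elliptic-regularity and extension-theoretic facts underlying the first two ingredients — in particular identifying the operator domains with the explicit $H^{3/2}_\Delta$-classes subject to the interface conditions, and, for the $\delta'$-operator, making the quadratic form meaningful at all, which resolves the open problem in \cite[7.2]{E08} — together with the uniform control of the Birman--Schwinger counting function as $\kappa\downarrow0$, where the two-dimensional case needs separate treatment because the free resolvent kernel degenerates as $\kappa\to0$.
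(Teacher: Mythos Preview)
Your treatment of self-adjointness, semiboundedness, and the essential spectrum is essentially correct and close to the paper's: self-adjointness is indeed obtained either via the closed semibounded forms (Propositions~\ref{prop:exner} and~\ref{prop:deltapform}) or, as the paper does primarily, via the abstract quasi boundary triple criterion in Theorem~\ref{thm:krein} (compactness of the Weyl function); and the equality of essential spectra comes, exactly as you say, from compactness of the resolvent difference via Krein's formula.

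Where your route departs from the paper is in the \emph{finiteness of the negative spectrum} for $V\equiv0$. You propose a Birman--Schwinger counting argument with a limit $\kappa\downarrow0$, which you yourself flag as the delicate step (norm convergence of the BS operators, separate $n=2$ analysis, etc.). The paper avoids this entirely. In the proof of Theorem~\ref{thm:smb}\,(ii) one computes the quadratic forms $\frs_{-\Delta_{\delta,\alpha}}$ and $\frs_{-\Delta_{\delta'\!,\beta}}$ explicitly via Green's identity and then compares them with the auxiliary form
\[
  \frq_\sigma[f]=\|\nabla f\|^2-(\sigma f_{\rm i}|_\Sigma,f_{\rm i}|_\Sigma)_\Sigma-(\sigma f_{\rm e}|_\Sigma,f_{\rm e}|_\Sigma)_\Sigma,\qquad \dom\frq_\sigma=H^1(\dR^n\backslash\Sigma),
\]
choosing $\sigma=|\alpha|/2$ in the $\delta$-case and $\sigma=2/|\beta|$ in the $\delta'$-case. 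The pointwise inequalities $\frs_{-\Delta_{\delta,\alpha}}\ge\frq_{|\alpha|/2}$ and $\frs_{-\Delta_{\delta'\!,\beta}}\ge\frq_{2/|\beta|}$ then give $\kappa_-(\frs)\le\kappa_-(\frq_\sigma)$, and finiteness of $\kappa_-(\frq_\sigma)$ is quoted from Birman's classical paper \cite[Theorem~6.9]{B62}. This bypasses the $\kappa\downarrow0$ limit, works uniformly in all dimensions $n\ge2$, and---importantly---handles sign-indefinite $\alpha$ and $1/\beta$ without modification, whereas the standard Birman--Schwinger \emph{counting} identity you invoke typically requires a sign condition on the coupling (for indefinite $\alpha$ one would need a more elaborate variant). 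Your approach can be made to work, but the form-comparison argument is both shorter and more robust here.
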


It is not surprising that additional smoothness assumptions on the functions $\alpha$ and $\beta$ in the boundary condition yield more regularity for the functions
in $\dom A_{\delta,\alpha}$ and $\dom A_{\delta^\prime\!,\beta}$.
The $H^2$-case is of particular importance; see also \cite{BK08}
where the Laplacian on a strip was considered.
The next theorem follows from Theorems~\ref{thm:H2delta} and \ref{thm:H2delta'}.
The Sobolev space of order one of $L^\infty$-functions on $\Sigma$ is denoted by
$W^{1,\infty}(\Sigma)$.

\begin{thmB}
If $\alpha\in W^{1,\infty}(\Sigma)$, then $\dom A_{\delta,\alpha}$ is
contained in $H^2(\Omega_{\rm i})\oplus H^2(\Omega_{\rm e})$.
If $1/\beta\in W^{1,\infty}(\Sigma)$, then $\dom A_{\delta^\prime\!,\beta}$ is
contained in $H^2(\Omega_{\rm i})\oplus H^2(\Omega_{\rm e})$.
\end{thmB}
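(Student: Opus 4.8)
The plan is to reduce the $H^2$-regularity of functions in $\dom A_{\delta,\alpha}$ and $\dom A_{\delta'\!,\beta}$ to an interior elliptic-regularity statement on each of the two domains $\Omega_{\rm i}$ and $\Omega_{\rm e}$ separately, where the coupling enters only through the boundary data on $\Sigma$. Fix $f = f_{\rm i}\oplus f_{\rm e}\in\dom A_{\delta,\alpha}$. By definition $f_{\rm i}\in H^{3/2}_\Delta(\Omega_{\rm i})$ and $f_{\rm e}\in H^{3/2}_\Delta(\Omega_{\rm e})$, so $-\Delta f_{\rm i} + V f_{\rm i}\in L^2(\Omega_{\rm i})$ and likewise in $\Omega_{\rm e}$; in particular $\Delta f_{\rm i}\in L^2(\Omega_{\rm i})$. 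If we knew that $f_{\rm i}|_\Sigma\in H^{3/2}(\Sigma)$, then $f_{\rm i}$ would solve the elliptic boundary value problem $-\Delta f_{\rm i}=g_{\rm i}\in L^2(\Omega_{\rm i})$ with Dirichlet datum in $H^{3/2}(\Sigma)$ on the smooth compact boundary $\Sigma$, and standard elliptic regularity (see, e.g., \cite{LM72}) would give $f_{\rm i}\in H^2(\Omega_{\rm i})$; the same argument applies to $f_{\rm e}$ on the exterior domain, using that $\Omega_{\rm e}$ has smooth compact boundary and that we already know $f_{\rm e}\in H^{3/2}(\Omega_{\rm e})$ so no issue arises at infinity. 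Thus the whole theorem comes down to showing that the trace $f_{\rm i}|_\Sigma = f_{\rm e}|_\Sigma$ lies in $H^{3/2}(\Sigma)$ (for the $\delta$-case) and, in the $\delta'$-case, that the Neumann trace $\partial_{\nu_{\rm e}}f_{\rm e}|_\Sigma = -\partial_{\nu_{\rm i}}f_{\rm i}|_\Sigma$ lies in $H^{1/2}(\Sigma)$.

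The mechanism for the gain is a bootstrap using the boundary conditions together with the mapping properties of the Dirichlet-to-Neumann map on $\Sigma$. Start from what the membership in $H^{3/2}_\Delta$ already gives by the trace theorem: $f_{\rm i}|_\Sigma\in H^1(\Sigma)$ and $\partial_{\nu_{\rm i}}f_{\rm i}|_\Sigma\in H^0(\Sigma) = L^2(\Sigma)$, and similarly for the exterior traces. In the $\delta$-case, the boundary condition reads $\partial_{\nu_{\rm i}}f_{\rm i}|_\Sigma + \partial_{\nu_{\rm e}}f_{\rm e}|_\Sigma = \alpha f_{\rm i}|_\Sigma$. Since $\alpha\in W^{1,\infty}(\Sigma)$ and $f_{\rm i}|_\Sigma\in H^1(\Sigma)$, the right-hand side lies in $H^1(\Sigma)$; multiplication by a $W^{1,\infty}$ function maps $H^1(\Sigma)\to H^1(\Sigma)$, which is precisely where the smoothness hypothesis on $\alpha$ is used. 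On the other hand, the interior and exterior Neumann data are each, modulo the part coming from $g_{\rm i}$, $g_{\rm e}\in L^2$, given by the action of the (resolvent-shifted) Dirichlet-to-Neumann maps $\Lambda_{\rm i}$, $\Lambda_{\rm e}$ on the common Dirichlet trace $\varphi\defeq f_{\rm i}|_\Sigma=f_{\rm e}|_\Sigma$. The operator $\Lambda_{\rm i}+\Lambda_{\rm e}$ is an elliptic classical pseudodifferential operator of order one on the compact manifold $\Sigma$ which is (after the spectral shift) boundedly invertible with inverse of order $-1$; this is the content of the variant of Krein's formula and the Birman--Schwinger-type analysis developed in the earlier sections, so I may invoke it. Hence $\varphi = (\Lambda_{\rm i}+\Lambda_{\rm e})^{-1}\bigl(\alpha\varphi + (\text{contribution of }g_{\rm i},g_{\rm e})\bigr)$, and since the bracket lies in $H^1(\Sigma)$ while $(\Lambda_{\rm i}+\Lambda_{\rm e})^{-1}$ gains one order, we obtain $\varphi\in H^{3/2}(\Sigma)$, as required. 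The $\delta'$-case is handled symmetrically with the roles of Dirichlet and Neumann data interchanged: one uses $1/\beta\in W^{1,\infty}(\Sigma)$, the boundary conditions $\partial_{\nu_{\rm e}}f_{\rm e}|_\Sigma = -\partial_{\nu_{\rm i}}f_{\rm i}|_\Sigma$ and $\beta\partial_{\nu_{\rm e}}f_{\rm e}|_\Sigma = f_{\rm e}|_\Sigma - f_{\rm i}|_\Sigma$, and the Neumann-to-Dirichlet map, which gains one order, to upgrade the common Neumann trace from $L^2(\Sigma)$ to $H^{1/2}(\Sigma)$; feeding this back into the elliptic boundary value problem with Neumann data in $H^{1/2}(\Sigma)$ yields $f_{\rm i}, f_{\rm e}\in H^2$.

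The main obstacle, and the step I would be most careful about, is making the bootstrap genuinely rigorous rather than formal: one must check that the terms coming from the inhomogeneity $-\Delta f_{\rm i} = g_{\rm i}\in L^2(\Omega_{\rm i})$ do not obstruct the gain. Concretely, one writes $f_{\rm i} = u_{\rm i} + h_{\rm i}$ where $u_{\rm i}$ solves $-\Delta u_{\rm i} + u_{\rm i} = g_{\rm i}$ (say) with zero Dirichlet data — so $u_{\rm i}\in H^2(\Omega_{\rm i})$ by interior regularity and contributes a Neumann trace in $H^{1/2}(\Sigma)\subset L^2(\Sigma)$ — and $h_{\rm i}$ is the harmonic-type extension of $\varphi$; only the latter is controlled by the Dirichlet-to-Neumann calculus. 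The boundary condition then becomes an identity for $\varphi$ in which every term other than $(\Lambda_{\rm i}+\Lambda_{\rm e})\varphi$ and $\alpha\varphi$ already lies in $H^{1/2}(\Sigma)$, and one must verify that the regularity of $\alpha\varphi$ (only $H^1$, hence better than $H^{1/2}$) combined with the known $H^1$-regularity of $\varphi$ is self-consistent and closes the loop to give $\varphi\in H^{3/2}(\Sigma)$. A secondary technical point is the behaviour in the unbounded exterior domain $\Omega_{\rm e}$: there one should localise near $\Sigma$ with a smooth cutoff before applying the elliptic estimate, using that $f_{\rm e}\in H^{3/2}(\Omega_{\rm e})$ globally so that the commutator terms are harmless, which is routine once the boundary traces are known.
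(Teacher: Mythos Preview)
Your approach is correct in outline but takes a genuinely different route from the paper's. You aim to show that the \emph{Dirichlet} trace $\varphi=f|_\Sigma$ lies in $H^{3/2}(\Sigma)$ by a bootstrap through the sum of Dirichlet-to-Neumann maps, and then invoke elliptic regularity for the Dirichlet problem on each side. The paper instead works with the \emph{jump of the normal derivative} $\wt\Gamma_0 f=\partial_{\nu_{\rm i}}f_{\rm i}|_\Sigma+\partial_{\nu_{\rm e}}f_{\rm e}|_\Sigma$, which by the $\delta$-boundary condition equals $\alpha f|_\Sigma$ and therefore lies in $H^1(\Sigma)\subset H^{1/2}(\Sigma)$ \emph{directly}, with no bootstrap needed. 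One then writes $f=f_{\rm free}+f_\lambda$ with $f_{\rm free}\in\dom A_{\rm free}=H^2(\dR^n)$ and $f_\lambda\in\ker(\wt T-\lambda)$; since $\wt\Gamma_0 f_{\rm free}=0$ one has $\wt\Gamma_0 f_\lambda\in H^{1/2}(\Sigma)$, and the standard surjectivity of the trace map from $H^2(\dR^n\backslash\Sigma)$ shows that $\wt\Gamma_0$ restricted to $\ker(\wt T-\lambda)\cap H^2(\dR^n\backslash\Sigma)$ is a bijection onto $H^{1/2}(\Sigma)$, whence $f_\lambda\in H^2(\dR^n\backslash\Sigma)$. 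The $\delta'$-case is handled symmetrically with $A_{\rm N,i,e}$ in place of $A_{\rm free}$.

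The paper's argument is shorter and entirely self-contained: it uses only the quasi boundary triple decomposition and the classical trace theorem for $H^2$, avoiding pseudodifferential calculus altogether. Your route, by contrast, hinges on the fact that $(\Lambda_{\rm i}+\Lambda_{\rm e})^{-1}=\wt M(\lambda)$ maps $H^{1/2}(\Sigma)$ into $H^{3/2}(\Sigma)$; this is true (it is a classical $\Psi$DO of order $-1$ with elliptic principal symbol), but the paper only establishes the mapping property $L^2(\Sigma)\to H^1(\Sigma)$, so you would need to import the $\Psi$DO structure from an outside source rather than ``the earlier sections''. What your approach buys is a more classical picture tied to boundary integral operators and elliptic regularity on each domain separately; what the paper's approach buys is economy and the observation that the relevant boundary quantity ($\wt\Gamma_0 f$ rather than $\wt\Gamma_1 f$) already has the required regularity without any inversion.
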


The fact that the essential spectra of the operators $A_{\delta,\alpha}$, $A_{\delta^\prime\!,\beta}$
and $A_{\rm free}$ in Theorem~A coincide, follows from the observation that the resolvent
differences of these operators are compact. Roughly speaking this is a consequence of the compactness
of the hypersurface $\Sigma$ and Sobolev embedding theorems. However, as can be expected from
the classical results in \cite{B62} (see also \cite{BLLLP10,BS79,BS80, DS75,G84,G11,G11-2,M10}) more specific
considerations yield more precise Schatten--von Neumann type estimates
for the differences of the resolvents and their integer powers, which then in turn imply existence and
completeness of the wave operators of the scattering pairs $\{A_{\delta,\alpha},A_{\rm free}\}$
and $\{A_{\delta'\!,\beta},A_{\rm free}\}$; see, e.g.\ \cite{K95,RS79-III, Y92} for more
details and consequences.

Recall that a compact operator $T$ is said to belong to the weak Schatten--von Neumann ideal $\sS_{p,\infty}$ if
the sequence of singular values $s_k$, i.e.\ the sequence of eigenvalues of the non-negative
operator $(T^*T)^{1/2}$, satisfies $s_k=O(k^{-1/p})$, $k\rightarrow\infty$. 
Note that $\sS_{p,\infty}\subset\sS_{p^\prime}$ for all $p^\prime>p$, where $\sS_{p^\prime}$ is the usual Schatten--von Neumann
ideal; cf. Section~\ref{sec:SvN}.

\begin{thmC}
For the self-adjoint Schr\"odinger operators $A_{\delta,\alpha}$ and $A_{\delta'\!,\beta}$
in $L^2(\dR^n)$ the following statements hold.
\begin{itemize}\setlength{\itemsep}{1.2ex}
\item [{\rm (i)}]
For all $\lambda\in\rho(A_{\delta,\alpha})\cap\rho(A_{\rm free})$ we have
\begin{equation*}
  (A_{\delta,\alpha}-\lambda)^{-1} - (A_{\rm free}-\lambda)^{-1} \in \sS_{\frac{n-1}{3},\infty}
\end{equation*}
and, in particular, the wave operators for the pair $\{A_{\delta,\alpha},A_{\rm free}\}$
exist and are complete when $n=2$ or $n=3$.
\item [{\rm (ii)}]
For all $\lambda\in\rho(A_{\delta^\prime\!,\beta})\cap\rho(A_{\rm free})$ we have
\begin{equation*}
  (A_{\delta^\prime\!,\beta}-\lambda)^{-1} - (A_{\rm free}-\lambda)^{-1} \in \sS_{\frac{n-1}{2},\infty},
\end{equation*}
and, in particular, the wave operators for the pair $\{A_{\delta^\prime\!,\beta},A_{\rm free}\}$
exist and are complete when $n=2$.
\end{itemize}
\end{thmC}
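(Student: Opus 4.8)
The plan is to reduce everything to a Birman--Schwinger / Krein's formula representation of the resolvent differences and then to estimate the relevant factors in the appropriate weak Schatten--von Neumann ideals $\sS_{p,\infty}$. The starting point is a Krein-type formula for $(A_{\delta,\alpha}-\lambda)^{-1} - (A_{\rm free}-\lambda)^{-1}$ and for $(A_{\delta^\prime\!,\beta}-\lambda)^{-1} - (A_{\rm free}-\lambda)^{-1}$. Writing $\gamma(\lambda)$ for the (Poisson-type) operator that solves $-\Delta u + Vu = \lambda u$ on $\dR^n\setminus\Sigma$ with prescribed jump data on $\Sigma$, and $M(\lambda)$ for the corresponding Weyl function (a sum of interior and exterior Dirichlet-to-Neumann--type maps), the resolvent difference factorizes schematically as
\begin{equation*}
  \gamma(\lambda)\,\bigl(\Theta - M(\lambda)\bigr)^{-1}\,\gamma(\bar\lambda)^*,
\end{equation*}
where $\Theta$ encodes the interaction strength ($\Theta$ is multiplication by $\alpha$ in the $\delta$-case, and by $1/\beta$ in the $\delta^\prime$-case, up to signs). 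I would first establish such a formula — this is the analogue of the Birman--Schwinger principle and Krein's formula advertised in the abstract, and I may assume the corresponding statements proved earlier in Sections~\ref{sec:3}--\ref{sec:4}. The middle factor $(\Theta-M(\lambda))^{-1}$ is a bounded operator on $L^2(\Sigma)$ for $\lambda$ in the resolvent set, so it does not affect the Schatten class; everything is governed by the mapping properties of $\gamma(\lambda)$, equivalently by those of $\gamma(\bar\lambda)^*$, between $L^2(\Sigma)$ and $L^2(\dR^n)$.

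The key analytic input is a sharp smoothing estimate: the map $\gamma(\lambda)\colon L^2(\Sigma)\to L^2(\dR^n)$ gains $3/2$ derivatives in the $\delta$-case and $1/2$ derivative in the $\delta^\prime$-case, reflecting that in the $\delta$-case one prescribes a Neumann-type jump (one order smoother than in the $\delta^\prime$-case, where one prescribes a Dirichlet-type jump). Combined with $\gamma(\bar\lambda)^*$ this yields, for the $\delta$-case, an operator that factors through the embedding $H^s(\Sigma)\hookrightarrow L^2(\Sigma)$ with total gain $3$ derivatives on the $(n-1)$-dimensional compact manifold $\Sigma$, and for the $\delta^\prime$-case with total gain $2$ derivatives. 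The classical Weyl-type asymptotics for eigenvalues of such embeddings (the compact hypersurface has dimension $n-1$, so $H^s(\Sigma)\hookrightarrow L^2(\Sigma)$ lies in $\sS_{(n-1)/s,\infty}$) then give exactly $\sS_{(n-1)/3,\infty}$ and $\sS_{(n-1)/2,\infty}$, respectively. The precise bookkeeping is: write the resolvent difference as a product $R_1 \cdot R_2$ with $R_1\in\sS_{2(n-1)/3,\infty}$ and $R_2\in\sS_{2(n-1)/3,\infty}$ (bounded middle factor absorbed), and use the Hölder-type inequality $\sS_{p_1,\infty}\cdot\sS_{p_2,\infty}\subset\sS_{p,\infty}$ with $1/p = 1/p_1 + 1/p_2$; analogously for $\delta^\prime$ with $\sS_{n-1,\infty}\cdot\sS_{n-1,\infty}$. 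The consequence for the wave operators follows from the Birman--Kato theory: if the resolvent difference is trace class (which happens precisely when $(n-1)/3<1$, i.e.\ $n=2,3$, for the $\delta$-case, and $(n-1)/2<1$, i.e.\ $n=2$, for the $\delta^\prime$-case), then the wave operators for the pair exist and are complete, and the absolutely continuous parts are unitarily equivalent.

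The main obstacle I anticipate is proving the sharp smoothing/mapping estimate for $\gamma(\lambda)$ with the correct number of derivatives, and in particular making sure one does not lose fractional regularity when passing between the interior and exterior problems and across the interface $\Sigma$. Concretely, one must show that the $\gamma$-field maps $L^2(\Sigma)$ into $H^{3/2}_\Delta(\dR^n\setminus\Sigma)$ (respectively the appropriate space for the $\delta^\prime$-case) with the right trace regularity, which requires elliptic regularity up to the boundary for the Dirichlet and Neumann problems on $\Omega_{\rm i}$ and $\Omega_{\rm e}$ together with precise control of the Dirichlet-to-Neumann maps as operators between Sobolev scales on $\Sigma$; the non-compactness of $\Omega_{\rm e}$ and the behaviour of the resolvent near the essential spectrum must be handled by choosing $\lambda$ off the spectrum and exploiting that the difference is localized near $\Sigma$. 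A secondary technical point is that the $\sS_{p,\infty}$ ideals are only quasi-normed, so the Hölder inequality and the perturbation-of-singular-values arguments must be invoked in their $\sS_{p,\infty}$ form rather than the cleaner $\sS_p$ form; this is routine but needs care. Once the mapping estimate is in place, the rest is a matter of assembling the factorization and applying the stated abstract results.
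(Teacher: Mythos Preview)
Your plan for part~(i) is essentially the paper's proof: Krein's formula (Theorem~\ref{thm:delta}\,(ii)) gives
\[
  (A_{\delta,\alpha}-\lambda)^{-1}-(A_{\rm free}-\lambda)^{-1}
  =\wt\gamma(\lambda)\bigl(I-\alpha\wt M(\lambda)\bigr)^{-1}\alpha\,\wt\gamma(\bar\lambda)^*,
\]
the middle factor is bounded on $L^2(\Sigma)$, and each $\gamma$-factor lies in
$\sS_{\frac{n-1}{3/2},\infty}$ because $\ran\wt\gamma(\bar\lambda)^*\subset H^{3/2}(\Sigma)$
(Proposition~\ref{prop:prelim}\,(i)); the H\"older rule in $\sS_{p,\infty}$ then yields
$\sS_{\frac{n-1}{3},\infty}$, and Birman--Kato gives the wave operators for $n=2,3$.

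For part~(ii) your explanation of the mechanism is not correct, and the direct
scheme you propose does not go through as written. In the paper's quasi boundary
triple $\wh\Pi$ (Proposition~\ref{prop:qbt2}) the boundary map $\wh\Gamma_0$ is again
Neumann data, so the $\gamma$-field $\wh\gamma(\lambda)$ still gains $3/2$ derivatives,
not $1/2$; there is no ``$\delta'$-type $\gamma$-field'' in the paper that gains only
$1/2$. More importantly, Krein's formula for $A_{\delta'\!,\beta}$
(Theorem~\ref{thm:delta'}) compares with $A_{\rm N,i,e}$, \emph{not} with $A_{\rm free}$,
and actually yields the stronger estimate $\sS_{\frac{n-1}{3},\infty}$
(Theorem~\ref{thm:S_infty3}). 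The weaker class $\sS_{\frac{n-1}{2},\infty}$ enters only
through a \emph{second} resolvent comparison, namely $A_{\rm free}$ versus $A_{\rm N,i,e}$,
whose Krein-type formula reads
\[
  (A_{\rm free}-\lambda)^{-1}-(A_{\rm N,i,e}-\lambda)^{-1}
  =-\wh\gamma(\lambda)\,\wh M(\lambda)^{-1}\,\wh\gamma(\bar\lambda)^*.
\]
Here the middle factor $\wh M(\lambda)^{-1}$ is \emph{un}bounded on $L^2(\Sigma)$;
it is only bounded as a map $H^1(\Sigma)\to L^2(\Sigma)$. One therefore has to view
$\wh\gamma(\bar\lambda)^*$ as an operator into $H^1(\Sigma)$, where it lies merely in
$\sS_{\frac{n-1}{1/2},\infty}$ (Proposition~\ref{prop:prelim}\,(ii)(b)), and this is
the source of the lost derivative. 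Your assertion that ``the middle factor
$(\Theta-M(\lambda))^{-1}$ is a bounded operator on $L^2(\Sigma)$'' fails precisely at
this step; if you tried to flip the triple so that $A_{\rm free}$ becomes the reference
operator, the new Weyl function would be $-\wh M(\lambda)^{-1}$ and the new parameter
would involve $\beta$ (not $1/\beta$), neither of which is bounded on $L^2(\Sigma)$
under the standing hypotheses. The fix is exactly the two-step route the paper takes:
first $A_{\delta'\!,\beta}$ versus $A_{\rm N,i,e}$, then $A_{\rm N,i,e}$ versus
$A_{\rm free}$, and add.
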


The scattering theory for operators with $\delta$-potentials in the
two-dimen\-sional case is partially developed in \cite{EK05}.
In higher dimensions it is necessary to extend the estimates
to higher powers of resolvents as we do in the next main theorem
under an additional local regularity assumption on the potential $V$.
In particular, for sufficiently smooth $V$ this implies the existence
and completeness of the wave operators for the scattering
pairs $\{A_{\delta,\alpha},A_{\rm free}\}$ and $\{A_{\delta^\prime\!,\beta},A_{\rm free}\}$
in any space dimension. For $k\in\dN_0$ the subspace of $L^\infty(\dR^n)$
which consists of
all functions that admit partial derivatives in an open neighbourhood of the
hypersurface $\Sigma$ up to order $k$ in $L^\infty(\dR^n)$
is denoted by $W^{k,\infty}_{\Sigma}(\dR^n)$.

\begin{thmD}
Let the self-adjoint Schr\"odinger operators $A_{\delta,\alpha}$ and $A_{\delta'\!,\beta}$
be as above, and assume that $V\in W^{2m-2, \infty}_{\Sigma}(\dR^n)$ for some $m\in\dN$. Then the following statements hold.
\begin{itemize}\setlength{\itemsep}{1.2ex}
\item [{\rm (i)}]
For all $l=1,2,\dots,m$ and $\lambda\in\rho(A_{\delta,\alpha})\cap\rho(A_{\rm free})$ we have
\begin{equation*}
  (A_{\delta,\alpha}-\lambda)^{-l} - (A_{\rm free}-\lambda)^{-l}\in \sS_{\frac{n-1}{2l+1},\infty},
\end{equation*}
and, in particular, the wave operators for the pair $\{A_{\delta,\alpha},A_{\rm free}\}$
exist and are complete when $2m-2 > n-4$.
\item [{\rm (ii)}]
For all $l=1,2,\dots,m$ and $\lambda\in\rho(A_{\delta^\prime\!,\beta})\cap\rho(A_{\rm free})$ we have
\begin{equation*}
  (A_{\delta^\prime\!,\beta}-\lambda)^{-l} - (A_{\rm free}-\lambda)^{-l} \in \sS_{\frac{n-1}{2l},\infty},
\end{equation*}
and, in particular, the wave operators for the pair $\{A_{\delta^\prime\!,\beta},A_{\rm free}\}$
exist and are complete when $2m-2 > n-3$.
\end{itemize}
\end{thmD}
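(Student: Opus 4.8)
The plan is to reduce everything to a Krein-type resolvent formula expressing the difference $(A_{\delta,\alpha}-\lambda)^{-1}-(A_{\rm free}-\lambda)^{-1}$ (and the $\delta'$ analogue) as a product of boundary operators, and then to estimate the Schatten--von~Neumann class of each factor and of the higher powers by differentiating the formula. First I would establish, using the abstract extension theory and Birman--Schwinger-type machinery announced in Sections~\ref{sec:3}--\ref{sec:4}, a formula of the shape
\begin{equation*}
  (A_{\delta,\alpha}-\lambda)^{-1} - (A_{\rm free}-\lambda)^{-1}
  = \gamma(\lambda)\,\bigl(\Theta - M(\lambda)\bigr)^{-1}\gamma(\bar\lambda)^*,
\end{equation*}
where $\gamma(\lambda)\colon L^2(\Sigma)\to L^2(\dR^n)$ is the (Poisson-type) solution operator to $\Sigma$, $M(\lambda)$ is the associated Weyl function on $L^2(\Sigma)$, and $\Theta$ encodes the coupling constant $\alpha$; for the $\delta'$-case the analogous formula holds with a Weyl function built from Neumann data, which accounts for the different exponent. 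The key mapping property is that $\gamma(\lambda)$ maps $H^{s}(\Sigma)$ into $H^{s+3/2}(\dR^n\setminus\Sigma)$ for appropriate $s$, so that, since $\Sigma$ is a compact $(n-1)$-dimensional manifold, the embedding $H^{t}(\Sigma)\hookrightarrow L^2(\Sigma)$ lies in $\sS_{\frac{n-1}{t},\infty}$ by the classical Weyl-type asymptotics for elliptic operators on $\Sigma$; this is exactly where the factor $n-1$ in the numerator comes from.

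Next, for the higher powers I would use the resolvent identity together with the fact that
\begin{equation*}
  (A_{\delta,\alpha}-\lambda)^{-l} - (A_{\rm free}-\lambda)^{-l}
  = \frac{1}{(l-1)!}\,\frac{d^{\,l-1}}{d\lambda^{\,l-1}}
     \Bigl[(A_{\delta,\alpha}-\lambda)^{-1} - (A_{\rm free}-\lambda)^{-1}\Bigr].
\end{equation*}
Differentiating the Krein formula $l-1$ times produces a sum of terms, each of which is a product of operators of the form $(A_\bullet-\lambda)^{-j}\gamma(\lambda)$, powers of $(\Theta-M(\lambda))^{-1}$, derivatives of $M(\lambda)$, and $\gamma(\bar\lambda)^*(A_\bullet-\bar\lambda)^{-j}$ with the total number of "extra" resolvent powers summing to $l-1$. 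The gain is that each factor $(A_\bullet-\lambda)^{-1}$ applied after $\gamma(\lambda)$ improves the Sobolev order by $2$ — here the local smoothness hypothesis $V\in W^{2m-2,\infty}_\Sigma(\dR^n)$ enters, since elliptic regularity up the resolvent ladder near $\Sigma$ requires the potential to be correspondingly smooth in a neighbourhood of $\Sigma$. Thus after using $j$ resolvent powers the relevant trace operator maps into $H^{3/2+2j}(\Sigma)$ (restricted appropriately), contributing a factor in $\sS_{\frac{n-1}{3/2+2j},\infty}$ (for $\delta$) or $\sS_{\frac{n-1}{1/2+2j},\infty}$ (for $\delta'$). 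Summing the Schatten exponents over the two "ends" of each product term via the Hölder inequality for Schatten ideals $\sS_{p,\infty}\cdot\sS_{q,\infty}\subset\sS_{r,\infty}$ with $1/r=1/p+1/q$, one checks that the worst term gives total exponent $\frac{n-1}{2l+1}$ in the $\delta$-case and $\frac{n-1}{2l}$ in the $\delta'$-case, while the purely "boundary" factors $(\Theta-M(\lambda))^{-1}$ and derivatives of $M(\lambda)$ are bounded (or smoothing) on $L^2(\Sigma)$ and do not spoil the estimate. Finally, the statements about wave operators follow from the Birman--Krein / Kato--Rosenblum-type invariance principle: if the $l$-th power of the resolvent difference is trace class, i.e.\ if $\frac{n-1}{2l+1}<1$ (equivalently $2l+1>n-1$, and with $l\le m$ this reads $2m-2>n-4$) for the $\delta$-case and $\frac{n-1}{2l}<1$ (i.e.\ $2m-2>n-3$) for the $\delta'$-case, then the wave operators for $\{A_\bullet,A_{\rm free}\}$ exist and are complete.

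The main obstacle I expect is the bookkeeping in the differentiated Krein formula: one must show that \emph{every} term arising from $\frac{d^{l-1}}{d\lambda^{l-1}}$ splits as a product in which the resolvent powers are genuinely "attached" to $\gamma(\lambda)$ or $\gamma(\bar\lambda)^*$ on the appropriate side, rather than being sandwiched between boundary factors where they would not improve Sobolev regularity. Handling the derivatives of $M(\lambda)$ requires knowing that $M'(\lambda)=\gamma(\bar\lambda)^*\gamma(\lambda)$ and, more generally, that higher derivatives of $M(\lambda)$ are themselves expressible through $\gamma(\lambda)$, $\gamma(\bar\lambda)^*$ and resolvents of $A_{\rm free}$; feeding these identities back in, each derivative of $M$ also contributes smoothing of order $3$ (resp.\ $1$) on $L^2(\Sigma)$, so these terms are in fact \emph{better} than the governing term and cause no harm, but verifying this cleanly is the technical heart of the argument. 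A secondary subtlety is making precise the local nature of the regularity hypothesis on $V$: since $\gamma(\lambda)$ solves the homogeneous equation $(-\Delta+V-\lambda)u=0$ away from $\Sigma$ and the singularities of functions in $\dom A_\bullet$ are concentrated near $\Sigma$, only the smoothness of $V$ in a neighbourhood of $\Sigma$ is needed for the elliptic-regularity step, and one must isolate this by a cutoff argument so that the global $L^\infty$-bound on $V$ suffices elsewhere.
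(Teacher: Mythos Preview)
Your proposal is essentially correct and would yield Theorem~D, but it follows a genuinely different route from the paper. You propose to obtain the $l$th resolvent power difference by differentiating Krein's formula $l-1$ times and controlling all Leibniz terms, including higher derivatives of $M(\lambda)$ via the identity $M'(\lambda)=\gamma(\bar\lambda)^*\gamma(\lambda)$. The paper instead avoids differentiation altogether: it uses the purely algebraic identity $a^m-b^m=\sum_{k=0}^{m-1}a^{m-k-1}(a-b)b^k$ applied to the resolvents, together with an abstract induction lemma (Lemma~\ref{lem:respwrdiff}). Given the factorization $(H-\lambda_0)^{-1}-(K-\lambda_0)^{-1}=BC$ from Krein's formula and the estimates $(K-\lambda_0)^{-k}B\in\sS_{\frac{1}{ak+b_1},\infty}$, $C(K-\lambda_0)^{-k}\in\sS_{\frac{1}{ak+b_2},\infty}$ (which are exactly your elliptic-regularity inputs, collected in Proposition~\ref{prop:prelim}), the induction step writes each summand $T_{m,k}(\lambda_0)=(H-\lambda_0)^{-(m-k-1)}BC(K-\lambda_0)^{-k}$ as $D_{m-k-1}(\lambda_0)BC(K-\lambda_0)^{-k}+(K-\lambda_0)^{-(m-k-1)}BC(K-\lambda_0)^{-k}$, so the resolvent powers are already attached to the correct sides of $B$ and $C$ without any rearrangement. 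This bypasses precisely the bookkeeping obstacle you identify: no derivatives of $M$ or of $(\Theta-M)^{-1}$ ever appear. Your differentiation method is a legitimate alternative (and is used elsewhere in the literature, e.g.\ in work of Grubb), but it requires tracking more terms; the paper's algebraic/inductive argument is shorter and more self-contained. One further structural difference for item~(ii): you gesture at a direct Krein formula for $A_{\delta'\!,\beta}$ versus $A_{\rm free}$, but in the quasi boundary triple $\wh\Pi$ the reference operator is $A_{\rm N,i,e}$, not $A_{\rm free}$. The paper therefore proves the $\delta'$ estimate in two stages, first comparing $A_{\delta'\!,\beta}$ with $A_{\rm N,i,e}$ (Theorem~\ref{thm:S_infty3}, giving $\sS_{\frac{n-1}{2l+1},\infty}$) and separately $A_{\rm N,i,e}$ with $A_{\rm free}$ (giving the weaker $\sS_{\frac{n-1}{2l},\infty}$, which is the governing term); you would need the same splitting in your approach.
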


Note that, for $m=1$, Theorem~D reduces to Theorem~C. The proof of Theorem~D is essentially
a consequence of Krein's formula, some algebraic considerations and results on elliptic regularity.
The statements in Theorem~D are contained in Theorems~\ref{thm:S_infty2}, \ref{thm:S_infty4} and
Corollaries~\ref{cor1}, \ref{cor2}.

The paper is organized as follows. Section~\ref{sec:prel} contains
preliminary material on
Schatten--von Neumann classes, general extension theory of symmetric operators and function spaces.
In particular, we prove some useful abstract lemmas on resolvent power differences in Section~\ref{sec:SvN}.
Furthermore,
in Section~\ref{sec:qbt} we collect basic facts about quasi boundary
triples --- a convenient abstract tool from \cite{BL07,BL11} to study self-adjoint extensions
of symmetric partial differential
operators --- and recall a variant of Krein's formula suitable for our purposes.
Section~\ref{sec:3} is devoted to the rigorous mathematical definition and the investigation of the spectral properties of
the operators $A_{\delta,\alpha}$ and $A_{\delta^\prime\!,\beta}$.
In Sections~\ref{sec:delta} and \ref{sec:delta'} we provide
proofs of self-adjointness and sufficient conditions for $H^2$-regularity
of the operator domains, cf.\ Theorems A and B, and we discuss variants of the
Birman--Schwinger principle for the description of eigenvalues of the self-adjoint operators
$A_{\delta,\alpha}$ and $A_{\delta'\!,\beta}$. All these results are obtained by means
of suitable quasi boundary triples constructed in these sections.
Section~\ref{sec:delta} is accompanied by
a comparison with the sesquilinear form approach to Schr\"odinger operators
with $\delta$-potentials on hypersurfaces. In Section~\ref{sec:finiteness}
we obtain basic spectral properties of the self-adjoint operators $A_{\delta,\alpha}$
and $A_{\delta'\!,\beta}$ such as lower semi-boundedness and finiteness of
negative spectra if $V\equiv 0$. Section~\ref{sec:4} contains our main results on
Schatten--von Neumann estimates from Theorems C and D for resolvent power differences of
operators $A_{\delta,\alpha}$, $A_{\delta'\!,\beta}$ and $A_{\rm free}$.
As a direct consequence of these estimates we establish the existence
and completeness of wave operators for certain scattering pairs arising
in quantum mechanics.

We emphasize again that the results in the body of the paper are sometimes
stronger than in the introduction. Several theorems of their own
independent interest are formulated only in the main part.
We also mention that many of the results in the paper extend  to more general second order
differential operators with sufficiently smooth coefficients and also
remain to be true under weaker assumptions on the smoothness of the hypersurface $\Sigma$; in this context
we refer the reader to the recent papers \cite{AGW10, AGMT10,GM08-1, GM08, GM09, GM11, PR09}
on elliptic operators in non-smooth domains.

\section{Preliminaries}
\label{sec:prel}

This section contains some preliminary material that will be used in the main part of the paper.
In Section~\ref{sec:SvN} we first recall some basic properties of Schatten--von Neumann ideals
and we prove an abstract lemma with sufficient conditions for resolvent power differences
to belong to some Schatten--von Neumann class.
The concept of quasi boundary triples and their Weyl functions from general
extension theory of symmetric operators is briefly reviewed in Section~\ref{sec:qbt}.
Sections~\ref{sec:fspaces} and~\ref{sec:locspaces} contain mainly definitions and
notations for the function spaces used in the paper.

\subsection{$\sS_p$ and $\sS_{p,\infty}$-classes}
\label{sec:SvN}

Let $\cH$ and $\cG$ be separable Hilbert spaces. The space of bounded everywhere defined
linear operators from $\cH$ into $\cG$ is denoted by $\cB(\cH,\cG)$, and we
set $\cB(\cH)\defeq\cB(\cH,\cH)$. The ideal of compact operators mapping
from $\cH$ into $\cG$ is denoted by $\sS_\infty(\cH,\cG)$, and we set
$\sS_\infty(\cH) \defeq \sS_\infty(\cH,\cH)$.
We agree to write $\sS_\infty$ when it is clear from the
context between which spaces the operators act.
The \emph{singular values} (or \emph{$s$-numbers}) $s_k(T)$, $k=1,2,\dots$,
of a compact operator $T\in\sS_\infty(\cH,\cG)$ are defined as the
eigenvalues of the non-negative compact operator $(T^*T)^{1/2}$,
enumerated in non-increasing order and with multiplicities taken into
account. Recall that the singular values of $T$ and $T^*$ coincide; see, e.g.\ \cite[II.\S2.2]{GK69}.
The Schatten--von Neumann class of operator ideals $\sS_p$ and the
weak Schatten--von Neumann class of operator ideals $\sS_{p,\infty}$ are defined as
\begin{equation*}
  \begin{aligned}
    \sS_{p} \defequ \biggl\{T\in\sS_\infty\colon \sum_{k=1}^\infty
    \bigl(s_k(T)\bigr)^p < \infty
    \biggr\}, \\
    \sS_{p,\infty} \defequ \Bigl\{T\in\sS_\infty\colon s_k(T) =
    O(k^{-1/p}),\,k\to\infty\Bigr\},
  \end{aligned}
  \qquad p > 0;
\end{equation*}
they play an important role later on. We refer the reader to
\cite[III.\S7 and III.\S14]{GK69}, \cite[Chapter 2]{S05}
and to \cite[Chapter~11]{BS87} for a detailed study
of the classes $\sS_p$ and $\sS_{p,\infty}$. If a
compact operator $T\in\sS_\infty(\cH,\cG)$ belongs
to $\sS_p$ or $\sS_{p,\infty}$, then we also write $T\in\sS_p(\cH,\cG)$
or $T\in\sS_{p,\infty}(\cH,\cG)$, respectively, if
the spaces $\cH$ and $\cG$ are important in the context.
Moreover, we set
\begin{equation*}
\begin{split}
  &\sS_{p} \cdot \sS_{q} \defeq \bigl\{ T_1T_2 \colon
  T_1\in \sS_{p}, T_2\in \sS_{q}\bigr\}, \\[1ex]
  &\sS_{p,\infty} \cdot \sS_{q,\infty} \defeq \bigl\{ T_1T_2 \colon
  T_1\in \sS_{p,\infty}, T_2\in \sS_{q,\infty}\bigr\}.
\end{split}
\end{equation*}

\medskip

\noindent

The proof of the next statement can be found in \cite{BS87,GK69} and, e.g. \cite[Lemma~2.3]{BLL10}.

\begin{lem}\label{splemma}
Let $p,q,r,s,t>0$. Then
the following statements are true:
\begin{itemize}\setlength{\itemsep}{1.2ex}
\item[(i)] $\dis\sS_p\cdot\sS_q=\sS_r$ and
$\dis\sS_{p,\infty}\cdot\sS_{q,\infty}=\sS_{r,\infty}$ when
$p^{-1}+q^{-1}=r^{-1}$, or, equivalently
$$\dis \sS_{\frac{1}{s}}\cdot\sS_{\frac{1}{t}}=
\sS_{\frac{1}{s+t}}\qquad\text{and}\qquad\dis
\sS_{\frac{1}{s},\infty}\cdot\sS_{\frac{1}{t},\infty}=
\sS_{\frac{1}{s+t},\infty};$$
\item[(ii)] If $T\in\sS_p$, then $T^*\in\sS_p$; if $T\in\sS_{p,\infty}$, then $T^*\in\sS_{p,\infty}$;
\item[(iii)] $\dis \sS_{p} \subset \sS_{p,\infty}$ and
$\sS_{p^\prime,\infty}\subset\sS_p$ for all $p^\prime<p$.
\end{itemize}
\end{lem}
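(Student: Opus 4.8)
The plan is to reduce everything to the classical description of the singular values of products, adjoints, and of the sequences $(k^{-1/p})$, as contained in the references \cite{BS87,GK69}. Since all three claims are standard facts about the ideals $\sS_p$ and $\sS_{p,\infty}$, the proof amounts to assembling the right classical inequalities; the only genuine work is to present the equivalences cleanly and to check that the multiplicativity statement in (i) is an \emph{equality} of sets, not just an inclusion.

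First I would prove (i). The inclusion $\sS_p\cdot\sS_q\subseteq\sS_r$ (and likewise for the weak ideals) is the classical Hölder-type inequality for $s$-numbers: from the submultiplicativity estimate $s_{j+k-1}(T_1T_2)\le s_j(T_1)s_k(T_2)$ one deduces, with $k=j$, that $s_{2j}(T_1T_2)\le s_j(T_1)s_j(T_2)$, and then the Hölder inequality for the sequences $(s_j(T_1))\in\ell^p$ and $(s_j(T_2))\in\ell^q$ gives $(s_j(T_1T_2))\in\ell^r$; for the weak ideals one uses instead that $s_j(T_i)=O(j^{-1/p})$, $O(j^{-1/q})$ forces $s_{2j}(T_1T_2)=O(j^{-1/p-1/q})=O(j^{-1/r})$. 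This is exactly \cite[III.§7]{GK69}. For the reverse inclusion $\sS_r\subseteq\sS_p\cdot\sS_q$, given $T\in\sS_r$ one writes the Schmidt (polar/singular value) decomposition $T=\sum_k s_k(T)\,(\cdot,e_k)f_k$ and splits $s_k(T)=s_k(T)^{r/p}\cdot s_k(T)^{r/q}$; defining $T_1=\sum_k s_k(T)^{r/p}(\cdot,e_k)g_k$ and $T_2=\sum_k s_k(T)^{r/q}(\cdot,g_k)f_k$ with an auxiliary orthonormal system $(g_k)$, one has $T=T_1T_2$, $T_1\in\sS_p$ and $T_2\in\sS_q$ because $s_k(T)^{r/p}\in\ell^p$ and $s_k(T)^{r/q}\in\ell^q$. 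The same factorization works verbatim for $\sS_{r,\infty}$. The reformulation in terms of $\sS_{1/s}\cdot\sS_{1/t}=\sS_{1/(s+t)}$ is just the substitution $p=1/s$, $q=1/t$, $r=1/(s+t)$.

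Next, (ii) is immediate from the identity $s_k(T^*)=s_k(T)$ for all $k$, which holds because $(TT^*)^{1/2}$ and $(T^*T)^{1/2}$ have the same nonzero eigenvalues with the same multiplicities (see \cite[II.§2.2]{GK69}); hence membership of $(s_k(T))$ in $\ell^p$, respectively the bound $s_k(T)=O(k^{-1/p})$, is inherited by $T^*$.

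Finally, (iii): the inclusion $\sS_p\subseteq\sS_{p,\infty}$ follows since $\sum_k s_k(T)^p<\infty$ forces the partial sums to be bounded, and from $k\,s_k(T)^p\le\sum_{j=1}^k s_j(T)^p\le\sum_{j=1}^\infty s_j(T)^p$ (using that $s_j(T)$ is non-increasing) we get $s_k(T)\le C k^{-1/p}$. For $\sS_{p',\infty}\subseteq\sS_p$ with $p'<p$: if $s_k(T)=O(k^{-1/p'})$ then $s_k(T)^p=O(k^{-p/p'})$ with exponent $p/p'>1$, so $\sum_k s_k(T)^p<\infty$, i.e.\ $T\in\sS_p$. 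In all of (i)--(iii) I do not expect any obstacle; the statements are textbook, and the only point deserving a sentence of care is supplying the explicit factorization $T=T_1T_2$ needed for the non-trivial inclusion in (i), which is why I would write that step out rather than merely cite it.
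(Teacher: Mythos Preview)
Your argument is correct and is precisely the standard textbook derivation one finds in the cited references. Note, however, that the paper does not give a proof of this lemma at all: it simply refers the reader to \cite{BS87,GK69} and \cite[Lemma~2.3]{BLL10}. So there is nothing to compare against --- you have supplied what the paper deliberately omitted.

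One small cosmetic slip: in your factorization for the reverse inclusion in (i), the labels of $T_1$ and $T_2$ are interchanged. With $T=\sum_k s_k(T)\,(\cdot,e_k)f_k$ you want
\[
T_1=\sum_k s_k(T)^{r/p}(\cdot,g_k)f_k\in\sS_p,\qquad
T_2=\sum_k s_k(T)^{r/q}(\cdot,e_k)g_k\in\sS_q,
\]
so that $T_1T_2=T$ in accordance with the paper's convention $\sS_p\cdot\sS_q=\{T_1T_2:T_1\in\sS_p,\ T_2\in\sS_q\}$. As written, your $T_1T_2$ does not compose in the right order. This is purely notational and does not affect the substance of the argument.
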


\medskip

\noindent
Let $H$ and $K$ be linear operators in a separable Hilbert space $\cH$
and assume that $\rho(H)\cap\rho(K)\not=\varnothing$.
In order to investigate properties of the difference of the $m$th
powers of the resolvents,
\begin{equation*}\label{resdiffhk}
  (H-\lambda)^{-m}-(K-\lambda)^{-m}, \qquad
  \lambda\in\rho(H)\cap\rho(K),\,\,\,\,m\in\dN,
\end{equation*}
recall that, for two elements $a$ and $b$ of some non-commutative algebra,
the following formula holds:
\begin{equation}
\label{eq:diffpow}
  a^m-b^m = \sum_{k=0}^{m-1} a^{m-k-1}\bigl(a-b\bigr)b^k.
\end{equation}
Substituting $a$ and $b$ by the resolvents of $H$ and $K$, respectively,
and setting
\begin{equation}\label{tmk}
  T_{m,k}(\lambda) \defeq
  (H-\lambda)^{-(m-k-1)}\Bigl((H-\lambda)^{-1}-(K-\lambda)^{-1}\Bigr)(K-\lambda)^{-k}
\end{equation}
for $\lambda\in\rho(H)\cap\rho(K)$, $m\in\dN$ and $k=0,1,\dots,m-1$,
we conclude from \eqref{eq:diffpow} that
\begin{equation}
\label{eq:diffpow2}
  (H-\lambda)^{-m}-(K-\lambda)^{-m}= \sum_{k=0}^{m-1} T_{m,k}(\lambda)
\end{equation}
holds for all $\lambda\in\rho(H)\cap\rho(K)$ and $m\in\dN$. In the next
lemma we show that $(H-\lambda)^{-m}-(K-\lambda)^{-m}$
belongs to $\sS_{p,\infty}$ for all $\lambda\in\rho(H)\cap\rho(K)$ if
all the operators
$T_{m,0}(\lambda_0), T_{m,1}(\lambda_0), \dots,T_{m,m-1}(\lambda_0) $
belong to $\sS_{p,\infty}$ for some $\lambda_0\in\rho(H)\cap\rho(K)$.
In the case $m=1$ the statement is well known.
We note that the statement holds in the same form if the class $\sS_{p,\infty}$ 
is replaced by any operator ideal, e.g.\ $\sS_p$.

\begin{lem}\label{resdifflemma2}
Let $H$ and $K$ be linear operators in $\cH$ such that
$\rho(H)\cap\rho(K)\not=\varnothing$.
Moreover, let $p>0$, $m\in\dN$ and $T_{m,k}$ be as in \eqref{tmk}, 
and assume that $T_{m,k}(\lambda_0)\in\sS_{p,\infty}(\cH)$
for some $\lambda_0\in\rho(H)\cap\rho(K)$ and all $k=0,1,\dots,m-1$.
Then
\[
  (H-\lambda)^{-m}-(K-\lambda)^{-m}\in\sS_{p,\infty}(\cH)
\]
for all $\lambda\in\rho(H)\cap\rho(K)$.
\end{lem}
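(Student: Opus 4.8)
The plan is to reduce the claim for general $\lambda$ to the hypothesis at $\lambda_0$ by exploiting the resolvent identity and the factorization properties of the ideal $\sS_{p,\infty}$ (which, by Lemma~\ref{splemma}(i), absorbs bounded operators on either side, since $\sS_{p,\infty}$ is a two-sided ideal in $\cB(\cH)$). In view of the decomposition \eqref{eq:diffpow2}, it suffices to show that each summand $T_{m,k}(\lambda)$ lies in $\sS_{p,\infty}(\cH)$ for all $\lambda\in\rho(H)\cap\rho(K)$, given that $T_{m,k}(\lambda_0)\in\sS_{p,\infty}(\cH)$.

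First I would treat the core case $m=1$: the operator $T_{1,0}(\lambda)=(H-\lambda)^{-1}-(K-\lambda)^{-1}$. Using the standard resolvent identities $(H-\lambda)^{-1}=(H-\lambda_0)^{-1}\bigl(I+(\lambda-\lambda_0)(H-\lambda)^{-1}\bigr)$ and likewise for $K$, one writes
\begin{equation*}
  (H-\lambda)^{-1}-(K-\lambda)^{-1}
  = \bigl(I+(\lambda-\lambda_0)(H-\lambda)^{-1}\bigr)\,
    \bigl((H-\lambda_0)^{-1}-(K-\lambda_0)^{-1}\bigr)\,
    \bigl(I+(\lambda-\lambda_0)(K-\lambda)^{-1}\bigr),
\end{equation*}
which can be verified by expanding and using the first resolvent identity for $H$ and $K$ separately. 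Since the outer factors are bounded and $T_{1,0}(\lambda_0)\in\sS_{p,\infty}$, the ideal property gives $T_{1,0}(\lambda)\in\sS_{p,\infty}$ for all admissible $\lambda$.

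For general $m$ and $0\le k\le m-1$, I would sandwich the middle resolvent difference in $T_{m,k}(\lambda)$ using the $m=1$ result: $(H-\lambda)^{-1}-(K-\lambda)^{-1}\in\sS_{p,\infty}$ whenever $(H-\lambda_0)^{-1}-(K-\lambda_0)^{-1}=T_{1,0}(\lambda_0)\in\sS_{p,\infty}$; but the latter is exactly $T_{m,0}(\lambda_0)$ when $m=1$, and for general $m$ one needs the hypothesis on all $T_{m,k}(\lambda_0)$. The cleanest route is: observe that $T_{m,k}(\lambda_0)\in\sS_{p,\infty}$ for \emph{all} $k$ together with \eqref{eq:diffpow2} (at $\lambda_0$) does \emph{not} immediately give $T_{1,0}(\lambda_0)\in\sS_{p,\infty}$, so instead I factor each $T_{m,k}(\lambda)$ directly. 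Write $(H-\lambda)^{-(m-k-1)} = (H-\lambda_0)^{-(m-k-1)}B_H(\lambda)$ and $(K-\lambda)^{-k}=B_K(\lambda)(K-\lambda_0)^{-k}$ with $B_H(\lambda),B_K(\lambda)\in\cB(\cH)$ obtained by iterating the resolvent identity, and similarly sandwich the inner factor $(H-\lambda)^{-1}-(K-\lambda)^{-1}$ between bounded operators as above so that the $(H-\lambda_0)^{-1}-(K-\lambda_0)^{-1}$ appears in the middle. Collecting powers one arrives at $T_{m,k}(\lambda)=C_1(\lambda)\,T_{m,k}(\lambda_0)\,C_2(\lambda)$ for suitable $C_1,C_2\in\cB(\cH)$ — this is the place where one must be careful that the \emph{exact} powers $m-k-1$ on the left of the resolvent difference and $k$ on the right are preserved when passing from $\lambda$ to $\lambda_0$, which works because the bounded correction factors commute past the relevant resolvents of $H$ (resp.\ $K$) or can be moved to the outside. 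Then the two-sided ideal property of $\sS_{p,\infty}$ finishes the argument, and summing over $k$ via \eqref{eq:diffpow2} yields the claim.

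**Main obstacle.** The only genuinely delicate point is the bookkeeping in the last step: ensuring that when one rewrites $(H-\lambda)^{-(m-k-1)}\bigl((H-\lambda)^{-1}-(K-\lambda)^{-1}\bigr)(K-\lambda)^{-k}$ in terms of objects at $\lambda_0$, the inner difference is genuinely conjugated into $(H-\lambda_0)^{-1}-(K-\lambda_0)^{-1}$ by \emph{bounded} operators, while the outer resolvent powers also reduce to powers at $\lambda_0$ times bounded factors, without generating extra resolvents that would change the exponent $m-k-1$ or $k$. This is handled by repeatedly applying the first resolvent identity and using that all resolvents of $H$ (and all resolvents of $K$) mutually commute; everything else — the ideal properties and the algebraic identity \eqref{eq:diffpow2} — is already available.
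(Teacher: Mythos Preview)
Your proposal is correct and follows essentially the same route as the paper. The paper carries out exactly the computation you sketch: it introduces the bounded operators $E_\lambda=I+(\lambda-\lambda_0)(H-\lambda)^{-1}$ and $F_\lambda=I+(\lambda-\lambda_0)(K-\lambda)^{-1}$, verifies the identity $E_\lambda D_1(\lambda_0)F_\lambda=D_1(\lambda)$ for the resolvent difference (your $m=1$ step), uses $E_\lambda^l(H-\lambda_0)^{-l}=(H-\lambda)^{-l}$ and $(K-\lambda_0)^{-l}F_\lambda^l=(K-\lambda)^{-l}$ for the outer powers, and then --- exploiting that $E_\lambda$ commutes with resolvents of $H$ and $F_\lambda$ with resolvents of $K$ --- derives the explicit formula $T_{m,k}(\lambda)=E_\lambda^{m-k}T_{m,k}(\lambda_0)F_\lambda^{k+1}$, which is precisely your $C_1(\lambda)T_{m,k}(\lambda_0)C_2(\lambda)$ with the bookkeeping made concrete.
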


\begin{proof}
For $\lambda\in\rho(H)\cap\rho(K)$ define
\begin{equation}\label{EF}
  E_{\lambda} \defeq I + (\lambda-\lambda_0)(H-\lambda)^{-1}
  \quad\text{and}\quad
  F_{\lambda} \defeq I + (\lambda-\lambda_0)(K-\lambda)^{-1}.
\end{equation}
The resolvent identity implies that
\begin{equation}\label{EHFK1}
  E_{\lambda}(H-\lambda_0)^{-1}
  = (H-\lambda_0)^{-1} + (\lambda-\lambda_0)(H-\lambda)^{-1}(H-\lambda_0)^{-1}
  = (H-\lambda)^{-1}
\end{equation}
and, similarly,
\begin{equation}\label{EHFK2}
  (K-\lambda_0)^{-1}F_{\lambda} = (K-\lambda)^{-1}.
\end{equation}
By induction we obtain
\begin{equation}\label{EHFKl}
  E_{\lambda}^l(H-\lambda_0)^{-l} = (H-\lambda)^{-l} \quad\text{and}\quad
  (K-\lambda_0)^{-l}F_{\lambda}^l = (K-\lambda)^{-l}
\end{equation}
for all $l\in\dN$. Set
$D_1(\lambda)\defeq(H-\lambda)^{-1}-(K-\lambda)^{-1}$,
$\lambda\in\rho(H)\cap\rho(K)$.
Then \eqref{EHFK1}, \eqref{EHFK2} and \eqref{EF} imply that
\begin{equation}\label{resdiffideal2}
\begin{split}
 E_{\lambda}D_1(\lambda_0)F_{\lambda}
  &= E_\lambda(H-\lambda_0)^{-1}F_\lambda - E_\lambda(K-\lambda_0)^{-1}F_\lambda \\
  &= (H-\lambda)^{-1} F_\lambda - E_\lambda (K-\lambda)^{-1} \\
  &= (H-\lambda)^{-1} + (\lambda-\lambda_0)(H-\lambda)^{-1}(K-\lambda)^{-1} \\
  &\quad\qquad - (K-\lambda)^{-1} - (\lambda-\lambda_0)(H-\lambda)^{-1}(K-\lambda)^{-1} \\
  &= D_1(\lambda).
\end{split}
\end{equation}
For $k=0,1\dots,m-1$ and all $\lambda\in\rho(H)\cap\rho(K)$ we obtain from~\eqref{EHFKl},
\eqref{resdiffideal2} and the facts that $E_{\lambda}$ commutes with $(H-\lambda_0)^{-1}$ and
$F_{\lambda}$ commutes with $(K-\lambda_0)^{-1}$ the following relation
\begin{align*}
  T_{m,k}(\lambda)
  &= (H-\lambda)^{-(m-k-1)}D_1(\lambda)(K-\lambda)^{-k}
  \\[1ex]
  &= (H-\lambda)^{-(m-k-1)}E_{\lambda}D_1(\lambda_0)F_{\lambda}(K-\lambda)^{-k}
  \\[1ex]
  &= E_{\lambda}^{m-k-1}(H-\lambda_0)^{-(m-k-1)}E_{\lambda}D_1(\lambda_0)
  F_\lambda(K-\lambda_0)^{-k}F_{\lambda}^{k}
  \\[1ex]
  &= E_{\lambda}^{m-k}(H-\lambda_0)^{-(m-k-1)}D_1(\lambda_0)(K-\lambda_0)^{-k}F_{\lambda}^{k+1}
  \\[1ex]
  &= E_{\lambda}^{m-k}T_{m,k}(\lambda_0)F_{\lambda}^{k+1}.
\end{align*}
By assumption, $T_{m,k}(\lambda_0)\in\sS_{p,\infty}$, and hence we conclude that
$T_{m,k}(\lambda)\in\sS_{p,\infty}$ for $k=0,1,\dots,m-1$ and $\lambda\in\rho(H)\cap\rho(K)$.
This together with~\eqref{eq:diffpow2} implies that
\[
  (H-\lambda)^{-m}-(K-\lambda)^{-m}= \sum_{k=0}^{m-1} T_{m,k}(\lambda)
  \in \sS_{p,\infty}(\cH)
\]
for all $\lambda\in\rho(H)\cap\rho(K)$.
\end{proof}

The following lemma will be used in Section~\ref{sec:pwrdiff} to show that certain
resolvent power differences are in some class $\sS_{p,\infty}$.

\begin{lem}\label{lem:respwrdiff}
Let $H$ and $K$ be linear operators in $\cH$, let $\cK$ be an auxiliary
Hilbert space and assume that, for some $\lambda_0\in\rho(H)\cap\rho(K)$,
there exist operators $B\in\cB(\cK,\cH)$ and $C\in\cB(\cH,\cK)$
such that
\begin{equation}\label{fact}
  (H-\lambda_0)^{-1} - (K-\lambda_0)^{-1} = BC.
\end{equation}
Let $a>0$ and $b_1,b_2\ge0$ be such that $a \le b_1+b_2$ and set
$b \defeq b_1 + b_2-a$.
Moreover, let $r\in\NN$ and assume that
\begin{equation}\label{prodinSr}
  \begin{aligned}
    & (K-\lambda_0)^{-k}B \in \sS_{\frac{1}{ak+b_1},\infty}, \\[0.5ex]
    & C(K-\lambda_0)^{-k} \in \sS_{\frac{1}{ak+b_2},\infty},
  \end{aligned}
  \qquad k=0,1,\dots,r-1.
\end{equation}
Then
\begin{equation}\label{respwrdiff}
  (H-\lambda)^{-l} - (K-\lambda)^{-l} \in \sS_{\frac{1}{al+b},\infty}
\end{equation}
for all $\lambda\in\rho(H)\cap\rho(K)$ and all $l=1,2,\dots,r$.
\end{lem}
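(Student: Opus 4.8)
The plan is to reduce the statement to repeated applications of Lemma~\ref{resdifflemma2} and the multiplication rule in Lemma~\ref{splemma}(i). The key observation is that, because of \eqref{fact}, the operators $T_{l,k}(\lambda_0)$ from \eqref{tmk} factor through $B$ and $C$, and the hypothesis \eqref{prodinSr} says exactly that these factors lie in the right weak Schatten--von Neumann classes. So first I would fix $l\in\{1,\dots,r\}$ and $k\in\{0,1,\dots,l-1\}$, and rewrite
\begin{equation*}
  T_{l,k}(\lambda_0) = (H-\lambda_0)^{-(l-k-1)}\bigl((H-\lambda_0)^{-1}-(K-\lambda_0)^{-1}\bigr)(K-\lambda_0)^{-k}
  = (H-\lambda_0)^{-(l-k-1)}BC(K-\lambda_0)^{-k}.
\end{equation*}

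Next I would handle the factor $(H-\lambda_0)^{-(l-k-1)}B$. The hypothesis only gives information about $(K-\lambda_0)^{-j}B$, so I would first convert the power of the $H$-resolvent into a power of the $K$-resolvent: by \eqref{fact} we have $(H-\lambda_0)^{-1} = (K-\lambda_0)^{-1} + BC$, and one can show by induction that $(H-\lambda_0)^{-j}B$ is a finite sum of terms each containing at least one power of $(K-\lambda_0)^{-1}$ times $B$ and bounded operators, so that $(H-\lambda_0)^{-(l-k-1)}B$ lies in the same ideal as $(K-\lambda_0)^{-(l-k-1)}B$ — or, more carefully, one argues that $(H-\lambda_0)^{-j}$ maps $\ran B$ appropriately and iterates. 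Actually the cleanest route is probably to observe that it suffices to know $(K-\lambda_0)^{-k}B\in\sS_{1/(ak+b_1),\infty}$ for $k=0,\dots,r-1$ in order to conclude $(H-\lambda_0)^{-k}B\in\sS_{1/(ak+b_1),\infty}$ for $k=0,\dots,r-1$ as well, using the resolvent-type expansion and Lemma~\ref{splemma}(i),(iii); the same for $C$. Granting this, Lemma~\ref{splemma}(i) gives
\begin{equation*}
  (H-\lambda_0)^{-(l-k-1)}B \in \sS_{\frac{1}{a(l-k-1)+b_1},\infty},\qquad
  C(K-\lambda_0)^{-k} \in \sS_{\frac{1}{ak+b_2},\infty},
\end{equation*}
and multiplying, $T_{l,k}(\lambda_0) \in \sS_{\frac{1}{a(l-k-1)+b_1+ak+b_2},\infty} = \sS_{\frac{1}{a(l-1)+b_1+b_2},\infty} = \sS_{\frac{1}{al+b},\infty}$, since $b = b_1+b_2-a$ by definition. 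The exponent is independent of $k$, which is exactly what is needed.

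Finally, with all $T_{l,k}(\lambda_0)\in\sS_{\frac{1}{al+b},\infty}$ for $k=0,\dots,l-1$, Lemma~\ref{resdifflemma2} (applied with $m=l$ and $p=\frac{1}{al+b}$) yields $(H-\lambda)^{-l}-(K-\lambda)^{-l}\in\sS_{\frac{1}{al+b},\infty}$ for all $\lambda\in\rho(H)\cap\rho(K)$, which is \eqref{respwrdiff}. Doing this for every $l=1,\dots,r$ finishes the proof. The main obstacle I anticipate is the bookkeeping in the induction that transfers the mapping properties of $(K-\lambda_0)^{-k}B$ to $(H-\lambda_0)^{-k}B$: one must expand $(H-\lambda_0)^{-j}$ in terms of $(K-\lambda_0)^{-1}$ and $BC$, keep track of how many $K$-resolvent powers sit next to $B$ in each resulting term, and check that the weakest term still lands in $\sS_{1/(aj+b_1),\infty}$ — here one uses that more resolvent powers only improve the Schatten exponent, together with Lemma~\ref{splemma}(iii) to absorb the mismatched terms. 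Everything else is a routine application of the ideal property and the multiplication rule.
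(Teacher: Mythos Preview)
Your approach is correct and shares the overall framework with the paper's proof: factor $T_{l,k}(\lambda_0)$ through $B$ and $C$ using \eqref{fact}, estimate each factor, and invoke Lemma~\ref{resdifflemma2}. The difference lies in how the factor $(H-\lambda_0)^{-(l-k-1)}B$ is handled. You propose to prove $(H-\lambda_0)^{-j}B\in\sS_{1/(aj+b_1),\infty}$ directly by expanding $(H-\lambda_0)^{-j}=\bigl((K-\lambda_0)^{-1}+BC\bigr)^j$ and estimating each of the $2^j$ terms; this works, since a generic term has the shape $R_K^{i_0}B\cdot C\cdot R_K^{i_1}B\cdots C\cdot R_K^{i_q}B$ with $i_0+\cdots+i_q+q=j$, and using $R_K^{i_m}B\in\sS_{1/(ai_m+b_1),\infty}$ and $C\in\sS_{1/b_2,\infty}$ places it in $\sS_{1/(a(j-q)+(q+1)b_1+qb_2),\infty}\subset\sS_{1/(aj+b_1),\infty}$ because $b_1+b_2\ge a$.

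The paper avoids this combinatorics by running a single induction on $l$. In the inductive step it writes $(H-\lambda_0)^{-(m-k-1)}=D_{m-k-1}(\lambda_0)+(K-\lambda_0)^{-(m-k-1)}$ with $D_j(\lambda_0)\defeq(H-\lambda_0)^{-j}-(K-\lambda_0)^{-j}$, so that $T_{m,k}(\lambda_0)$ splits into two pieces. The second piece $(K-\lambda_0)^{-(m-k-1)}B\cdot C(K-\lambda_0)^{-k}$ is controlled directly by \eqref{prodinSr}; the first piece $D_{m-k-1}(\lambda_0)\cdot B\cdot C(K-\lambda_0)^{-k}$ uses the inductive hypothesis $D_{m-k-1}(\lambda_0)\in\sS_{1/(a(m-k-1)+b),\infty}$ together with $B\in\sS_{1/b_1,\infty}$, landing in $\sS_{1/(am+2b),\infty}\subset\sS_{1/(am+b),\infty}$ since $b\ge0$. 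This is shorter and sidesteps precisely the bookkeeping you identified as the main obstacle; your direct expansion, in turn, yields the auxiliary estimate $(H-\lambda_0)^{-j}B\in\sS_{1/(aj+b_1),\infty}$ as an independent byproduct.
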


\begin{proof}
We prove the statement by induction with respect to $l$.
Using the factorization in \eqref{fact}, the assumptions
in \eqref{prodinSr} with $k=0$ and Lemma~\ref{splemma}\,(i) we obtain
\[
  (H-\lambda_0)^{-1} - (K-\lambda_0)^{-1} = BC \in
  \sS_{\frac{1}{b_1},\infty}\cdot\sS_{\frac{1}{b_2},\infty}
  = \sS_{\frac{1}{b_1+b_2},\infty} = \sS_{\frac{1}{a+b},\infty}.
\]
Now Lemma~\ref{resdifflemma2} with $m=1$ implies that
\[
  (H-\lambda)^{-1} - (K-\lambda)^{-1} \in \sS_{\frac{1}{a+b},\infty}
\]
for all $\lambda\in\rho(H)\cap\rho(K)$, i.e.\ \eqref{respwrdiff} is true
for $l=1$.

For the induction step fix $m\in\NN$, $2\le m \le r$ and assume that
\eqref{respwrdiff}
is satisfied for all $l= 1,2,\dots,m-1$. For $k=0,1,\dots,m-1$ let
$T_{m,k}$ be as in \eqref{tmk}, define
\[
  D_j(\lambda_0) \defeq (H-\lambda_0)^{-j}-(K-\lambda_0)^{-j},\qquad j\in\dN_0,
\]
and write
\begin{equation}
  \begin{aligned}
     &T_{m,k}(\lambda_0) =
    (H-\lambda_0)^{-(m-k-1)}BC(K-\lambda_0)^{-k}\\
    &\qquad\qquad\qquad\qquad= D_{m-k-1}(\lambda_0)BC(K-\lambda_0)^{-k}\\
   &\qquad\qquad\qquad\qquad\qquad\qquad + (K-\lambda_0)^{-(m-k-1)}BC(K-\lambda_0)^{-k}.
\label{two_summ}
  \end{aligned}
\end{equation}
Note that $D_0(\lambda_0) = 0$.  By assumption \eqref{prodinSr} we have
\begin{equation*}
\begin{split}
&  B \in \sS_{\frac{1}{b_1},\infty}, \!\quad C(K-\lambda_0)^{-k}
  \in \sS_{\frac{1}{ak+b_2},\infty},\\
 & (K-\lambda_0)^{-(m-k-1)}B \in \sS_{\frac{1}{a(m-k-1)+b_1},\infty},
\end{split}
\end{equation*}
for $k=0,1,\dots,m-1$.
By the induction assumption we also have
\[
  D_{m-k-1}(\lambda_0) \in \sS_{\frac{1}{a(m-k-1)+b},\infty}
\]
for $k=0,1,\dots,m-1$, and and hence we obtain with Lemma~\ref{splemma}\,(i)
that the first summand in \eqref{two_summ} is in
\[
  \sS_{\frac{1}{a(m-k-1)+b},\infty}\cdot\sS_{\frac{1}{b_1},\infty}\cdot\sS_{\frac{1}{ak+b_2},\infty}
  = \sS_{\frac{1}{am+2b},\infty} \subset \sS_{\frac{1}{am+b},\infty},
\]
where we used that $b\ge0$. 
The second summand in \eqref{two_summ} is in
\[
  \sS_{\frac{1}{a(m-k-1)+b_1},\infty}\cdot\sS_{\frac{1}{ak+b_2},\infty}
  = \sS_{\frac{1}{am+b},\infty}.
\]
Hence $T_{m,k}(\lambda_0) \in \sS_{\frac{1}{am+b},\infty}$ for all
$k=0,1,\dots,m-1$.  Now Lemma~\ref{resdifflemma2} implies the validity
of \eqref{respwrdiff} for $l=m$.
\end{proof}

\subsection{Quasi boundary triples and their Weyl functions}
\label{sec:qbt}

The concept of quasi boundary triples and Weyl functions is a
generalization of the notion of (ordinary) boundary triples and
Weyl functions from \cite{Br76,DM91,GG91,Ko75}, which is a very convenient tool in extension theory of symmetric operators. 
Quasi boundary triples are particularly useful
when dealing with elliptic boundary value problems from an operator and
extension theoretic point of view.
In this subsection we provide some general facts on quasi boundary
triples which can be found in \cite{BL07} and \cite{BL11}.

Throughout this subsection let $(\cH,(\cdot,\cdot)_\cH)$ be a Hilbert
space and let $A$ be a densely defined closed symmetric operator in $\cH$.

\begin{dfn}
\label{def:qbt}
A triple $\{\cG,\Gamma_0,\Gamma_1\}$ is called a \emph{quasi boundary
triple}
for $A^*$ if $(\cG,(\cdot,\cdot)_\cG)$ is a Hilbert space and for some
linear operator $T\subset A^*$ with $\ov T  = A^*$ the following holds:
\begin{itemize}\setlength{\itemsep}{1.2ex}
\item [{\rm (i)}] $\Gamma_0,\Gamma_1:\dom T\rightarrow\cG$ are linear
mappings and $\ran\binom{\Gamma_0}{\Gamma_1}$ is dense in $\cG\times\cG$;
\item [{\rm (ii)}]  $A_0 \defeq T\upharpoonright\ker\Gamma_0$ is a self-adjoint
operator in $\cH$;
\item [{\rm (iii)}] for all $f,g\in \dom T$ the {\em abstract Green's
identity} holds:
\begin{equation}\label{green11}
  (Tf,g)_{\cH}-(f,Tg)_{\cH}
  =(\Gamma_1 f,\Gamma_0 g)_{\cG}-(\Gamma_0 f,\Gamma_1 g)_{\cG}.
\end{equation}
\end{itemize}
\end{dfn}

\medskip

\noindent

The following simple example illustrates the notion of quasi boundary triples 
for the Laplacian on a smooth bounded domain, see \cite{BL07,BL11}, Section~\ref{sec:notations} and 
Proposition~\ref{prop:qbt0}.

\begin{ex}
Let $\Omega$ be a bounded domain with smooth boundary, $A=-\Delta$
with $\dom A=H^2_0(\Omega)$, $T=-\Delta$ with $\dom T=H^2(\Omega)$,
let $\cG=L^2(\partial\Omega)$ and define the boundary mappings as
\[
  \Gamma_0f = \partial_{\nu}f|_{\partial\Omega}, \quad
  \Gamma_1f = f|_{\partial\Omega}, \qquad
  f\in\dom T;
\]
where $\partial_{\nu}$ stands for the normal derivative with normal vector pointing outwards.
It can be shown that the closure of $T$ coincides with the adjoint operator $A^*=-\Delta$, $\dom A^*=\{f\in L^2(\Omega):\Delta f\in L^2(\Omega)\}$,
and that the properties of (i)-(iii) in Definition~\ref{def:qbt} hold. Hence $\{L^2(\partial\Omega),\Gamma_0,\Gamma_1\}$ is a quasi boundary triple
for $A^*$.
\end{ex}

We remark that a quasi boundary triple for the adjoint $A^*$ of a densely defined
closed symmetric operator exists if and only if
the deficiency indices $n_\pm(A)=\dim\ker(A^*\mp i)$ of $A$ coincide.
Moreover, if  $\{\cG, \Gamma_0, \Gamma_1 \}$ is a quasi boundary triple for $A^*$,
then $A$ coincides with $T\upharpoonright(\ker\Gamma_0\cap\ker\Gamma_1)$
and the operator $A_1 \defeq T\upharpoonright\ker\Gamma_1$ is symmetric in $\cH$.
We also mention that a quasi-boundary triple with the additional property $\ran\Gamma_0 =\cG$
is a generalized boundary triple in the sense of~\cite{DHMS06,DM95}. In the special case
that the deficiency indices $n_\pm(A)$ of $A$ are finite (and coincide) a quasi
boundary triple is automatically an ordinary boundary triple.

The following proposition contains a sufficient condition for a
triple $\{\cG,\Gamma_0,\Gamma_1\}$ to be a quasi boundary triple, cf.\ \cite[Theorem~2.3]{BL07}
and \cite[Theorem~2.3]{BL11}.
The result will be applied in Sections~\ref{sec:delta} and \ref{sec:delta'}.

\begin{prop}\label{suff_cond_qbt}
Let $\cH$ and $\cG$ be Hilbert spaces and let $T$ be a linear operator in $\cH$.
Assume that $\Gamma_0,\Gamma_1\colon \dom T\rightarrow\cG$ are linear mappings such
that the following conditions are satisfied:
\begin{itemize}\setlength{\itemsep}{1.2ex}
\item[(a)]
The range of
$\,\binom{\Gamma_0}{\Gamma_1}: \dom T\rightarrow\cG\times\cG$
is dense and $\ker\Gamma_0\cap\ker\Gamma_1$ is dense in $\cH$.
\item[(b)]
The identity \eqref{green11} holds for all $ f,g\in \dom T$.
\item[(c)] $T\upharpoonright {\ker\Gamma_0}$ is an extension of a self-adjoint
operator $A_0$.
\end{itemize}
Then $A \defeq T\upharpoonright {\ker\Gamma_0\cap\ker\Gamma_1}$ is a densely defined closed
symmetric operator in $\cH$, and $\{\cG,\Gamma_0,\Gamma_1\}$ is a
quasi boundary triple for $A^*$ with $A_0=T\upharpoonright \ker\Gamma_0$.
\end{prop}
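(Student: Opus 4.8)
The plan is to establish, for the operator $A=T\upharpoonright(\ker\Gamma_0\cap\ker\Gamma_1)$ and with the given $T$ in the role it plays in Definition~\ref{def:qbt}, the three defining properties of a quasi boundary triple; the non‑routine points are the self‑adjointness of $A_0$, the closedness of $A$, and above all the identity $\ov T=A^*$. \emph{Preliminary reductions.} Applying \eqref{green11} to $f,g\in\ker\Gamma_0\cap\ker\Gamma_1$ makes the right‑hand side vanish, so $A$ is symmetric; it is densely defined by (a), hence closable with $A^*$ well defined, and applying \eqref{green11} with $f\in\dom T$, $g\in\dom A$ gives $T\subseteq A^*$, so $\ov T\subseteq A^*$. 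From the density of $\ran\binom{\Gamma_0}{\Gamma_1}$ in $\cG\times\cG$, projecting onto the first coordinate and using $\Gamma_0\upharpoonright\ker\Gamma_0=0$, one reads off that $\ran\Gamma_0$ is dense in $\cG$; this is used repeatedly. Finally, if $S$ denotes the self‑adjoint operator from (c), so $\dom S\subseteq\ker\Gamma_0$, then \eqref{green11} for $f\in\ker\Gamma_0$ and $g\in\dom S$ gives $(Tf,g)_\cH=(f,Sg)_\cH$, hence $f\in\dom S^*=\dom S$; thus $T\upharpoonright\ker\Gamma_0=S$ is self‑adjoint, call it $A_0$, so that $A\subseteq A_0$ and $i\in\rho(A_0)$.

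\emph{$A$ is closed.} Let $f_n\in\dom A$ with $f_n\to f$ and $Af_n\to g$ in $\cH$. Since $f_n\in\dom A_0$ and $A_0$ is closed, $f\in\ker\Gamma_0$ and $Tf=A_0f=g$. Applying \eqref{green11} to $f_n$ and any $h\in\dom T$ (all boundary values of $f_n$ vanish) gives $(Tf_n,h)_\cH=(f_n,Th)_\cH$, and letting $n\to\infty$ yields $(Tf,h)_\cH=(f,Th)_\cH$. Comparing this with \eqref{green11} for $f\in\dom T$ and $h$, and using $\Gamma_0f=0$, one gets $(\Gamma_1f,\Gamma_0h)_\cG=0$ for all $h\in\dom T$, so $\Gamma_1f=0$ by density of $\ran\Gamma_0$. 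Hence $f\in\dom A$ and $Af=g$, i.e.\ $A$ is closed.

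\emph{$\ov T=A^*$.} Fix $\lambda=i$. The usual von Neumann‑type argument (using that $A_0\subseteq A^*$ is self‑adjoint and $\lambda\in\rho(A_0)$) gives the direct sum decompositions $\dom A^*=\ker\Gamma_0\dotplus\ker(A^*-\lambda)$ and $\dom T=\ker\Gamma_0\dotplus\ker(T-\lambda)$, where $\ker(T-\lambda)=\ker(A^*-\lambda)\cap\dom T=:\widehat\cN_\lambda$. Since $\ker\Gamma_0=\dom A_0\subseteq\dom\ov T$, it suffices to show that $\widehat\cN_\lambda$ is dense in $\cN_\lambda:=\ker(A^*-\lambda)$ (in $\cH$, equivalently in the graph norm, which on $\cN_\lambda$ is equivalent to $\|\cdot\|_\cH$). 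So let $f\in\cN_\lambda$ with $f\perp\widehat\cN_\lambda$ and set $w:=(A_0-\bar\lambda)^{-1}f\in\dom A_0\subseteq\dom T$. For $h\in\dom T$ write $h=h_0+h_\lambda$ with $h_0\in\ker\Gamma_0$, $h_\lambda\in\widehat\cN_\lambda$, so that $(f,h)_\cH=(f,h_0)_\cH$ because $f\perp\widehat\cN_\lambda$; a short computation using $(A_0-\lambda)h_0=Th-\lambda h$, the self‑adjointness of $A_0$, and \eqref{green11} for $w$ and $h$ (with $\Gamma_0w=0$) turns this into $(\Gamma_1w,\Gamma_0h)_\cG=0$. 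Density of $\ran\Gamma_0$ then forces $\Gamma_1w=0$, so $w\in\ker\Gamma_0\cap\ker\Gamma_1=\dom A$, whence $f=(A_0-\bar\lambda)w=(A-\bar\lambda)w\in\ran(A-\bar\lambda)$. Since also $f\in\cN_\lambda=\ker(A^*-\lambda)=\ran(A-\bar\lambda)^\perp$, we get $f=0$. Thus $\widehat\cN_\lambda$ is dense in $\cN_\lambda$, so $\dom\ov T\supseteq\ker\Gamma_0\dotplus\cN_\lambda=\dom A^*$ and $\ov T=A^*$.

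\emph{Conclusion and main obstacle.} With $A$ densely defined, closed and symmetric and $\ov T=A^*$, the three conditions of Definition~\ref{def:qbt} hold: (i) is hypothesis (a), (ii) was shown, and (iii) is hypothesis (b); hence $\{\cG,\Gamma_0,\Gamma_1\}$ is a quasi boundary triple for $A^*$ with $A_0=T\upharpoonright\ker\Gamma_0$. The main obstacle is precisely the step $\ov T=A^*$: for ordinary boundary triples (finite deficiency indices) this is automatic, but here $T$ need not be closed and one has to show that its defect space $\ker(T-\lambda)$ fills up $\ker(A^*-\lambda)$, which is exactly where the density of $\ran\Gamma_0$ — and hence the joint density in hypothesis (a) — is used in an essential way; cf.\ \cite[Theorem~2.3]{BL07} and \cite[Theorem~2.3]{BL11}.
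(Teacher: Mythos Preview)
The paper does not give its own proof of this proposition; it simply refers to \cite[Theorem~2.3]{BL07} and \cite[Theorem~2.3]{BL11}. Your preliminary reductions (symmetry and density of $A$, the identification $T\upharpoonright\ker\Gamma_0=A_0$ via $S^*=S$, and density of $\ran\Gamma_0$) are correct, and your proof that $A$ is closed is fine.

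The gap is in the step $\ov T=A^*$. Carrying out the ``short computation'' you indicate gives, for $h\in\dom T$ with $h=h_0+h_\lambda$,
\[
(\Gamma_1 w,\Gamma_0 h)_\cG
=(Tw,h)_\cH-(w,Th)_\cH
=(\bar\lambda-\lambda)(w,h_\lambda)_\cH,
\]
not $0$: from $Tw=A_0w=\bar\lambda w+f$ one has $(Tw,h)=\bar\lambda(w,h)+(f,h)$, while your chain $(f,h)=(f,h_0)=(w,(A_0-\lambda)h_0)=(w,Th-\lambda h)$ yields $(w,Th)=(f,h)+\lambda(w,h)$; subtracting leaves the term $(\bar\lambda-\lambda)(w,h)$, and after using $(f,h_\lambda)=0$ one is left with $(\bar\lambda-\lambda)(w,h_\lambda)$. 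There is no reason for $w=(A_0-\bar\lambda)^{-1}f$ to be orthogonal to $\widehat\cN_\lambda$, so you cannot conclude $\Gamma_1 w=0$, and the density of $\widehat\cN_\lambda$ in $\cN_\lambda$ is not established.

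The fix is short and, in fact, recycles exactly the idea from your closedness argument: prove $T^*=A$ directly. Since $A_0\subseteq T$ and $A_0$ is self-adjoint, $T^*\subseteq A_0^*=A_0$; hence any $f\in\dom T^*$ lies in $\ker\Gamma_0$ and satisfies $T^*f=Tf$. Then for every $h\in\dom T$,
\[
0=(T^*f,h)_\cH-(f,Th)_\cH=(Tf,h)_\cH-(f,Th)_\cH=(\Gamma_1 f,\Gamma_0 h)_\cG,
\]
and density of $\ran\Gamma_0$ gives $\Gamma_1 f=0$, i.e.\ $f\in\dom A$. Thus $T^*=A$ (the reverse inclusion you already noted), and since $T\subseteq A^*$ is closable, $\ov T=T^{**}=A^*$. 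With this in place your concluding paragraph goes through unchanged.
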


Next we recall the definition of the $\gamma$-field and the Weyl
function associated with a quasi boundary triple $\{\cG, \Gamma_0,
\Gamma_1 \}$
for $A^*$.
Note first that the decomposition
\begin{equation*}
\dom T=\dom A_0\,\dot +\,\ker(T-\lambda)=\ker\Gamma_0\,\dot
+\,\ker(T-\lambda)
\end{equation*}
holds for all $\lambda\in\rho(A_0)$. Hence $\Gamma_0\upharpoonright\ker(T-\lambda)$
is invertible for all $\lambda\in\rho(A_0)$ and maps $\ker(T-\lambda)$ bijectively
onto $\ran\Gamma_0$.

\begin{dfn}\label{def:gammaWeyl}
Let $\{\cG,\Gamma_0,\Gamma_1\}$ be a quasi boundary triple for
$\overline T= A^*$ and
$A_0 = T\upharpoonright\ker\Gamma_0$.
Then the (operator-valued) functions $\gamma$ and $M$ defined by
\begin{equation*}\label{gweyl}
  \gamma(\lambda) \defeq
  \bigl(\Gamma_0\upharpoonright\ker(T-\lambda)\bigr)^{-1}
  \quad\text{and}\quad
  M(\lambda) \defeq \Gamma_1\gamma(\lambda),\quad
  \lambda\in\rho(A_0),
\end{equation*}
are called the $\gamma$\emph{-field} and the \emph{Weyl function}
corresponding to the quasi boundary triple $\{\cG,\Gamma_0,\Gamma_1\}$.
\end{dfn}

The values of the Weyl function corresponding to the quasi boundary triple $\{L^2(\partial\Omega),\Gamma_0,\Gamma_1\}$ in the
example below Definition~\ref{def:qbt} are Neumann-to-Dirichlet maps; cf. \cite{BL07,BL11}, Section~\ref{sec:notations} and 
Proposition~\ref{prop:qbt0}.


The definitions of $\gamma$ and $M$ coincide with the definitions of
the $\gamma$-field and the Weyl function in the case
that $\{\cG,\Gamma_0,\Gamma_1\}$ is an
ordinary boundary triple, cf.\ \cite{DM91}.
Note that, for each $\lambda \in \rho(A_0)$, the operator
$\gamma(\lambda)$ maps $\ran\Gamma_0$ into $\cH$ and $M(\lambda)$ maps
$\ran\Gamma_0$ into $\ran\Gamma_1$. Furthermore, as an immediate consequence of the
definition of $M(\lambda)$ we obtain
\begin{align*}\label{NDpropertyAbstr}
  M(\lambda) \Gamma_0 f_\lambda = \Gamma_1 f_\lambda, \qquad f_\lambda
  \in \ker (T - \lambda), \quad \lambda \in \rho(A_0).
\end{align*}

In the next proposition we collect some properties of the $\gamma$-field
and the Weyl function; all statements are proved in \cite{BL07}.

\begin{prop} \label{gammaprop}
Let
$\{\cG,\Gamma_0,\Gamma_1\}$ be a quasi boundary triple for $\overline T=A^*$
with $A_0=T\upharpoonright\ker\Gamma_0$, $\gamma$-field $\gamma$ and Weyl
function $M$.
Then, for $\lambda\in \rho (A_0)$, the following assertions hold.
\begin{itemize}\setlength{\itemsep}{1.2ex}
\item[(i)] 
$\gamma(\lambda)$ is a densely defined bounded operator from $\cG$ into
$\cH$ with $\dom\gamma(\lambda)=\ran\Gamma_0$.
\item[(ii)] 
The adjoint of $\gamma(\ov\lambda)$ satisfies
\begin{equation*}
\gamma(\ov\lambda)^* = \Gamma_1(A_0-\lambda)^{-1} \in\cB(\cH,\cG).
\end{equation*}
\item[(iii)] 
The values of the Weyl function $M$ are densely defined (in general unbounded) operators in
$\cG$ with $\dom M(\lambda)=\ran\Gamma_0$ and $\ran M(\lambda)\subset\ran\Gamma_1$.
Furthermore, $M(\ov \lambda) \subset M(\lambda)^*$ holds.
\item [(iv)]
If $\ran\Gamma_0 = \cG$, then $M(\lambda)\in\cB(\cG)$.
\item [(v)] If $A_1=T\upharpoonright\ker\Gamma_1$ is a self-adjoint
operator in $\cH$ and $\lambda\in\rho(A_0)\cap\rho(A_1)$, then
$M(\lambda)$ is a bijective mapping from $\ran\Gamma_0$ onto $\ran\Gamma_1$.
\end{itemize}
\end{prop}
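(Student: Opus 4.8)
The plan is to derive all five assertions from the abstract Green's identity \eqref{green11} together with the direct sum decomposition $\dom T=\ker\Gamma_0\,\dot+\,\ker(T-\lambda)$, $\lambda\in\rho(A_0)$, recorded above (and its analogue for $A_1$). The central computation is to insert into \eqref{green11} an element $f_\lambda\in\ker(T-\lambda)$ in the first argument and either a resolvent $(A_0-\ov\lambda)^{-1}h$ of $A_0$ or an eigenelement $g_{\ov\lambda}\in\ker(T-\ov\lambda)$ in the second argument; in both cases the left-hand side collapses because $T$ acts as a scalar on the eigenspaces. From this one reads off explicit identifications of $\gamma(\lambda)^*$ and $M(\lambda)^*$, and the remaining claims follow by elementary operator-theoretic arguments (closed graph theorem, disjointness of spectra).

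For (i), the domain claim $\dom\gamma(\lambda)=\ran\Gamma_0$ is immediate from the bijectivity of $\Gamma_0\restr{\ker(T-\lambda)}$ noted above, and $\ran\Gamma_0$ is dense in $\cG$ by Definition~\ref{def:qbt}\,(i). For boundedness I would take $f_\lambda\in\ker(T-\lambda)$, $h\in\cH$ and $g=(A_0-\ov\lambda)^{-1}h\in\dom A_0=\ker\Gamma_0$ in \eqref{green11}. Using $Tf_\lambda=\lambda f_\lambda$, $Tg=h+\ov\lambda g$ and $\Gamma_0 g=0$, the identity reduces to
\begin{equation*}
  (f_\lambda,h)_\cH=\bigl(\Gamma_0 f_\lambda,\Gamma_1(A_0-\ov\lambda)^{-1}h\bigr)_\cG .
\end{equation*}
Since $f_\lambda=\gamma(\lambda)\Gamma_0 f_\lambda$ and $\varphi:=\Gamma_0 f_\lambda$ runs through $\ran\Gamma_0=\dom\gamma(\lambda)$, this says $(\gamma(\lambda)\varphi,h)_\cH=(\varphi,\Gamma_1(A_0-\ov\lambda)^{-1}h)_\cG$ for all $\varphi\in\dom\gamma(\lambda)$ and all $h\in\cH$, hence $\gamma(\lambda)^*=\Gamma_1(A_0-\ov\lambda)^{-1}$ is everywhere defined on $\cH$; being an adjoint it is closed, so it is bounded by the closed graph theorem. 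Therefore $\gamma(\lambda)\subset\gamma(\lambda)^{**}\in\cB(\cG,\cH)$, which proves (i), and replacing $\lambda$ by $\ov\lambda$ in $\gamma(\lambda)^*=\Gamma_1(A_0-\ov\lambda)^{-1}$ yields (ii).

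For (iii), since $\gamma(\lambda)$ maps $\ran\Gamma_0$ into $\ker(T-\lambda)\subset\dom T=\dom\Gamma_1$, the operator $M(\lambda)=\Gamma_1\gamma(\lambda)$ has domain $\ran\Gamma_0$ (dense in $\cG$) and range in $\ran\Gamma_1$. Taking $f_\lambda\in\ker(T-\lambda)$ and $g\in\ker(T-\ov\lambda)$ in \eqref{green11}, the left-hand side vanishes, so $(\Gamma_1 f_\lambda,\Gamma_0 g)_\cG=(\Gamma_0 f_\lambda,\Gamma_1 g)_\cG$, i.e.\ $(M(\lambda)\Gamma_0 f_\lambda,\Gamma_0 g)_\cG=(\Gamma_0 f_\lambda,M(\ov\lambda)\Gamma_0 g)_\cG$; as $\Gamma_0 f_\lambda,\Gamma_0 g$ exhaust $\ran\Gamma_0=\dom M(\lambda)=\dom M(\ov\lambda)$ this means $M(\ov\lambda)\subset M(\lambda)^*$. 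For (iv), if $\ran\Gamma_0=\cG$ then $M(\lambda)$ and, by the inclusion just proved, $M(\lambda)^*$ are both everywhere defined; since $M(\lambda)$ is densely defined its adjoint is closed, hence bounded, and then $M(\lambda)=M(\lambda)^{**}\in\cB(\cG)$. For (v), let $A_1=T\restr{\ker\Gamma_1}$ be self-adjoint and $\lambda\in\rho(A_0)\cap\rho(A_1)$. Injectivity: if $M(\lambda)\Gamma_0 f_\lambda=\Gamma_1 f_\lambda=0$ with $f_\lambda\in\ker(T-\lambda)$, then $f_\lambda\in\ker(A_1-\lambda)=\{0\}$, so $\Gamma_0 f_\lambda=0$. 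Surjectivity onto $\ran\Gamma_1$: from $\dom A^*=\dom A_1\,\dot+\,\ker(A^*-\lambda)$ and $\ker(T-\lambda)=\dom T\cap\ker(A^*-\lambda)$ one gets $\dom T=\ker\Gamma_1\,\dot+\,\ker(T-\lambda)$, so any $\psi=\Gamma_1 f\in\ran\Gamma_1$ has $f=f_1+f_\lambda$ with $f_1\in\ker\Gamma_1$, $f_\lambda\in\ker(T-\lambda)$, whence $M(\lambda)\Gamma_0 f_\lambda=\Gamma_1 f_\lambda=\Gamma_1 f=\psi$ with $\Gamma_0 f_\lambda\in\ran\Gamma_0$.

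The main obstacle is the step from ``densely defined'' to ``bounded/everywhere defined'' for $\gamma(\lambda)$ in (i) and $M(\lambda)$ in (iv): this does not follow from the definitions, and the crux is that Green's identity produces the exact identifications $\gamma(\lambda)^*=\Gamma_1(A_0-\ov\lambda)^{-1}$ and the inclusion $M(\ov\lambda)\subset M(\lambda)^*$, after which the closed graph theorem finishes the argument. A secondary technical point is the decomposition $\dom T=\ker\Gamma_1\,\dot+\,\ker(T-\lambda)$ used in (v), which relies on the self-adjointness of $A_1$ through the corresponding decomposition of $\dom A^*$.
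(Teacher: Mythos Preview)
The paper does not supply its own proof of this proposition; it simply states that ``all statements are proved in \cite{BL07}'' and cites the references where quasi boundary triples were introduced. Your argument is correct and is in fact the standard route taken in that literature: insert eigenelements and resolvent images into the abstract Green's identity to obtain the formulas $\gamma(\lambda)^*=\Gamma_1(A_0-\ov\lambda)^{-1}$ and $M(\ov\lambda)\subset M(\lambda)^*$, then invoke the closed graph theorem. There is nothing to compare against in the paper itself, and your proof would serve as a self-contained replacement for the citation.
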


\medskip

\noindent
With the help of a quasi boundary triple and the associated Weyl
function it is possible to describe
the spectral properties of extensions of $A$, which are restrictions of
$T\subset A^*$. The extensions $A_\Theta$ are defined with the
help of an abstract boundary condition by
\begin{equation}\label{athetaext}
  A_\Theta \defeq T\upharpoonright\ker\bigl(\Gamma_1-\Theta\Gamma_0\bigr)
  =T\upharpoonright\ker\bigl(\Theta^{-1}\Gamma_1-\Gamma_0\bigr),
\end{equation}
where $\Theta$ is a linear operator in $\cG$ or a linear relation in $\cG$, i.e.\
a subspace of $\cG\times\cG$, cf.\ \cite{BL07}. The sums and products are
understood in the sense of linear relations
if $\Theta$ or $\Theta^{-1}$ is not a (single-valued) operator. However, for our
purposes the case that
$\Theta^{-1}$ is a bounded linear operator on $\cG$ is of particular
interest and linear relations will not be used in the following.
The next statement contains a variant of Krein's formula
in this case; see \cite[Theorem~2.8 and Theorem~4.8]{BL07},
\cite[Theorem~3.7 and Corollary~3.9]{BL11} and
\cite[Theorem~3.13]{BLL10}.

\begin{thm}
\label{thm:krein}
Let $\{\cG,\Gamma_0,\Gamma_1\}$
be a quasi boundary triple for $\overline T = A^*$ with
$A_0=T\upharpoonright\ker\Gamma_0$, $\gamma$-field $\gamma$
and Weyl function $M$. Furthermore, let $B=B^*=\Theta^{-1}\in\cB(\cG)$ and let
\begin{equation}\label{atb}
 A_\Theta=T\upharpoonright\ker\bigl(B \Gamma_1-\Gamma_0\bigr)
\end{equation}
be the corresponding extension as in \eqref{athetaext}.
Then, for $\lambda\in \rho (A_0)$, the following assertions hold.
\begin{itemize} \setlength{\itemsep}{1.2ex}
\item [(i)] $\lambda\in\sigma_p(A_\Theta)$ if and only if
$\ker(I-BM(\lambda))\not=\{0\}$.
Moreover, in this case, the multiplicity of the eigenvalue $\lambda$ of $A_\Theta$ is equal
to $\dim\ker(I-BM(\lambda))$.
\item [(ii)] For all $g\in\ran(A_\Theta-\lambda)$ and $\lambda\notin\sigma_p(A_\Theta)$ we have
\begin{equation*}
  (A_\Theta-\lambda)^{-1}g- (A_0-\lambda)^{-1}g = \gamma(\lambda)
  \bigl(I- BM(\lambda)\bigr)^{-1}B\gamma(\ov\lambda)^*g.
\end{equation*}
\end{itemize}
If, in addition, $\ran \Gamma_0 = \cG$ and
$M(\lambda_0)\in\sS_\infty(\cG)$ for some $\lambda_0\in\dC\setminus\dR$,
then the operator $A_\Theta$ in \eqref{atb} is self-adjoint in $\cH$, Krein's formula
\begin{equation}\label{krein1}
  (A_\Theta-\lambda)^{-1}- (A_0-\lambda)^{-1} = \gamma(\lambda)
  \bigl(I-BM(\lambda)\bigr)^{-1}B\gamma(\ov\lambda)^*
\end{equation}
holds for all $\lambda\in\rho(A_\Theta)\cap\rho(A_0)$,
and $(I- BM(\lambda))^{-1}\in\cB(\cG)$.
\end{thm}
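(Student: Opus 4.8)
The plan is to verify the three conclusions of Theorem~\ref{thm:krein} in turn, treating parts (i) and (ii) first on the set of $\lambda\in\rho(A_0)$ where the inverse $(I-BM(\lambda))^{-1}$ exists as an operator from $\ran\Gamma_1$ back to $\ran\Gamma_0$, and then using the extra hypotheses $\ran\Gamma_0=\cG$ and $M(\lambda_0)\in\sS_\infty(\cG)$ to upgrade everything to an honest resolvent identity with $A_\Theta$ self-adjoint. Throughout I would work with the decomposition $\dom T=\dom A_0\,\dot+\,\ker(T-\lambda)$ from Definition~\ref{def:gammaWeyl}, so that any $f\in\dom T$ splits as $f=f_0+f_\lambda$ with $\Gamma_0 f=\Gamma_0 f_\lambda$ and $f_\lambda=\gamma(\lambda)\Gamma_0 f$, and I would repeatedly use the defining relation $M(\lambda)\Gamma_0 f_\lambda=\Gamma_1 f_\lambda$ together with the identity $\gamma(\ov\lambda)^*=\Gamma_1(A_0-\lambda)^{-1}$ from Proposition~\ref{gammaprop}(ii).

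For part (i): an element $f\in\ker(A_\Theta-\lambda)$ is the same as $f\in\ker(T-\lambda)$ with $B\Gamma_1 f=\Gamma_0 f$; writing $f=\gamma(\lambda)\varphi$ with $\varphi=\Gamma_0 f\in\ran\Gamma_0$, the boundary condition becomes $BM(\lambda)\varphi=\varphi$, i.e. $\varphi\in\ker(I-BM(\lambda))$, and since $\gamma(\lambda)$ is injective on $\ran\Gamma_0$ this is a bijection between $\ker(A_\Theta-\lambda)$ and $\ker(I-BM(\lambda))$, which gives both the eigenvalue criterion and the multiplicity statement. For part (ii): given $g\in\ran(A_\Theta-\lambda)$ with $\lambda\notin\sigma_p(A_\Theta)$, set $f\defeq(A_\Theta-\lambda)^{-1}g$ and $u\defeq(A_0-\lambda)^{-1}g$, so $f-u\in\ker(T-\lambda)$ and hence $f-u=\gamma(\lambda)\Gamma_0(f-u)=\gamma(\lambda)\Gamma_0 f$ because $\Gamma_0 u=0$. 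Now apply $\Gamma_1$: using $\Gamma_1 u=\Gamma_1(A_0-\lambda)^{-1}g=\gamma(\ov\lambda)^*g$, the boundary condition $B\Gamma_1 f=\Gamma_0 f$, and $\Gamma_1(f-u)=M(\lambda)\Gamma_0 f$, one gets $B\gamma(\ov\lambda)^*g + BM(\lambda)\Gamma_0 f=B\Gamma_1 f=\Gamma_0 f$, so $(I-BM(\lambda))\Gamma_0 f=B\gamma(\ov\lambda)^*g$, and therefore $\Gamma_0 f=(I-BM(\lambda))^{-1}B\gamma(\ov\lambda)^*g$, which plugged into $f-u=\gamma(\lambda)\Gamma_0 f$ is exactly the claimed formula. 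At this stage I should record that $I-BM(\lambda)$ is injective on $\ran\Gamma_0$ precisely when $\lambda\notin\sigma_p(A_\Theta)$, by part (i).

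For the last paragraph of the theorem I would invoke the structural results from \cite{BL07,BL11}: under $\ran\Gamma_0=\cG$ the triple is a generalized boundary triple, $M(\lambda)\in\cB(\cG)$ for all $\lambda\in\rho(A_0)$ by Proposition~\ref{gammaprop}(iv), and $M$ is an operator-valued Nevanlinna function with $\Im M(\lambda)\ge 0$ for $\lambda$ in the upper half-plane and boundedly invertible imaginary part there when $\gamma(\lambda)$ is injective; combined with the assumption $M(\lambda_0)\in\sS_\infty(\cG)$ and $B=B^*\in\cB(\cG)$, analytic Fredholm theory applied to $\lambda\mapsto I-BM(\lambda)$ shows that $(I-BM(\lambda))^{-1}\in\cB(\cG)$ for all $\lambda$ outside a discrete set, and self-adjointness of $A_\Theta$ together with the identification of $\sigma_p(A_\Theta)$ from part (i) shows that this discrete set is contained in $\sigma(A_\Theta)$, so that $(I-BM(\lambda))^{-1}\in\cB(\cG)$ for all $\lambda\in\rho(A_\Theta)\cap\rho(A_0)$. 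Then $\ran(A_\Theta-\lambda)=\cH$ and part (ii) applies to every $g\in\cH$, yielding Krein's formula~\eqref{krein1}. I expect the genuinely delicate point to be the self-adjointness of $A_\Theta$ and the bounded invertibility of $I-BM(\lambda)$ on the whole resolvent set rather than merely off a discrete exceptional set — this is where one really needs the abstract machinery of \cite{BL07,BL11} (the Nevanlinna property of $M$, the compactness of $M(\lambda_0)$, and the general theory of boundary pairs) rather than the elementary manipulations that suffice for (i) and (ii); everything else is a bookkeeping exercise with the Green identity and the decomposition of $\dom T$.
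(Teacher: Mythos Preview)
The paper does not prove Theorem~\ref{thm:krein}; it is stated with references to \cite[Theorems~2.8 and 4.8]{BL07}, \cite[Theorem~3.7 and Corollary~3.9]{BL11} and \cite[Theorem~3.13]{BLL10}, so there is no in-paper argument to compare against. Your derivations of (i) and (ii) are correct and are exactly the standard manipulations with the decomposition $\dom T=\dom A_0\dot+\ker(T-\lambda)$, the identity $\gamma(\ov\lambda)^*=\Gamma_1(A_0-\lambda)^{-1}$, and Green's identity; nothing is missing there.

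For the final paragraph your outline for self-adjointness is right: symmetry of $A_\Theta$ follows from Green's identity and $B=B^*$, and then for the fixed non-real $\lambda_0$ the Fredholm alternative (injectivity from (i) plus compactness of $BM(\lambda_0)$) gives $(I-BM(\lambda_0))^{-1}\in\cB(\cG)$, after which one checks directly that $f\defeq(A_0-\lambda_0)^{-1}g+\gamma(\lambda_0)(I-BM(\lambda_0))^{-1}B\gamma(\ov\lambda_0)^*g$ lies in $\dom A_\Theta$ with $(A_\Theta-\lambda_0)f=g$, yielding $\ran(A_\Theta-\lambda_0)=\cH$ and likewise at $\ov\lambda_0$. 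The one genuine gap in your sketch is the appeal to analytic Fredholm theory for \emph{all} $\lambda\in\rho(A_0)$: that argument requires $BM(\lambda)$ to be compact for every such $\lambda$, and this does not follow automatically from compactness at the single point $\lambda_0$, since the difference formula $M(\lambda)-M(\lambda_0)=(\lambda-\lambda_0)\gamma(\ov\lambda)^*\gamma(\lambda_0)$ is a product of two merely bounded operators. You flag this honestly as the ``genuinely delicate point'' requiring the machinery of \cite{BL07,BL11}, which is appropriate given that the paper itself defers to those references; but you should be aware that the analytic Fredholm step as written is not self-contained.
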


\subsection{Sobolev spaces, traces and Green's identities}
\label{sec:fspaces}

Throughout this paper Sobolev spaces and certain interpolation spaces
play an important role. In this subsection we provide some necessary
definitions and basic properties. The reader is referred, e.g.\ to the
monographs \cite{AF03,GBook08,LM72,McL00} for more details.

Let $\Omega \subset \dR^n$ be an arbitrary bounded or unbounded domain
with a compact $C^\infty$-boundary $\partial\Omega)$.
By $H^s(\Omega)$ and $H^s(\partial\Omega)$, $s\in\RR$, we denote the
standard ($L^2$-based) Sobolev spaces of order $s$ of functions in $\Omega$
and $\partial\Omega$, respectively. The inner product and norm on $H^s$
are denoted by $(\cdot,\cdot)_s$ and $\Vert\cdot\Vert_s$, for $s=0$
we simply write $(\cdot,\cdot)$ and $\Vert\cdot\Vert$, respectively.
In order to avoid possible confusion, sometimes also the space is used as
an index, e.g.\ $(\cdot,\cdot)_{L^2(\Omega)}$ and
$(\cdot,\cdot)_{L^2(\partial\Omega)}$. The Sobolev spaces
of order $k\in\dN_0$ of $L^\infty$-functions on $\Omega$
and $\partial\Omega$ are denoted by
$W^{k,\infty}(\Omega)$ and $W^{k,\infty}(\partial\Omega)$, respectively.
The following well-known implications will be used later:
\begin{equation}
\begin{alignedat}{2}
\label{W1Hk}
  & f \in H^k(\Omega),\, g\in W^{k,\infty}(\Omega) &\quad
  &\Longrightarrow\quad fg\in H^k(\Omega),\qquad k\in\dN_0; \\[0.5ex]
  & h \in H^1(\partial\Omega),\, k\in W^{1,\infty}(\partial\Omega) &\quad
  &\Longrightarrow \quad hk\in H^1(\partial\Omega).
\end{alignedat}
\end{equation}

For a function $f$ on $\Omega$ we denote by $f\vert_{\partial \Omega}$
and $\partial_\nu f|_{\partial\Omega}$
the trace and the trace of the normal derivative (with normal vector pointing outwards),
respectively. For $s > 3/2$ the trace mapping
\begin{equation}
\label{Sobtrace}
  H^s(\Omega) \ni f\mapsto \bigl\{f|_{\partial \Omega}, \partial_\nu
  f|_{\partial\Omega}\bigr\}
  \in H^{s-1/2}(\partial\Omega)\times H^{s-3/2}(\partial\Omega)
\end{equation}
is the continuous extension of the trace mapping defined on
$C^\infty$-functions. Recall that for $s > 3/2$ the mapping~\eqref{Sobtrace}
is surjective onto $H^{s-1/2}(\partial\Omega)\times
H^{s-3/2}(\partial\Omega)$.

Besides the Sobolev spaces $H^s(\Omega)$ the spaces
\begin{equation*}
  H^s_\Delta(\Omega) \defeq \bigl\{f\in H^s(\Omega)\colon \Delta f\in
  L^2(\Omega)\bigr\}, \quad s\ge 0,
\end{equation*}
equipped with the inner product $(\cdot,\cdot)_s+
(\Delta\,\cdot\,,\Delta\,\cdot)$ and corresponding norm will be useful.
Observe that
for $s \ge  2$ the spaces $H^s_\Delta(\Omega)$ and $H^s(\Omega)$
coincide. We also note that $H^s_\Delta(\Omega)$, $s\in (0,2)$, can
be viewed as an interpolation space between $H^2(\Omega)$ and
$H^0_\Delta(\Omega)$, where the latter space coincides with
the maximal domain of the Laplacian in $L^2(\Omega)$. By \cite{LM72} the
trace mapping can be extended to a continuous mapping
\begin{equation}
\label{LMtrace}
  H^s_\Delta(\Omega) \ni f\mapsto \bigl\{f|_{\partial\Omega},
  \partial_\nu f|_{\partial\Omega} \bigr\}\in
  H^{s-1/2}(\partial\Omega)\times H^{s-3/2}(\partial\Omega)
\end{equation}
for all $s \in [0,2)$, where each of  the mappings
\begin{equation*}
\begin{split}
  &H^s_{\Delta}(\Omega) \ni f\mapsto f|_{\partial\Omega}\in
  H^{s-1/2}(\partial\Omega), \\
  &H^s_\Delta(\Omega) \ni f\mapsto \partial_\nu f|_{\partial\Omega}\in
  H^{s-3/2}(\partial\Omega)
\end{split}
\end{equation*}
is surjective for $s\in [0,2)$.
We also recall
that the first and second Green's identities hold for all $f,g\in H^{3/2}_\Delta(\Omega)$
and $h\in H^1(\Omega)$:
\begin{equation}
\label{half_green}
  \bigl(-\Delta f,h\bigr)_{L^2(\Omega)}  = \bigl(\nabla f,\nabla
  h\bigr)_{L^2(\Omega;\dC^n)} -
  \bigl(\partial_\nu f|_{\partial\Omega},
  h|_{\partial\Omega}\bigr)_{L^2(\partial\Omega)}
\end{equation}
and
\begin{equation}
\label{green_identity}
\begin{split}
&  \bigl(-\Delta f,g\bigr)_{L^2(\Omega)} - \bigl(f,-\Delta
  g\bigr)_{L^2(\Omega)} \\
&\qquad\qquad\qquad\qquad = \bigl(f|_{\partial\Omega},
  \partial_\nu
  g|_{\partial\Omega}\bigr)_{L^2(\partial\Omega)} - \bigl(\partial_\nu f|_{\partial\Omega},
  g|_{\partial\Omega}\bigr)_{L^2(\partial\Omega)},
\end{split}
\end{equation}
cf.\ \cite{Fr67,LM72} and, e.g.\ \cite[Theorem~4.2]{BLL10}.

\subsection{Some local Sobolev spaces}
\label{sec:locspaces}

Let $\Sigma$ be a compact connected $C^\infty$-hypersurface which
separates the Euclidean space $\dR^n$ into a bounded (interior) domain
$\Omega_{\rm i}$ and an unbounded (exterior) domain $\Omega_{\rm e}$. In
particular, $\Sigma=\partial\Omega_{\rm i}=\partial\Omega_{\rm e}$.
For $s \ge 0$ we use the short notation
\begin{equation}\label{opsops}
  H^s(\dR^n\backslash\Sigma) \defeq H^s(\Omega_{\rm i})\oplus H^s(\Omega_{\rm e})\quad\text{and}\quad
  H^s_\Delta(\dR^n\backslash\Sigma) \defeq H^s_\Delta(\Omega_{\rm i})\oplus H^s_\Delta(\Omega_{\rm e}).
\end{equation}

We denote by $H^s_{\Sigma}(\Omega_{\rm i})$ with $s\ge 0$ the subspace
of $L^2(\Omega_{\rm i})$ which consists of functions that belong to
$H^s$ in a neighbourhood of $\Sigma=\partial\Omega_{\rm i}$, i.e.\
\begin{equation*}
\begin{split}
  H^s_{\Sigma}(\Omega_{\rm i}) \defequ \bigl\{f\in L^2(\Omega_{\rm i})\colon
  \exists~\text{domain}~\Omega'\subset \Omega_{\rm i}~\text{such that}~\\
&\qquad\qquad\qquad\qquad\qquad\qquad  \partial\Omega' \supset
  \Sigma~\text{and}~f\upharpoonright\Omega' \in H^s(\Omega') \bigr\}.
\end{split}
\end{equation*}
The space $H^s_{\Sigma}(\Omega_{\rm e})$ is defined in the same way with
$\Omega_{\rm i}$ replaced by $\Omega_{\rm e}$. The local Sobolev spaces
$H^s_{\Sigma}(\dR^n)$ and $H^s_{\Sigma}(\dR^n\backslash\Sigma)$ in the
next definition consist of $L^2$-functions which are $H^s$ in a
neighbourhood of $\Sigma$ or in both one-sided neighbourhoods of $\Sigma$, respectively.

\begin{dfn}\label{deflocsob}
Let $\Sigma$, $\Omega_{\rm i}$, $\Omega_{\rm e}$, and the spaces
$H^s_{\Sigma}(\Omega_{\rm i})$ and $H^s_{\Sigma}(\Omega_{\rm e})$,
$s\geq 0$,
be as above. Then we define
\begin{equation*}
\begin{split}
 H^s_{\Sigma}(\dR^n) \defequ \big\{f\in L^2(\dR^n)\colon
  \exists~\text{domain}~\Omega'\subset \dR^n~\text{such that}\\
&\qquad\qquad\qquad\qquad\qquad\qquad \Omega'\supset
  \Sigma ~\text{and}~f\upharpoonright\Omega' \in H^s(\Omega') \big\}, \\[0.5ex]
 H^s_{\Sigma}(\dR^n\backslash\Sigma)\defequ H^s_\Sigma(\Omega_{\rm i})
  \oplus H^s_\Sigma(\Omega_{\rm e}).
\end{split}
\end{equation*}
\end{dfn}

It follows from the above definition that
$H^s_{\Sigma}(\dR^n)\subsetneq H^s_{\Sigma}(\dR^n\backslash\Sigma)$ holds for all $s>0$.

\medskip

For $k\in\dN_0$ we denote by $W^{k,\infty}_{\Sigma}(\Omega_{\rm i})$ the
subspace of $L^\infty(\Omega_{\rm i})$ which consists of functions that
belong to $W^{k,\infty}$ in a neighbourhood of $\Sigma=\partial\Omega_{\rm i}$, i.e.\
\begin{equation*}
\begin{split}
  W^{k,\infty}_{\Sigma}(\Omega_{\rm i}) \defequ \bigl\{ f\in
  L^\infty(\Omega_{\rm i})\colon
  \exists~\text{domain}~\Omega'\subset\Omega_{\rm i}~\text{such that}\\
&\qquad\qquad\qquad\qquad\qquad\qquad\partial\Omega'
  \supset\Sigma~\text{and}~f\upharpoonright\Omega'\in W^{k,\infty}(\Omega')\bigr\}.
\end{split}
\end{equation*}
The space $W^{k,\infty}_{\Sigma}(\Omega_{\rm e})$ is defined in the same
way with $\Omega_{\rm i}$ replaced by $\Omega_{\rm e}$.
In analogy to Definition~\ref{deflocsob} we introduce the local Sobolev
spaces $W^{k,\infty}_{\Sigma}(\dR^n)$ and
$W^{k,\infty}_\Sigma(\dR^n\backslash\Sigma)$ of $L^\infty$-functions
which belong to $W^{k,\infty}$ in a neighbourhood or both one-sided
neighbourhoods of $\Sigma$, respectively.

\begin{dfn}
Let $\Sigma$, $\Omega_{\rm i}$, $\Omega_{\rm e}$, and the spaces
$W^{k,\infty}_{\Sigma}(\Omega_{\rm i})$ and $W^{k,\infty}_{\Sigma}(\Omega_{\rm e})$,
$k\in\dN_0$, be as above. Then we define
\begin{equation*}
\begin{split}
  W^{k,\infty}_{\Sigma}(\dR^n) \defequ \bigl\{ f\in L^\infty(\dR^n)\colon
  \exists~\text{domain}~\Omega'\subset\dR^n~\text{such that} \\
&\qquad\qquad\qquad\qquad\qquad\qquad\Omega'\supset\Sigma~\text{and}~f\upharpoonright\Omega'\in
  W^{k,\infty}(\Omega')\bigr\}, \\[0.5ex]
  W^{k,\infty}_\Sigma(\dR^n\backslash\Sigma) \defequ
  W^{k,\infty}_\Sigma(\Omega_{\rm i})\times W^{k,\infty}_\Sigma(\Omega_{\rm e}).
\end{split}
\end{equation*}
\end{dfn}

Finally, we recall a well-known result about the $\sS_{p,\infty}$ property
of bounded operators mapping into the Sobolev space $H^{q_2}(\Sigma)$,
where $q_2>0$ and $\Sigma=\partial\Omega_{\rm i}=\partial\Omega_{\rm e}$
is the $(n-1)$-dimensional compact connected
$C^\infty$-hypersurface from above, cf.\ \cite{G96} and \cite[Lemma~4.6]{BLL10}.

\begin{lem}\label{le.s_emb}
Let $\cK$ be a Hilbert space, $B\in\cB(\cK,L^2(\Sigma))$ and let $q_2
> q_1 \ge  0$. If $\ran B \subset H^{q_2}(\Sigma)$, then $B$ belongs to
the class $\sS_{\frac{n-1}{q_2 - q_1},\infty}(\cK, H^{q_1}(\Sigma))$.
\end{lem}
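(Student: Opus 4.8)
The plan is to reduce the claim to a known mapping property of the trace operator combined with elementary properties of Sobolev embeddings on compact manifolds and the ideal property of $\sS_{p,\infty}$. First I would recall that $\Sigma$ is a compact $C^\infty$ manifold of dimension $n-1$, so the embedding $\iota_{q_2,q_1}\colon H^{q_2}(\Sigma)\hookrightarrow H^{q_1}(\Sigma)$ is compact whenever $q_2>q_1\ge 0$, and — this is the key quantitative input — its singular values satisfy $s_k(\iota_{q_2,q_1})=O\bigl(k^{-(q_2-q_1)/(n-1)}\bigr)$ as $k\to\infty$. This is the standard Weyl-type asymptotics for the eigenvalues of an elliptic (pseudodifferential) operator of order $q_1-q_2<0$ on a closed $(n-1)$-manifold; concretely one may take the operator $(1+\Delta_\Sigma)^{-(q_2-q_1)/2}$, whose eigenvalues behave like $\mu_k^{-(q_2-q_1)/2}$ with $\mu_k\sim c\,k^{2/(n-1)}$ by Weyl's law for the Laplace--Beltrami operator. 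Hence $\iota_{q_2,q_1}\in\sS_{\frac{n-1}{q_2-q_1},\infty}\bigl(H^{q_2}(\Sigma),H^{q_1}(\Sigma)\bigr)$.

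Next I would exploit the hypothesis $\ran B\subset H^{q_2}(\Sigma)$. Regarding $B$ as a map into $H^{q_2}(\Sigma)$, the closed graph theorem shows $B\in\cB(\cK,H^{q_2}(\Sigma))$: indeed if $x_j\to x$ in $\cK$ and $Bx_j\to y$ in $H^{q_2}(\Sigma)$, then $Bx_j\to y$ also in $L^2(\Sigma)$, while boundedness of $B$ into $L^2(\Sigma)$ forces $Bx_j\to Bx$ in $L^2(\Sigma)$, so $y=Bx$ and the graph of $B\colon\cK\to H^{q_2}(\Sigma)$ is closed. Then the original operator $B\colon\cK\to H^{q_1}(\Sigma)$ factorizes as $B=\iota_{q_2,q_1}\circ \widetilde B$, where $\widetilde B\in\cB(\cK,H^{q_2}(\Sigma))$ is $B$ with range regarded in $H^{q_2}(\Sigma)$. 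Since $\sS_{p,\infty}$ is a two-sided operator ideal, composing the $\sS_{\frac{n-1}{q_2-q_1},\infty}$ operator $\iota_{q_2,q_1}$ with the bounded operator $\widetilde B$ yields $B\in\sS_{\frac{n-1}{q_2-q_1},\infty}\bigl(\cK,H^{q_1}(\Sigma)\bigr)$, which is exactly the assertion.

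The main obstacle is the first step: pinning down the singular-value asymptotics $s_k(\iota_{q_2,q_1})=O(k^{-(q_2-q_1)/(n-1)})$ for all real $q_2>q_1\ge 0$, not just integer orders. For integer or even half-integer Sobolev exponents one can appeal directly to Weyl asymptotics for powers of $1+\Delta_\Sigma$; for general real exponents one uses that $(1+\Delta_\Sigma)^{s}$ is a positive elliptic pseudodifferential operator of order $2s$ on the closed manifold $\Sigma$, whose spectral counting function obeys the sharp Weyl law $N(\lambda)\sim c_\Sigma\lambda^{(n-1)/(2s)}$, and that $H^{q}(\Sigma)=\dom\bigl((1+\Delta_\Sigma)^{q/2}\bigr)$ with equivalent norms. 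This is precisely the content cited from \cite{G96}; granting it, the rest is just the ideal property and the factorization above, so no further delicate estimates are needed.
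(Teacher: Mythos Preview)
Your proof is correct. The paper itself does not prove this lemma; it merely states it as a well-known result with references to \cite{G96} and \cite[Lemma~4.6]{BLL10}, and your argument --- factoring $B$ through the Sobolev embedding $H^{q_2}(\Sigma)\hookrightarrow H^{q_1}(\Sigma)$ via the closed graph theorem and invoking the Weyl asymptotics $s_k=O(k^{-(q_2-q_1)/(n-1)})$ for that embedding on the closed $(n-1)$-manifold $\Sigma$ --- is exactly the standard proof behind those citations.
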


\section{Self-adjoint Schr\"odinger operators with
interactions on hypersurfaces}
\label{sec:3}

In this section we define the Schr\"odinger operators with $\delta$ and $\delta'$-interactions on hypersurfaces
with the help of quasi boundary triple techniques. These definitions coincide with the ones in the introduction
and are compatible with those for one-dimensional $\delta$-point interactions from \cite{AGHH05,AK99}
and the definition of $\delta$-interactions on manifolds via quadratic forms; see, e.g.\
\cite{BEKS94, EK03, EY02a, EY04, KV07}.
We also determine the semi-bounded closed quadratic form which corresponds to
the Schr\"odinger
operator with a $\delta'$-interaction on a hypersurface, which answers a question
from \cite{E08} posed by P.~Exner.
As a byproduct of the quasi boundary triple approach we obtain variants of
Krein's formula and the Birman--Schwinger principle.
This section contains the complete proofs of Theorem~A and Theorem~B from the introduction.

\subsection{Notations and preliminary facts}
\label{sec:notations}

Let $\Sigma$ be  a compact connected $C^\infty$-hypersurface
which separates the Euclidean space $\dR^n$, $n\geq 2$,
into a bounded (interior) domain $\Omega_{\rm i}$ and an unbounded
(exterior) domain $\Omega_{\rm e}$ with the common boundary
$\partial\Omega_{\rm i} = \partial\Omega_{\rm e} = \Sigma$.
Let
\begin{equation}
\label{Schrodinger}
  \cL = -\Delta + V,
\end{equation}
where $V$ is a real-valued potential from $L^\infty(\dR^n)$.
The restrictions of $\cL$ to the interior and exterior
domains will be denoted, respectively, by 
\[
\cL_{\rm i} = \cL\upharpoonright\Omega_{\rm i}\quad\text{and}\quad \cL_{\rm e}= \cL\upharpoonright\Omega_{\rm e}.
\]
For a function $f \in L^2(\dR^n)$ we write $f = f_{\rm i} \oplus f_{\rm e}$, where
$f_{\rm i} = f\upharpoonright\Omega_{\rm i}$ and
$f_{\rm e} = f\upharpoonright\Omega_{\rm e}$. Let us denote by $(\cdot,\cdot)$,
$(\cdot,\cdot)_{\rm i}$, $(\cdot,\cdot)_{\rm e}$ and $(\cdot,\cdot)_\Sigma$ the
inner products in the Hilbert spaces $L^2(\dR^n)$, $L^2(\Omega_{\rm i})$,
$L^2(\Omega_{\rm e})$ and $L^2(\Sigma)$, respectively. When it is clear
from the context, we denote the inner products in the Hilbert spaces $L^2(\dR^n;\dC^n)$,
$L^2(\Omega_{\rm i}; \dC^n)$, and $L^2(\Omega_{\rm e}; \dC^n)$
of vector-valued functions also by  $(\cdot,\cdot)$, $(\cdot,\cdot)_{\rm i}$ and
$(\cdot,\cdot)_{\rm e}$, respectively.

The minimal operators associated with the
differential expressions $\cL_{\rm i}$ and $\cL_{\rm e}$  are defined by
\begin{equation*}
\label{symop}
\begin{alignedat}{2}
  A_{\rm i} f_{\rm i} &= \cL_{\rm i} f_{\rm i},\qquad &
  \dom A_{\rm i} &= H^2_0(\Omega_{\rm i}),\\
  A_{\rm e} f_{\rm e} &= \cL_{\rm e} f_{\rm e},\qquad &
  \dom A_{\rm e} &= H^2_0(\Omega_{\rm e}).
\end{alignedat}
\end{equation*}
The operators $A_{\rm i}$ and $A_{\rm e}$ are densely defined closed symmetric
operators with infinite deficiency indices in
$L^2(\Omega_{\rm i})$ and $L^2(\Omega_{\rm e})$, respectively.
Hence their direct sum
\begin{equation}\label{aie}
  A_{\rm i,e} = A_{\rm i} \oplus A_{\rm e},\qquad \dom A_{\rm i,e}
  = H^2_0(\Omega_{\rm i})\oplus H^2_0(\Omega_{\rm e}),
\end{equation}
is a densely defined closed symmetric operator with infinite deficiency indices in the space
$L^2(\dR^n)=L^2(\Omega_{\rm i})\oplus L^2(\Omega_{\rm e})$.
Furthermore, we introduce the operators
\begin{equation*}
\label{opT}
\begin{alignedat}{2}
  T_{\rm i} f_{\rm i} &= \cL_{\rm i} f_{\rm i},
  \qquad & \dom T_{\rm i} &= H^{3/2}_\Delta(\Omega_{\rm i}),\\
  T_{\rm e} f_{\rm e} &= \cL_{\rm e} f_{\rm e},\qquad & \dom T_{\rm e} &= H^{3/2}_\Delta(\Omega_{\rm e}),
\end{alignedat}
\end{equation*}
and their direct sum
\[
  T_{\rm i,e} = T_{\rm i}\oplus T_{\rm e},\qquad
  \dom T_{\rm i,e} = H^{3/2}_\Delta(\dR^n\backslash\Sigma),
\]
where the notation in \eqref{opsops} is used. It can be shown that
$A_{\rm i}^* = \ov T_{\rm i}$, $A_{\rm e}^* = \ov T_{\rm e}$,
and hence $A_{\rm i,e}^* = \ov T_{\rm i,e}$.
Next we define the usual self-adjoint Dirichlet and Neumann realizations of the differential
expressions $\cL_{\rm i}$ and $\cL_{\rm e}$ in $L^2(\Omega_{\rm i})$
and $L^2(\Omega_{\rm e})$, respectively:
\begin{equation*}
\label{ADAN}
\begin{alignedat}{2}
  A_{\rm D, i}f_{\rm i} &= \cL_{\rm i} f_{\rm i},\qquad &
    \dom A_{\rm D, i} &= \bigl\{f_{\rm i}\in H^2(\Omega_{\rm i})\colon f_{\rm i}|_\Sigma= 0 \bigr\},\\
  A_{\rm D, e}f_{\rm e} &= \cL_{\rm e} f_{\rm e},\qquad &
    \dom A_{\rm D, e} &= \bigl\{f_{\rm e}\in H^2(\Omega_{\rm e})\colon
    f_{\rm e}|_\Sigma= 0 \bigr\},\\
  A_{\rm N, i}f_{\rm i} &= \cL_{\rm i} f_{\rm i},\qquad &
    \dom A_{\rm N, i} &= \bigl\{f_{\rm i}\in H^2(\Omega_{\rm i})\colon
    \partial_{\nu_{\rm i}}f_{\rm i}|_\Sigma = 0 \bigr\},\\
  A_{\rm N, e}f_{\rm e} &= \cL_{\rm e} f_{\rm e},\qquad &
    \dom A_{\rm N, e} &= \bigl\{f_{\rm e}\in H^2(\Omega_{\rm e})\colon
     \partial_{\nu_{\rm e}}f_{\rm e}|_\Sigma = 0 \bigr\},
\end{alignedat}
\end{equation*}
and their direct sums
\begin{equation}
\label{ADANoplus1}
\begin{alignedat}{2}
 A_{\rm D, i,e } &= A_{\rm D,i}\oplus A_{\rm D,e},\\
 \dom A_{\rm D, i,e} &= \bigl\{f\in H^2(\dR^n\backslash\Sigma):
  f_{\rm i}|_\Sigma = f_{\rm e}|_\Sigma = 0 \bigr\},
\end{alignedat}
\end{equation}
and
\begin{equation}
\label{ADANoplus2}
\begin{alignedat}{2}
   A_{\rm N, i,e } &= A_{\rm N,i}\oplus A_{\rm N,e},\\
    \dom A_{\rm N, i,e} &= \bigl\{f\in H^2(\dR^n\backslash\Sigma):
    \partial_{\nu_{\rm i}}f_{\rm i}|_\Sigma =
 \partial_{\nu_{\rm e}}f_{\rm e}|_\Sigma = 0 \bigr\},
\end{alignedat}
\end{equation}
which are self-adjoint operators in $L^2(\dR^n)$.
Finally, we denote the usual self-adjoint (free) realization of $\cL$ in $L^2(\dR^n)$ by
\begin{equation}
\label{Afree}
  A_{\rm free} f = \cL f,\qquad \dom A_{\rm free} = H^2(\dR^n).
\end{equation}

In the next proposition we define quasi boundary triples
for $A_{\rm i}^*$ and $A_{\rm e}^*$, and recall some properties of
the associated $\gamma$-fields and Weyl functions; see \cite[Proposition~4.6]{BL07}
and \cite[Theorem~4.2]{BLL10}. For brevity we discuss the interior case $j={\rm i}$
and the exterior case $j={\rm e}$ simultaneously.

\begin{prop}
\label{prop:qbt0}
Let $A_{\rm i}$,  $A_{\rm e}$, $T_{\rm i}$, $T_{\rm e}$, $A_{\rm D, i}$,
$A_{\rm D, e}$, $A_{\rm N,i}$ and $A_{\rm N,e}$ be as above.
Then the following statements hold for $j={\rm i}$ and $j={\rm e}$.
\begin{itemize}\setlength{\itemsep}{1.2ex}
\item[\rm (i)]
The triple
$\Pi_j = \{L^2(\Sigma), \Gamma_{0,j},\Gamma_{1,j}\}$, where
\begin{equation*}
  \Gamma_{0,j} f_j = \partial_{\nu_j} f_j|_\Sigma, \quad
  \Gamma_{1,j} f_j = f_j|_\Sigma, \quad f_j\in \dom T_j=H^{3/2}_\Delta(\Omega_j),
\end{equation*}
is a quasi boundary triple for $A^*_j$.
The restrictions of $T_j$ to the kernels of the boundary
mappings are the Neumann and Dirichlet operators:
\begin{equation*}
  T_j \upharpoonright \ker \Gamma_{0,j} = A_{{\rm N},j}, \quad
  T_j \upharpoonright \ker \Gamma_{1,j} = A_{{\rm D},j};
\end{equation*}
the ranges of the boundary mappings are
\[
  \ran \Gamma_{0,j} = L^2(\Sigma)
  \quad\text{and}\quad
  \ran \Gamma_{1,j} = H^1(\Sigma).
\]

\item[\rm (ii)]
For $\lambda\in\rho(A_{{\rm N}, j})$ and $\varphi \in L^2(\Sigma)$ the
boundary value problem
\begin{equation}
\label{problemie}
\begin{split}
(\cL_j -\lambda)f_{ j} = 0, \qquad  \partial_{\nu_j}f_{j}|_\Sigma = \varphi,
\end{split}
\end{equation}
has the unique solution $\gamma_j(\lambda)\varphi\in H^{3/2}_\Delta(\Omega_j)$, where $\gamma_j$
is the $\gamma$-field associated with $\Pi_j$. Moreover, $\gamma_j(\lambda)$ is bounded from  $L^2(\Sigma)$
into $L^2(\Omega_j)$.

\item[\rm (iii)]
For $\lambda\in\rho(A_{{\rm N},j})$ the Weyl function $M_j$ associated with $\Pi_j$ is given by\[
  M_j(\lambda)\varphi =f_{j}|_\Sigma, \qquad
  \varphi \in L^2(\Sigma),
\]
where $f_{j}=\gamma_j(\lambda)\varphi$ is the solution of \eqref{problemie}. The operators $M_j(\lambda)$
are bounded
from $L^2(\Sigma)$ to $H^1(\Sigma)$ and compact in $L^2(\Sigma)$.
If, in addition, $\lambda\in\rho(A_{{\rm D},j})$, then $M_j(\lambda)$ is a bijective
map from $L^2(\Sigma)$ onto $H^1(\Sigma)$.
\end{itemize}
\end{prop}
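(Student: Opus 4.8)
The plan is to establish Proposition~\ref{prop:qbt0} by verifying the hypotheses of Proposition~\ref{suff_cond_qbt} for the triple $\Pi_j$ and then reading off the Weyl function and $\gamma$-field. First I would treat the two cases $j={\rm i}$ and $j={\rm e}$ simultaneously, writing $\Omega=\Omega_j$, $A=A_j$, $T=T_j$. The key input is the trace theory for the space $H^{3/2}_\Delta(\Omega)$ recalled in \eqref{LMtrace}: the map $f\mapsto\{f|_\Sigma,\partial_\nu f|_\Sigma\}$ from $H^{3/2}_\Delta(\Omega)$ into $H^1(\Sigma)\times L^2(\Sigma)$ is continuous, and each component map is surjective (onto $H^1(\Sigma)$, resp.\ $L^2(\Sigma)$). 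For condition (a): surjectivity of the normal-derivative trace onto $L^2(\Sigma)$ gives $\ran\Gamma_{0,j}=L^2(\Sigma)$, and combining with the Dirichlet-trace surjectivity plus the fact that $H^2_0(\Omega)\subset\dom T$ shows $\ran\binom{\Gamma_{0,j}}{\Gamma_{1,j}}$ is dense in $L^2(\Sigma)\times L^2(\Sigma)$ (indeed it contains $L^2(\Sigma)\times H^1(\Sigma)$); density of $\ker\Gamma_{0,j}\cap\ker\Gamma_{1,j}=H^2_0(\Omega)$ in $L^2(\Omega)$ is standard. Condition (b) — the abstract Green identity \eqref{green11} — is precisely the second Green identity \eqref{green_identity} valid on $H^{3/2}_\Delta(\Omega)$, after matching $\Gamma_{1,j}f=f|_\Sigma$ with ``Dirichlet data'' and $\Gamma_{0,j}f=\partial_\nu f|_\Sigma$ with ``Neumann data'' (one checks the signs: the right side of \eqref{green_identity} is $(f|_\Sigma,\partial_\nu g|_\Sigma)-(\partial_\nu f|_\Sigma,g|_\Sigma)=(\Gamma_1f,\Gamma_0g)-(\Gamma_0f,\Gamma_1g)$, as required). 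Condition (c) holds because $A_{{\rm N},j}$ is self-adjoint and is the restriction of $T_j$ to $\ker\Gamma_{0,j}$ by definition of the Neumann realization. Proposition~\ref{suff_cond_qbt} then yields that $\Pi_j$ is a quasi boundary triple for $A_j^*=\ov{T_j}$ with $A_0=A_{{\rm N},j}$, and the identification $T_j\upharpoonright\ker\Gamma_{1,j}=A_{{\rm D},j}$ is immediate from the definition of the Dirichlet realization (its domain is exactly $\{f\in H^{3/2}_\Delta(\Omega):f|_\Sigma=0\}$, which equals $H^2_\Sigma\cap\{\dots\}$; in fact elliptic regularity gives $H^2(\Omega)$ here).

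For part (ii), fix $\lambda\in\rho(A_{{\rm N},j})$. The definition of the $\gamma$-field (Definition~\ref{def:gammaWeyl}) gives $\gamma_j(\lambda)=\bigl(\Gamma_{0,j}\upharpoonright\ker(T_j-\lambda)\bigr)^{-1}$, so $\gamma_j(\lambda)\varphi$ is by construction the unique element of $\ker(T_j-\lambda)=\{f\in H^{3/2}_\Delta(\Omega_j):(\cL_j-\lambda)f=0\}$ with $\partial_{\nu_j}f|_\Sigma=\varphi$, which is exactly the boundary value problem \eqref{problemie}; uniqueness follows from $\lambda\in\rho(A_{{\rm N},j})$ since a solution with zero Neumann data would lie in $\dom A_{{\rm N},j}$. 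Boundedness of $\gamma_j(\lambda)$ from $L^2(\Sigma)$ into $L^2(\Omega_j)$ is Proposition~\ref{gammaprop}(i) (here $\dom\gamma_j(\lambda)=\ran\Gamma_{0,j}=L^2(\Sigma)$, so it is everywhere defined and bounded). For part (iii), $M_j(\lambda)=\Gamma_{1,j}\gamma_j(\lambda)$ by definition, i.e.\ $M_j(\lambda)\varphi=\bigl(\gamma_j(\lambda)\varphi\bigr)|_\Sigma=f_j|_\Sigma$ with $f_j$ the solution of \eqref{problemie} — a Neumann-to-Dirichlet map. Boundedness from $L^2(\Sigma)$ into $H^1(\Sigma)$ comes from composing the bounded $\gamma_j(\lambda):L^2(\Sigma)\to H^{3/2}_\Delta(\Omega_j)$ (boundedness into this finer space, not just $L^2$, follows from the closed graph theorem together with continuity of $(\cL_j-\lambda)$ and the trace) with the continuous trace $H^{3/2}_\Delta(\Omega_j)\to H^1(\Sigma)$ from \eqref{LMtrace}; compactness in $L^2(\Sigma)$ then follows from the compact embedding $H^1(\Sigma)\hookrightarrow L^2(\Sigma)$ (Rellich, since $\Sigma$ is a compact $C^\infty$ manifold). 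Finally, if also $\lambda\in\rho(A_{{\rm D},j})$, then $A_{{\rm D},j}=T_j\upharpoonright\ker\Gamma_{1,j}$ is self-adjoint and $\lambda\in\rho(A_{{\rm N},j})\cap\rho(A_{{\rm D},j})$, so Proposition~\ref{gammaprop}(v) gives that $M_j(\lambda)$ is a bijection from $\ran\Gamma_{0,j}=L^2(\Sigma)$ onto $\ran\Gamma_{1,j}=H^1(\Sigma)$.

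The main obstacle I anticipate is establishing the trace-theoretic facts precisely in the scale $H^{3/2}_\Delta(\Omega)$ rather than $H^2(\Omega)$ — in particular the surjectivity of the two one-sided trace maps onto $H^1(\Sigma)$ and $L^2(\Sigma)$ and the validity of Green's identity \eqref{green_identity} for $f,g\in H^{3/2}_\Delta(\Omega)$. These are exactly the points that require the Lions--Magenes-type results \cite{LM72} cited in Section~\ref{sec:fspaces} and the approximation/interpolation arguments behind \eqref{LMtrace}, together with the mapping-into-the-finer-space statement for $\gamma_j(\lambda)$ needed for the $H^1(\Sigma)$-boundedness of $M_j(\lambda)$. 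Since all of these are available from the preliminaries (\eqref{half_green}, \eqref{green_identity}, \eqref{LMtrace}, and Propositions~\ref{suff_cond_qbt} and \ref{gammaprop}), the proof is mostly a careful bookkeeping of ranges and sign conventions; the exterior case requires no extra work beyond noting that the trace results hold for unbounded domains with compact boundary.
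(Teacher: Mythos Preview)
The paper does not give its own proof of Proposition~\ref{prop:qbt0}; it simply cites \cite[Proposition~4.6]{BL07} and \cite[Theorem~4.2]{BLL10}. Your approach --- verifying the hypotheses of Proposition~\ref{suff_cond_qbt} via the trace theory \eqref{LMtrace} and Green's identity \eqref{green_identity}, then reading off the $\gamma$-field and Weyl function from Definition~\ref{def:gammaWeyl} and Proposition~\ref{gammaprop} --- is exactly the standard argument and mirrors almost verbatim the paper's proof of the closely related Proposition~\ref{prop:qbt1}, so it is correct and in the intended spirit.

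One small imprecision: your parenthetical claim that $\ran\binom{\Gamma_{0,j}}{\Gamma_{1,j}}$ \emph{contains} $L^2(\Sigma)\times H^1(\Sigma)$ asserts joint surjectivity of the two traces, which is stronger than what is needed and not immediate from the separate surjectivity statements after \eqref{LMtrace}. For condition~(a) of Proposition~\ref{suff_cond_qbt} only density is required, and this follows more directly (as in the paper's proof of Proposition~\ref{prop:qbt1}) from the $H^2$-trace theorem \eqref{Sobtrace}, which shows that the range already contains $H^{1/2}(\Sigma)\times H^{3/2}(\Sigma)$. Also, for the identifications $T_j\upharpoonright\ker\Gamma_{0,j}=A_{{\rm N},j}$ and $T_j\upharpoonright\ker\Gamma_{1,j}=A_{{\rm D},j}$ you correctly note that elliptic regularity is needed to pass from $H^{3/2}_\Delta$ with vanishing trace to $H^2$; this is routine but worth stating explicitly.
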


\noindent
The operators $M_{\rm i}(\lambda)$ and $M_{\rm e}(\lambda)$ in Proposition~\ref{prop:qbt0}\,(iii) are
the Neumann-to-Dirichlet maps associated with the
differential expressions $\cL_{\rm i}-\lambda$ and $\cL_{\rm e} -\lambda$, respectively.

\subsection{Schr\"odinger operators with $\delta$-interactions on
hypersurfaces: self-adjoint\-ness,  Krein's formula and $H^2$-regularity}
\label{sec:delta}

In this section we make use of quasi boundary triples to define and study the Schr\"odinger operator
$A_{\delta,\alpha}$ associated with the formal differential expression
$\cL_{\delta,\alpha}=-\Delta+V - \alpha\langle\delta_\Sigma,\cdot\,\rangle\,\delta_\Sigma$ in \eqref{expressions1}.
It is convenient to use the symmetric extension
\begin{equation}
\label{wtA}
  \wt A \defeq   A_{\rm free} \cap A_{\rm D, i, e}=
  \cL\upharpoonright\big\{f\in H^2(\dR^n)\colon f_{\rm i}|_{\Sigma} = f_{\rm e}|_{\Sigma} = 0\big\}
\end{equation}
of the orthogonal sum $A_{\rm i,e}$ in \eqref{aie} as the underlying symmetric operator
for the quasi boundary triple. Furthermore,
\begin{equation}
\label{wtT}
  \wt T  \defeq T_{\rm i,e} \upharpoonright
  \bigl\{f_{\rm i} \oplus f_{\rm e} \in  H^{3/2}_\Delta(\dR^n\backslash\Sigma)\colon
  f_{\rm i}|_\Sigma = f_{\rm e}|_\Sigma\bigr\}.
\end{equation}
acts as the operator on whose domain boundary mappings are defined in the next proposition.
The method of intermediate extensions is inspired by the general considerations for
ordinary boundary triples in \cite[Section~4]{DHMS00}. We remark that the quasi
boundary triple and Weyl function below appear also implicitly in \cite{AP04}
and \cite[Section~4]{R09} in a different context.

\begin{prop}
\label{prop:qbt1}
Let the operators $\wt A$, $\wt T$, $A_{\rm D, i,e}$, $A_{\rm N,i,e}$ and $A_{\rm free}$
be as in~\eqref{wtA}, \eqref{wtT}, \eqref{ADANoplus1}, \eqref{ADANoplus2} and \eqref{Afree}, respectively, and
let $M_{\rm i}$ and $M_{\rm e}$ be the Weyl functions from Proposition~\ref{prop:qbt0}.
Then the following statements hold.
\begin{itemize}\setlength{\itemsep}{1.2ex}
\item[\rm (i)]
The triple $\wt\Pi = \{L^2(\Sigma), \wt\Gamma_0,\wt\Gamma_1\}$, where
\begin{equation*}
  \wt\Gamma_0 f =
  \partial_{\rm \nu_{\rm e}} f_{\rm e}|_\Sigma+ \partial_{\rm \nu_{\rm i}} f_{\rm i}|_\Sigma,
  \quad
  \wt\Gamma_1f = f|_\Sigma,
  \qquad
  f=f_{\rm i}\oplus f_{\rm e} \in\dom\wt T,
\end{equation*}
is a quasi boundary triple for $\wt A^*$.
The restrictions of $\wt T$ to the kernels of the boundary
mappings are
\[
  \wt T \upharpoonright \ker\wt \Gamma_0 = A_{\rm free}
  \quad\text{and}\quad
  \wt T \upharpoonright \ker\wt \Gamma_1 = A_{\rm D,i,e},
\]
and the ranges of the boundary mappings are
\begin{equation}
\label{ran_wtG}
  \ran \wt \Gamma_0 = L^2(\Sigma)
  \quad\text{and}\quad
  \ran \wt \Gamma_1 = H^1(\Sigma).
\end{equation}

\item[\rm (ii)]
For $\lambda\in\rho(A_{\rm free})$ and $\varphi\in L^2(\Sigma)$ the
transmission problem
\begin{equation}
\label{transm_i32}
  (\cL-\lambda) f = 0,
  \qquad  f_{\rm e}|_\Sigma =  f_{\rm i}|_\Sigma, \quad \partial_{\nu_{\rm e}} f_{\rm e}|_\Sigma
  + \partial_{\nu_{\rm i}} f_{\rm i}|_\Sigma = \varphi,
\end{equation}
has the unique solution $\wt\gamma(\lambda)\varphi\in H^{3/2}_\Delta(\dR^n\backslash\Sigma)$,
where $\wt\gamma$ is the $\gamma$-field
associated with $\wt\Pi$.  Moreover, $\wt\gamma(\lambda)$ is bounded from
$L^2(\Sigma)$ to $L^2(\dR^n)$.

\item[\rm (iii)]
For $\lambda\in\rho(A_{\rm free})$ the values $\wt M(\lambda)$ of the Weyl function
associated with $\wt\Pi$ are bounded operators from $L^2(\Sigma)$ to $H^1(\Sigma)$
and compact operators in $L^2(\Sigma)$.
If, in addition, $\lambda\in\rho(A_{\rm D,i,e})$, then $\wt M(\lambda)$ is a bijective map from
$L^2(\Sigma)$ onto $H^1(\Sigma)$.
Moreover,
the identity
\begin{equation}
\label{weyl1}
  \wt M(\lambda) = \bigl(M_{\rm i}(\lambda)^{-1}+M_{\rm e}(\lambda)^{-1}\bigr)^{-1}
\end{equation}
holds for all
$\lambda\in\rho(A_{\rm free})\cap\rho(A_{\rm D,i,e})\cap\rho(A_{\rm N,i,e})$.
\end{itemize}
\end{prop}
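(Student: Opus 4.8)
The plan is to verify the three conditions of Proposition~\ref{suff_cond_qbt} for the triple $\wt\Pi$, then read off the description of $\wt T\upharpoonright\ker\wt\Gamma_0$ and $\wt T\upharpoonright\ker\wt\Gamma_1$, and finally analyse the $\gamma$-field and the Weyl function using the known data for $\Pi_{\rm i}$ and $\Pi_{\rm e}$ from Proposition~\ref{prop:qbt0}. For (i): the density of $\ker\wt\Gamma_0\cap\ker\wt\Gamma_1$ is clear since this kernel equals $\dom\wt A$, which contains $H^2_0(\Omega_{\rm i})\oplus H^2_0(\Omega_{\rm e})=\dom A_{\rm i,e}$, dense in $L^2(\dR^n)$. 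For the abstract Green identity one simply adds the two half-Green identities \eqref{green_identity} for $\Omega_{\rm i}$ and $\Omega_{\rm e}$ applied to $f_{\rm i},g_{\rm i}$ and $f_{\rm e},g_{\rm e}$; using the transmission condition $f_{\rm i}|_\Sigma=f_{\rm e}|_\Sigma$, $g_{\rm i}|_\Sigma=g_{\rm e}|_\Sigma$ valid on $\dom\wt T$, the boundary terms collapse exactly to $(\wt\Gamma_1 f,\wt\Gamma_0 g)_\Sigma-(\wt\Gamma_0 f,\wt\Gamma_1 g)_\Sigma$. The self-adjointness needed in (c) is that $\wt T\upharpoonright\ker\wt\Gamma_0\supset A_{\rm free}$: if $f\in H^2(\dR^n)$ then $f_{\rm i}|_\Sigma=f_{\rm e}|_\Sigma$ and $\partial_{\nu_{\rm e}}f_{\rm e}|_\Sigma+\partial_{\nu_{\rm i}}f_{\rm i}|_\Sigma=0$ because the two outward normals are opposite, so $f\in\ker\wt\Gamma_0$; and $A_{\rm free}$ is self-adjoint by \eqref{Afree}. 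Hence Proposition~\ref{suff_cond_qbt} applies and $\wt\Pi$ is a quasi boundary triple for $\wt A^*$ with $\wt T\upharpoonright\ker\wt\Gamma_0=A_{\rm free}$. The identity $\wt T\upharpoonright\ker\wt\Gamma_1=A_{\rm D,i,e}$ follows since $\ker\wt\Gamma_1$ forces $f_{\rm i}|_\Sigma=f_{\rm e}|_\Sigma=0$ with $f_{\rm i}\in H^{3/2}_\Delta(\Omega_{\rm i})$, $f_{\rm e}\in H^{3/2}_\Delta(\Omega_{\rm e})$, and for such functions elliptic regularity up to the boundary gives $f_{\rm i}\in H^2(\Omega_{\rm i})$, $f_{\rm e}\in H^2(\Omega_{\rm e})$ (this is exactly the statement that $H^{3/2}_\Delta$ with zero Dirichlet trace lands in $H^2$), matching \eqref{ADANoplus1}.

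For the ranges in \eqref{ran_wtG}: clearly $\ran\wt\Gamma_1\subset H^1(\Sigma)$ by the trace property \eqref{LMtrace} for $H^{3/2}_\Delta$, and $\ran\wt\Gamma_0\subset L^2(\Sigma)$ trivially. Surjectivity onto $H^1(\Sigma)$ and $L^2(\Sigma)$ respectively is obtained by solving, for given $\varphi\in L^2(\Sigma)$, the transmission problem \eqref{transm_i32} for some fixed $\lambda\in\rho(A_{\rm free})$; the existence of a solution $f\in H^{3/2}_\Delta(\dR^n\backslash\Sigma)$ of this problem is part (ii), and then $\wt\Gamma_0 f=\varphi$ while, by prescribing instead the Dirichlet data, one gets $\wt\Gamma_1$ surjective onto $H^1(\Sigma)$ using that $M_{\rm i}(\lambda),M_{\rm e}(\lambda)$ map onto $H^1(\Sigma)$. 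So part (ii) has to be proved first and is used throughout.

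For part (ii), existence and uniqueness of the solution of \eqref{transm_i32}: uniqueness is immediate since the difference of two solutions lies in $\ker(\wt T-\lambda)$ with vanishing $\wt\Gamma_0$-data, hence in $\ker(A_{\rm free}-\lambda)=\{0\}$ for $\lambda\in\rho(A_{\rm free})$. For existence, the natural route is to reduce to the one-sided Neumann problems of Proposition~\ref{prop:qbt0}\,(ii): write the candidate solution as $f_{\rm i}=\gamma_{\rm i}(\lambda)\psi_{\rm i}$, $f_{\rm e}=\gamma_{\rm e}(\lambda)\psi_{\rm e}$ with unknown Neumann data $\psi_{\rm i},\psi_{\rm e}\in L^2(\Sigma)$; then the second condition in \eqref{transm_i32} reads $\psi_{\rm i}+\psi_{\rm e}=\varphi$ (since $\partial_{\nu_j}f_j|_\Sigma=\psi_j$) and the transmission condition $f_{\rm i}|_\Sigma=f_{\rm e}|_\Sigma$ reads $M_{\rm i}(\lambda)\psi_{\rm i}=M_{\rm e}(\lambda)\psi_{\rm e}$. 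To solve this $2\times 2$ system one needs $M_{\rm i}(\lambda)$ and $M_{\rm e}(\lambda)$ to be invertible, which holds for $\lambda\in\rho(A_{\rm N,i,e})\cap\rho(A_{\rm D,i,e})$; then $\psi_{\rm i}=M_{\rm i}(\lambda)^{-1}(M_{\rm i}(\lambda)^{-1}+M_{\rm e}(\lambda)^{-1})^{-1}\varphi$, provided the operator $M_{\rm i}(\lambda)^{-1}+M_{\rm e}(\lambda)^{-1}$ (from $H^1(\Sigma)$ to $L^2(\Sigma)$) is boundedly invertible. This is where the main work lies: one must show this sum operator is a bijection onto $L^2(\Sigma)$, for which I would combine the bijectivity of each $M_j(\lambda)^{-1}:H^1(\Sigma)\to L^2(\Sigma)$ with a sign/positivity argument — e.g.\ passing to a real $\lambda$ below the spectrum of $A_{\rm free}$ where $M_{\rm i}(\lambda)^{-1}$ and $M_{\rm e}(\lambda)^{-1}$ are, up to sign, nonnegative self-adjoint Dirichlet-to-Neumann type operators whose sum is coercive, and then extending to general $\lambda\in\rho(A_{\rm free})$ by analytic Fredholm theory using uniqueness. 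Once $\wt\gamma(\lambda)$ is constructed this way, its boundedness from $L^2(\Sigma)$ into $L^2(\dR^n)$ follows from that of $\gamma_{\rm i}(\lambda),\gamma_{\rm e}(\lambda)$ and of the bounded solution operators for the $\psi_j$.

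Finally, part (iii): by definition $\wt M(\lambda)=\wt\Gamma_1\wt\gamma(\lambda)$, i.e.\ $\wt M(\lambda)\varphi=f_{\rm i}|_\Sigma=f_{\rm e}|_\Sigma$. From the computation above, $f_{\rm i}|_\Sigma=M_{\rm i}(\lambda)\psi_{\rm i}=M_{\rm i}(\lambda)M_{\rm i}(\lambda)^{-1}(M_{\rm i}(\lambda)^{-1}+M_{\rm e}(\lambda)^{-1})^{-1}\varphi=(M_{\rm i}(\lambda)^{-1}+M_{\rm e}(\lambda)^{-1})^{-1}\varphi$, which is precisely \eqref{weyl1}, valid on $\rho(A_{\rm free})\cap\rho(A_{\rm D,i,e})\cap\rho(A_{\rm N,i,e})$ where all the inverses make sense. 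Boundedness of $\wt M(\lambda):L^2(\Sigma)\to H^1(\Sigma)$ and compactness in $L^2(\Sigma)$ follow from $\ran\wt M(\lambda)\subset\ran\wt\Gamma_1=H^1(\Sigma)$ together with the compact embedding $H^1(\Sigma)\hookrightarrow L^2(\Sigma)$ (alternatively from \eqref{weyl1} and the corresponding properties of $M_{\rm i}(\lambda),M_{\rm e}(\lambda)$), and bijectivity onto $H^1(\Sigma)$ for $\lambda\in\rho(A_{\rm D,i,e})$ is Proposition~\ref{gammaprop}\,(v) applied with $A_1=A_{\rm D,i,e}=\wt T\upharpoonright\ker\wt\Gamma_1$. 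The one genuine obstacle is the bounded invertibility of $M_{\rm i}(\lambda)^{-1}+M_{\rm e}(\lambda)^{-1}$ described above; everything else is bookkeeping built on Propositions~\ref{suff_cond_qbt}, \ref{gammaprop} and \ref{prop:qbt0}.
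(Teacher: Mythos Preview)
Your handling of the Green identity, the identification of $\wt T\upharpoonright\ker\wt\Gamma_j$, and the derivation of \eqref{weyl1} is essentially the same as the paper's. The genuine divergence is in how you obtain $\ran\wt\Gamma_0=L^2(\Sigma)$ and part~(ii), and here the paper takes a much shorter route that completely bypasses the obstacle you identify.

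You propose to construct $\wt\gamma(\lambda)$ by hand, writing $f_j=\gamma_j(\lambda)\psi_j$ and solving the $2\times 2$ system $\psi_{\rm i}+\psi_{\rm e}=\varphi$, $M_{\rm i}(\lambda)\psi_{\rm i}=M_{\rm e}(\lambda)\psi_{\rm e}$; this forces you to prove that $M_{\rm i}(\lambda)^{-1}+M_{\rm e}(\lambda)^{-1}:H^1(\Sigma)\to L^2(\Sigma)$ is boundedly invertible, which you correctly flag as the hard step. The paper never touches this operator for that purpose. Instead it proves $\ran\wt\Gamma_0=L^2(\Sigma)$ \emph{directly} using the single-layer potential: for $\varphi\in L^2(\Sigma)$ one sets $f=\chi\,\mathrm{SL}\varphi$ with $\mathrm{SL}$ the single-layer potential for $-\Delta+1$ and $\chi\in C_0^\infty(\dR^n)$ a cut-off that is $\equiv 1$ near $\overline{\Omega_{\rm i}}$; the standard jump relations give $f\in\dom\wt T$ and $\partial_{\nu_{\rm i}}f_{\rm i}|_\Sigma+\partial_{\nu_{\rm e}}f_{\rm e}|_\Sigma=\varphi$. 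Once $\ran\wt\Gamma_0=L^2(\Sigma)$ is known, Proposition~\ref{suff_cond_qbt} applies, and then part~(ii) is \emph{automatic}: $\wt\gamma(\lambda)=(\wt\Gamma_0\upharpoonright\ker(\wt T-\lambda))^{-1}$ exists with domain $\ran\wt\Gamma_0=L^2(\Sigma)$ and is bounded by Proposition~\ref{gammaprop}\,(i), for every $\lambda\in\rho(A_{\rm free})$. No invertibility of a sum of Dirichlet-to-Neumann maps is needed, and the restriction to $\rho(A_{\rm N,i,e})\cap\rho(A_{\rm D,i,e})$ that your construction imposes for (ii) disappears. Similarly, $\ran\wt\Gamma_1=H^1(\Sigma)$ follows in the paper simply from surjectivity of the one-sided Dirichlet traces $H^{3/2}_\Delta(\Omega_j)\to H^1(\Sigma)$, without invoking $M_j(\lambda)$. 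Your route can be made to work (your coercivity-plus-analytic-Fredholm sketch is plausible), but it is considerably more laborious than the layer-potential shortcut.
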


\begin{proof}
(i)
First note that the boundary mappings $\wt\Gamma_0$, $\wt\Gamma_1$ are well defined
because of the properties of the trace mappings \eqref{LMtrace}.
We show that the triple $\wt\Pi$ satisfies the conditions (a), (b) and (c)
in Proposition~\ref{suff_cond_qbt}.
For condition (a), let $\varphi\in H^{1/2}(\Sigma)$ and $\psi \in H^{3/2}(\Sigma)$
be arbitrary.
By~\eqref{Sobtrace} there exist $f_{\rm i}\in H^2(\Omega_{\rm i})$ and
$f_{\rm e}\in H^2(\Omega_{\rm e})$ such that
\[
  \partial_{\nu_{\rm i}}f_{\rm i}|_\Sigma = \varphi, \qquad
  f_{\rm i}|_\Sigma = \psi, \qquad
  \partial_{\nu_{\rm e}}f_{\rm e}|_\Sigma = 0, \qquad
  f_{\rm e}|_\Sigma = \psi.
\]
Since $H^2(\dR^n\backslash\Sigma) \subset H^{3/2}_\Delta(\dR^n\backslash\Sigma)$,
we have $f \defeq f_{\rm i}\oplus f_{\rm e}\in\dom\wt T$ and $\wt\Gamma_0f = \varphi$,
$\wt \Gamma_1f = \psi$. Hence
\[
   H^{1/2}(\Sigma)\times H^{3/2}(\Sigma)\subset
  \ran \biggl(\,\begin{matrix} \wt\Gamma_0 \\ \wt\Gamma_1 \end{matrix}\biggr),
\]
which implies that the first item in (a) of Proposition~\ref{suff_cond_qbt} is satisfied; the second item is clear.
Next let $f = f_{\rm i} \oplus f_{\rm e}$ and $g = g_{\rm i} \oplus g_{\rm e}$ be
two arbitrary functions in $\dom \wt T$. From Green's identity~\eqref{green_identity}
we obtain the following two equalities:
\begin{equation*}
\label{eq:green1}
\begin{split}
  (T_{\rm i} f_{\rm i},g_{\rm i})_{\rm i} - ( f_{\rm i},T_{\rm i}g_{\rm i})_{\rm i}
  &= \bigl(f_{\rm i}|_\Sigma, \partial_{\nu_{\rm i}} g_{\rm i}|_\Sigma\bigr)_\Sigma
  -\bigl(\partial_{\nu_{\rm i}} f_{\rm i}|_\Sigma, g_{\rm i}|_\Sigma\bigr)_\Sigma, \\[0.5ex]
  (T_{\rm e} f_{\rm e},g_{\rm e})_{\rm e} - ( f_{\rm e},T_{\rm e}g_{\rm e})_{\rm e}
  &= \bigl(f_{\rm e}|_\Sigma, \partial_{\nu_{\rm e}} g_{\rm e}|_\Sigma\bigr)_\Sigma -
  \bigl(\partial_{\nu_{\rm e}} f_{\rm e}|_\Sigma, g_{\rm e}|_\Sigma\bigr)_\Sigma.
\end{split}
\end{equation*}
Since the functions  $f$ and $g$ in $\dom \wt T$ satisfy the boundary conditions
$f_{\rm i}|_\Sigma = f_{\rm e}|_\Sigma = f|_\Sigma$ and
$g_{\rm i}|_\Sigma = g_{\rm e}|_\Sigma=g|_\Sigma$, we have
\begin{equation}
\label{green1}
\begin{split}
  \bigl(\wt T f,g\bigr) - \bigl(f,\wt T g\bigr)
  &= (T_{\rm i} f_{\rm i},g_{\rm i})_{\rm i}
  -(f_{\rm i},T_{\rm i}g_{\rm i})_{\rm i}
  +(T_{\rm e} f_{\rm e},g_{\rm e})_{\rm e}
  -(f_{\rm e},T_{\rm e}g_{\rm e})_{\rm e} \\[0.5ex]
  &= \bigl(f|_\Sigma,\partial_{\nu_{\rm i}}g_{\rm i}|_\Sigma
  +\partial_{\nu_{\rm e}}g_{\rm e}|_\Sigma\bigr)_\Sigma
  -\bigl(\partial_{\nu_{\rm i}}f_{\rm i}|_\Sigma+\partial_{\nu_{\rm e}}f_{\rm e}|_\Sigma,
  g|_\Sigma\bigr)_\Sigma,
\end{split}
\end{equation}
which shows that condition (b) of Proposition~\ref{suff_cond_qbt} is fulfilled.
Since the restriction $\wt T\upharpoonright\ker \wt\Gamma_0$ contains the self-adjoint
operator $A_{\rm free}$, also condition (c) is satisfied.
Hence we can apply Proposition~\ref{suff_cond_qbt}, which implies that
$\wt T\upharpoonright(\ker\wt \Gamma_0\cap \ker\wt \Gamma_1)$ is a
densely defined closed symmetric operator, that the triple
$\wt \Pi = \{L^2(\Sigma),\wt \Gamma_0,\wt\Gamma_1\}$ is a quasi boundary triple
for its adjoint and that $A_{\rm free} = \wt T\upharpoonright\ker \wt\Gamma_0$.
Note that the operator $\wt T\upharpoonright\ker\wt \Gamma_1$ is symmetric by \eqref{green1}
and it contains the self-adjoint operator $A_{\rm D,i,e}$.
Therefore these operators also coincide.
Hence we get
\[
\wt T\upharpoonright(\ker\wt \Gamma_0\cap \ker\wt \Gamma_1)=
\big(\wt T\upharpoonright\ker\wt \Gamma_0\big)\cap \big(\wt T\upharpoonright \ker\wt \Gamma_1\big)
= A_{\rm free}\cap A_{\rm D,i,e} = \wt A.
\]

Since, for $j={\rm i}$ and $j={\rm e}$, the mapping $f_j\mapsto f_j|_\Sigma$
is surjective from $H^{3/2}_\Delta(\Omega_j)$ onto $H^1(\Sigma)$ and
the mapping $f_j\mapsto \partial_{\nu_{\rm j}}f_j|_\Sigma$ is surjective
from $H^{3/2}_\Delta(\Omega_j)$ onto $L^2(\Sigma)$, it follows
easily that $\ran\wt\Gamma_1=H^1(\Sigma)$ and that $\ran \wt\Gamma_0 \subset L^2(\Sigma)$.
In order to see that $\wt \Gamma_0$ maps surjectively onto $L^2(\Sigma)$,
let us fix an arbitrary $\chi\in C^\infty_0(\dR^n)$ such that $\chi\equiv1$ on
an open neighbourhood of $\overline{\Omega_{\rm i}}$.
Let ${\rm SL}$ be the single-layer potential associated with the hypersurface $\Sigma$ and
the differential expression $- \Delta + 1$;
see, e.g.\ \cite[Chapter~6]{McL00} for the definition and properties of single-layer potentials.
By \cite[Theorem~6.11, Theorem~6.12\,(i)]{McL00}, for an arbitrary $\varphi\in L^2(\Sigma)$,
the function $f \defeq \chi {\rm  SL}\varphi$ belongs to $\dom \wt T$ and satisfies the condition
\[
  \partial_{\nu_{\rm e}}f_{\rm e}|_\Sigma + \partial_{\nu_{\rm i}}f_{\rm i}|_\Sigma = \varphi,
\]
hence $\wt \Gamma_0f = \varphi$, and thus $\ran\wt \Gamma_0 = L^2(\Sigma)$.

(ii) For $\lambda\in\rho(A_{\rm free})$ the $\gamma$-field $\widetilde\gamma(\lambda)$
associated with the quasi boundary triple $\wt\Pi$ maps $\ran\wt\Gamma_0=L^2(\Sigma)$
onto $\ker(\wt T-\lambda)$ by Definition~\ref{def:gammaWeyl}
and Proposition~\ref{gammaprop}\,(i). Hence $f=f_{\rm i}\oplus f_{\rm e}\defeq \wt\gamma(\lambda)\varphi$
satisfies $(\cL-\lambda)f=0$, $f\in H^{3/2}(\dR^n\backslash\Sigma)$
and $f_{\rm i}\vert_\Sigma = f_{\rm e}\vert_\Sigma$. Furthermore,
\[
  \varphi = \wt\Gamma_0 \wt\gamma(\lambda)\varphi
  = \wt\Gamma_0 f
  = \partial_{\rm \nu_{\rm e}} f_{\rm e}|_\Sigma+ \partial_{\rm \nu_{\rm i}} f_{\rm i}|_\Sigma
\]
and hence $f=\wt\gamma(\lambda)\varphi$ is the unique solution of the problem \eqref{transm_i32}.

(iii)
Definition~\ref{def:gammaWeyl}, Proposition~\ref{gammaprop}\,(iv) and\,(v)
and \eqref{ran_wtG} imply that $\wt M(\lambda)$ is a bounded operator from
$L^2(\Sigma)$ into $H^1(\Sigma)$ for $\lambda\in\rho(A_{\rm free})$ and
that it is bijective for $\lambda\in\rho(A_{\rm free})\cap\rho(A_{\rm D,i,e})$.
The compactness of $\wt M(\lambda)$ in $L^2(\Sigma)$ is a consequence of
the compactness of the embedding of $H^1(\Sigma)$ into $L^2(\Sigma)$;
see, e.g.\ \cite[Theorem~7.10]{W87}.

In order to prove the identity \eqref{weyl1}, let
$\lambda\in\rho(A_{\rm free})\cap\rho(A_{\rm D,i,e})\cap\rho(A_{\rm N,i,e})$.
For such $\lambda$ the operator $\wt M(\lambda)$ is invertible, and the same holds true
for $M_{\rm i}(\lambda)$ and $M_{\rm e}(\lambda)$; cf.\ Proposition~\ref{prop:qbt0}.
If $\wt M(\lambda)\varphi=\psi$ for some $\varphi\in L^2(\Sigma)$ and $\psi\in H^1(\Sigma)$,
then there exists an $f=f_{\rm i}\oplus f_{\rm e}\in\ker(\wt T-\lambda)$ such that
\[
  \wt\Gamma_0 f = \varphi \qquad\text{and}\qquad \wt\Gamma_1 f = \psi.
\]
As $f_{\rm i}\in\ker(T_{\rm i}-\lambda)$ and $f_{\rm e}\in\ker(T_{\rm e}-\lambda)$,
we have
\begin{align*}
  \Gamma_{0,\rm i}f_{\rm i} &= M_{\rm i}(\lambda)^{-1}\Gamma_{1,\rm i}f_{\rm i}
  = M_{\rm i}(\lambda)^{-1}\psi, \\[0.5ex]
  \Gamma_{0,\rm e}f_{\rm e} &= M_{\rm e}(\lambda)^{-1}\Gamma_{1,\rm e}f_{\rm e}
  = M_{\rm e}(\lambda)^{-1}\psi,
\end{align*}
and hence
\begin{align*}
  \wt M(\lambda)^{-1}\psi &= \varphi
  = \partial_{\nu_{\rm i}}f_{\rm i}|_\Sigma + \partial_{\nu_{\rm e}}f_{\rm e}|_\Sigma
  = \Gamma_{0,\rm i}f_{\rm i} + \Gamma_{0,\rm e}f_{\rm e} \\[0.5ex]
  &= M_{\rm i}(\lambda)^{-1}\psi + M_{\rm e}(\lambda)^{-1}\psi.
\end{align*}
Since this is true for arbitrary $\psi\in H^1(\Sigma)$, relation \eqref{weyl1} follows.
\end{proof}

\begin{rem}
Assume for simplicity that the potential $V$ in the differential expression $\cL$
in \eqref{Schrodinger} is identically equal to zero.  In this case the
$\gamma$-field $\wt \gamma$ and the Weyl function $\wt M$
in Proposition~\ref{prop:qbt1} are, roughly speaking, extensions of the acoustic
single-layer potential for the Helmholtz equation. \label{acoustic}
In fact, if $G_\lambda$, $\lambda\in\dC\setminus\dR$, is the integral kernel
of the resolvent of $A_{\rm free}$,
then for all
$\varphi\in C^\infty(\Sigma)$ we have
\[
  \big(\wt\gamma(\lambda)\varphi\big)(x)
  = \int_\Sigma G_\lambda(x,y)\varphi(y)d\sigma_y, \quad x\in\dR^n\backslash\Sigma,
\]
and
\[
  \big(\wt M(\lambda)\varphi\big)(x)
  = \int_\Sigma G_\lambda(x,y)\varphi(y)d\sigma_y,\quad x\in\Sigma,
\]
where $\sigma_y$ is the natural Lebesgue measure on $\Sigma$.
For more details we refer the reader to~\cite[Chapter~6]{McL00}; see also \cite{CK83, Co88}.
\end{rem}

We repeat the definition of a Schr\"odinger operator with $\delta$-potential
from the introduction and relate it to the quasi boundary triple $\wt\Pi$.
\begin{dfn}
\label{def:delta}
For a real-valued function $\alpha \in L^\infty(\Sigma)$ the Schr\"odinger
operator with $\delta$-potential on the hypersurface $\Sigma$ and strength
$\alpha$ is defined as follows:
\[
  A_{\delta,\alpha} \defeq \wt T \upharpoonright \ker(\alpha\wt \Gamma_1-\wt\Gamma_0),
\]
which is equivalent to
\begin{equation}
\label{eq:delta}
\begin{split}
  A_{\delta,\alpha}f \defequ -\Delta f+Vf, \\
  \dom A_{\delta,\alpha} \defequ \Biggl\{f\in H^{3/2}_\Delta(\dR^n\backslash\Sigma):\begin{matrix}
  f_{\rm i}\vert_\Sigma=f_{\rm e}|_\Sigma=f|_\Sigma\\ 
\alpha f|_\Sigma=
  \partial_{\nu_{\rm e}}f_{\rm e}|_\Sigma+\partial_{\nu_{\rm i}}f_{\rm i}|_\Sigma\end{matrix} \Biggr\}.
\end{split}
\end{equation}
\end{dfn}

The definition of $A_{\delta,\alpha}$ is compatible with the definition of a
point $\delta$-interaction in the one-dimensional case~\cite[Section~I.3]{AGHH05}, \cite{AK99} and
the definitions of the operators with $\delta$-potentials on hypersurfaces given in~\cite{AGS87, S88}
and in~\cite{BEKS94}; see also Proposition~\ref{prop:exner}. Note also that the domain of $A_{\delta,\alpha}$ is contained in $H^1(\dR^n)$;
cf.\ Proposition~\ref{prop:exner}.

\begin{figure}[H]
\begin{displaymath}
   \xymatrix{
& & \text{$A_{\rm free}$}\ar@{}[dr] | {\begin{turn}{-45}$\subset$\end{turn}}\\
\text{$A_{\rm i,e}$} \ar@{}[r] |{\begin{turn}{0}$\subset$\end{turn}} &
\text{$\wt A$}
\ar@{}[r] |{\begin{turn}{0}$\subset$\end{turn}}
\ar@{}[ur] | {\begin{turn}{45}$\subset$\end{turn}}
\ar@{}[dr] | {\begin{turn}{-45}$\subset$\end{turn}} &
\text{$A_{\delta,\alpha}$} \ar@{}[r] |{\begin{turn}{0}$\subset$\end{turn}} &
\text{$\wt T$} \ar@{}[r] |{\begin{turn}{0}$\subset$\end{turn}}&
\text{$T_{\rm i,e}$} \\
&& \text{\raisebox{4ex}{$A_{\rm D,i,e}$}}\ar@{}[ur] | {\begin{turn}{45}$\subset$\end{turn}}}
\qquad \begin{aligned} \ov{\wt T} &= \wt A^* \\[1ex] \ov{T}_{\rm i,e} &= A_{\rm i,e}^* \end{aligned}
\end{displaymath}
\caption{This figure shows how the operator $A_{\delta,\alpha}$ is related to the other operators
studied in this section. The operators $A_{\rm free}$, $A_{\delta,\alpha}$ and $A_{\rm D,i,e}$
are self-adjoint.}
\label{fig1}
\end{figure}


The next theorem contains a proof of self-adjointness of $A_{\delta,\alpha}$
and provides a factorization for the resolvent difference of $A_{\delta,\alpha}$
and $A_{\rm free}$ via Krein's formula; cf.\ \cite[Lemma 2.3\,(iii)]{BEKS94}.
Item (iii) in Theorem~\ref{thm:delta} can be viewed as a variant of the Birman--Schwinger principle;
it coincides with the one in \cite{BEKS94}.
The first item of Theorem~\ref{thm:delta} is part of Theorem A in the introduction.

\begin{thm}
\label{thm:delta}
Let $A_{\delta,\alpha}$ be as above and let $A_{\rm free}$ be the self-adjoint
operator defined in~\eqref{Afree}.
Let $\wt \gamma$ and $\wt M$ be the $\gamma$-field and the Weyl function associated
with the quasi boundary triple $\wt \Pi$ from Proposition~\ref{prop:qbt1}.
Then the following statements hold.
\begin{itemize}\setlength{\itemsep}{1.2ex}
\item[\rm (i)] The operator $A_{\delta,\alpha}$ is self-adjoint in the Hilbert space $L^2(\dR^n)$.
\item[\rm (ii)] For all $\lambda\in\rho(A_{\delta,\alpha})\cap\rho(A_{\rm free})$ the following
Krein formula holds:
\[
  (A_{\delta,\alpha} - \lambda)^{-1} -(A_{\rm free} - \lambda)^{-1}
  = \wt\gamma(\lambda)\,\bigl(I - \alpha\wt M(\lambda)\bigr)^{-1}\alpha\, \wt\gamma(\ov\lambda)^*,
\]
where $(I - \alpha \wt M(\lambda))^{-1}\in\cB(L^2(\Sigma))$.
\item[\rm (iii)] For all $\lambda\in\dR\setminus\sigma(A_{\rm free})$ we have
\[\lambda\in\sigma_p(A_{\delta,\alpha})\quad\Longleftrightarrow \quad
0\in\sigma_p\bigl(I - \alpha\wt M(\lambda)\bigr)\]
and $\dim\ker(A_{\delta,\alpha}-\lambda)=\dim\ker (I - \alpha\wt M(\lambda))$.
\end{itemize}
\end{thm}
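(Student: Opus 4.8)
The plan is to deduce all three statements from the abstract Krein formula in Theorem~\ref{thm:krein}, applied to the quasi boundary triple $\wt\Pi=\{L^2(\Sigma),\wt\Gamma_0,\wt\Gamma_1\}$ from Proposition~\ref{prop:qbt1}. The crucial preliminary observation is that $A_{\delta,\alpha}$ is exactly the extension $A_\Theta$ associated with the \emph{bounded, self-adjoint} parameter $B=\alpha\in\cB(L^2(\Sigma))$: indeed, since $\alpha\in L^\infty(\Sigma)$ is real-valued, multiplication by $\alpha$ is bounded and self-adjoint on $L^2(\Sigma)$, and Definition~\ref{def:delta} says precisely that $A_{\delta,\alpha}=\wt T\upharpoonright\ker(\alpha\wt\Gamma_1-\wt\Gamma_0)$, which is the operator $A_\Theta$ in \eqref{atb} with $B=\alpha$. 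The hypotheses of the last part of Theorem~\ref{thm:krein} are met because $\ran\wt\Gamma_0=L^2(\Sigma)$ by \eqref{ran_wtG} and $\wt M(\lambda_0)\in\sS_\infty(L^2(\Sigma))$ for any $\lambda_0\in\dC\setminus\dR$ by Proposition~\ref{prop:qbt1}\,(iii). Hence Theorem~\ref{thm:krein} immediately yields self-adjointness of $A_{\delta,\alpha}$ in $L^2(\dR^n)$ (statement (i)), the Krein formula
\[
  (A_{\delta,\alpha}-\lambda)^{-1}-(A_{\rm free}-\lambda)^{-1}
  =\wt\gamma(\lambda)\bigl(I-\alpha\wt M(\lambda)\bigr)^{-1}\alpha\,\wt\gamma(\ov\lambda)^*
\]
for all $\lambda\in\rho(A_{\delta,\alpha})\cap\rho(A_{\rm free})$, together with $(I-\alpha\wt M(\lambda))^{-1}\in\cB(L^2(\Sigma))$ (statement (ii)), and the eigenvalue correspondence $\lambda\in\sigma_p(A_{\delta,\alpha})\Leftrightarrow\ker(I-\alpha\wt M(\lambda))\neq\{0\}$ with equality of multiplicities (statement (iii)).

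For statement (iii) one should note a small bookkeeping point: Theorem~\ref{thm:krein}\,(i) is stated for $\lambda\in\rho(A_{\rm free})$ (there denoted $\rho(A_0)$), whereas Theorem~\ref{thm:delta}\,(iii) asks for $\lambda\in\dR\setminus\sigma(A_{\rm free})$; since $A_{\rm free}$ is self-adjoint, $\dR\setminus\sigma(A_{\rm free})\subset\rho(A_{\rm free})$, so this is just a restriction of the abstract statement to real spectral parameters. With $B=\alpha$ we get $BM(\lambda)=\alpha\wt M(\lambda)$, and the assertion $0\in\sigma_p(I-\alpha\wt M(\lambda))$ is by definition the same as $\ker(I-\alpha\wt M(\lambda))\neq\{0\}$; the multiplicity count $\dim\ker(A_{\delta,\alpha}-\lambda)=\dim\ker(I-\alpha\wt M(\lambda))$ is exactly what Theorem~\ref{thm:krein}\,(i) provides.

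Honestly, there is almost no obstacle here: the real work was done in Proposition~\ref{prop:qbt1}, where the quasi boundary triple was constructed, $\ran\wt\Gamma_0=L^2(\Sigma)$ was verified (via the single-layer potential argument), and the compactness of $\wt M(\lambda)$ in $L^2(\Sigma)$ was established. Given those facts, the present theorem is a direct translation through Theorem~\ref{thm:krein}. The only genuinely substantive point to double-check is that the abstract boundary condition $\ker(\alpha\wt\Gamma_1-\wt\Gamma_0)$ in Definition~\ref{def:delta} really does single out the concrete domain described by $f_{\rm i}|_\Sigma=f_{\rm e}|_\Sigma$ and $\alpha f|_\Sigma=\partial_{\nu_{\rm e}}f_{\rm e}|_\Sigma+\partial_{\nu_{\rm i}}f_{\rm i}|_\Sigma$ — but this is immediate from the definitions of $\wt\Gamma_0$, $\wt\Gamma_1$ and $\dom\wt T$ in Proposition~\ref{prop:qbt1}\,(i), since $f\in\dom\wt T$ already encodes $f_{\rm i}|_\Sigma=f_{\rm e}|_\Sigma$, and then $\alpha\wt\Gamma_1 f=\wt\Gamma_0 f$ is precisely the second interface condition. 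One may also remark, as the statement of Theorem~\ref{thm:delta}\,(ii) does, that $(I-\alpha\wt M(\lambda))^{-1}$ is bounded for \emph{all} $\lambda\in\rho(A_{\delta,\alpha})\cap\rho(A_{\rm free})$, which follows from the last clause of Theorem~\ref{thm:krein}.
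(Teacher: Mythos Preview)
Your proposal is correct and follows exactly the same route as the paper: observe that multiplication by $\alpha$ is bounded and self-adjoint on $L^2(\Sigma)$, invoke $\ran\wt\Gamma_0=L^2(\Sigma)$ and the compactness of $\wt M(\lambda_0)$ from Proposition~\ref{prop:qbt1}\,(iii), and then apply Theorem~\ref{thm:krein} with $B=\alpha$. The paper's own proof is a three-sentence version of what you wrote.
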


\begin{proof}
Under our assumptions on the function $\alpha$ the operator of multiplication
with $\alpha$ is bounded and self-adjoint in the Hilbert space $L^2(\Sigma)$.
The values of the Weyl function $\wt M$ are compact operators in $L^2(\Sigma)$;
see Proposition~\ref{prop:qbt1}\,(iii). Now the assertions (i)--(iii) follow from
Theorem~\ref{thm:krein}.
\end{proof}

The next theorem gives assumptions on $\alpha$, which ensure that the domain
of the self-adjoint operator $A_{\delta,\alpha}$ has $H^2$-regularity in $\dR^n\backslash\Sigma$. This theorem is the first
part of Theorem B in the introduction. Recall that $W^{1,\infty}(\Sigma)$ is the Soboloev space of order one of $L^\infty$
functions on $\Sigma$; cf. Section~\ref{sec:fspaces}.

\begin{thm}
\label{thm:H2delta}
Let $A_{\delta,\alpha}$ be the self-adjoint Schr\"odinger operator in Definition~\ref{def:delta}
and assume, in addition,
that the function $\alpha\colon\Sigma\rightarrow\dR$ belongs to $W^{1,\infty}(\Sigma)$.
Then $\dom A_{\delta,\alpha}$ is contained in $H^2(\dR^n\backslash\Sigma)$.
\end{thm}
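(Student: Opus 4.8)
The plan is to bootstrap the $H^{3/2}_\Delta$-regularity of a function $f=f_{\rm i}\oplus f_{\rm e}\in\dom A_{\delta,\alpha}$ up to $H^2$ on each side by controlling the Dirichlet and Neumann traces on $\Sigma$ and applying elliptic regularity (shift theorem) for the Dirichlet problem on each of $\Omega_{\rm i}$ and $\Omega_{\rm e}$. First I would fix $f\in\dom A_{\delta,\alpha}$. Since $-\Delta f+Vf=:g\in L^2(\dR^n)$ and $V\in L^\infty(\dR^n)$, we have $\Delta f_{\rm i}\in L^2(\Omega_{\rm i})$ and $\Delta f_{\rm e}\in L^2(\Omega_{\rm e})$. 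The boundary conditions give $f_{\rm i}|_\Sigma=f_{\rm e}|_\Sigma=:\psi$ and $\alpha\psi=\partial_{\nu_{\rm e}}f_{\rm e}|_\Sigma+\partial_{\nu_{\rm i}}f_{\rm i}|_\Sigma$. A priori, from $f\in H^{3/2}_\Delta(\dR^n\setminus\Sigma)$ and the trace mapping \eqref{LMtrace} (with $s=3/2$), we only know $\psi\in H^1(\Sigma)$ and $\partial_{\nu_{\rm i}}f_{\rm i}|_\Sigma,\partial_{\nu_{\rm e}}f_{\rm e}|_\Sigma\in L^2(\Sigma)$. If we can upgrade $\psi$ to $H^{3/2}(\Sigma)$, then, since $f_{\rm i}$ solves the Dirichlet problem $-\Delta f_{\rm i}=g_{\rm i}-Vf_{\rm i}\in L^2(\Omega_{\rm i})$ with boundary datum $\psi\in H^{3/2}(\Sigma)$, elliptic regularity for the Dirichlet Laplacian yields $f_{\rm i}\in H^2(\Omega_{\rm i})$ (and similarly for $f_{\rm e}$, using that $\Omega_{\rm e}$ has compact boundary and $f_{\rm e}\in L^2$, so no issue at infinity). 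Hence everything reduces to showing $\psi\in H^{3/2}(\Sigma)$.

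To get $\psi\in H^{3/2}(\Sigma)$, I would use Krein's formula from Theorem~\ref{thm:delta}(ii) together with the mapping properties of the Weyl function. Writing $f=(A_{\delta,\alpha}-\lambda)^{-1}g$ for a suitable $\lambda\in\rho(A_{\delta,\alpha})\cap\rho(A_{\rm free})$ (e.g.\ $\lambda$ far below the spectrum) and subtracting $(A_{\rm free}-\lambda)^{-1}g\in H^2(\dR^n)$, whose trace lies in $H^{3/2}(\Sigma)$, we have
\[
  f-(A_{\rm free}-\lambda)^{-1}g=\wt\gamma(\lambda)\bigl(I-\alpha\wt M(\lambda)\bigr)^{-1}\alpha\,\wt\gamma(\ov\lambda)^*g,
\]
and the trace of the right-hand side is $\wt M(\lambda)\bigl(I-\alpha\wt M(\lambda)\bigr)^{-1}\alpha\,\wt\gamma(\ov\lambda)^*g$ since $\wt\Gamma_1\wt\gamma(\lambda)=\wt M(\lambda)$. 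Now $\wt\gamma(\ov\lambda)^*g\in L^2(\Sigma)$; multiplication by $\alpha\in W^{1,\infty}(\Sigma)\subset L^\infty(\Sigma)$ keeps it in $L^2(\Sigma)$; then $\bigl(I-\alpha\wt M(\lambda)\bigr)^{-1}\in\cB(L^2(\Sigma))$ keeps it in $L^2(\Sigma)$; and finally $\wt M(\lambda)\colon L^2(\Sigma)\to H^1(\Sigma)$ is bounded (Proposition~\ref{prop:qbt1}(iii)), so the trace of $f-(A_{\rm free}-\lambda)^{-1}g$ lies in $H^1(\Sigma)$ — which is not yet enough. The improvement comes from a second pass through Krein's formula: set $\varphi:=\bigl(I-\alpha\wt M(\lambda)\bigr)^{-1}\alpha\,\wt\gamma(\ov\lambda)^*g$, so $\psi-\psi_0=\wt M(\lambda)\varphi$ where $\psi_0$ is the (free) trace in $H^{3/2}(\Sigma)$, and note that $\varphi$ satisfies the fixed-point identity $\varphi=\alpha\,\wt M(\lambda)\varphi+\alpha\,\wt\gamma(\ov\lambda)^*g=\alpha(\psi-\psi_0)+\alpha\wt\gamma(\ov\lambda)^*g$. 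Since $\psi_0\in H^{3/2}(\Sigma)\subset H^1(\Sigma)$, $\wt\gamma(\ov\lambda)^*g\in L^2(\Sigma)$, and $\psi\in H^1(\Sigma)$, and $\alpha\in W^{1,\infty}(\Sigma)$, the product rule \eqref{W1Hk} gives $\alpha(\psi-\psi_0)\in H^1(\Sigma)$, so $\varphi\in L^2(\Sigma)$ as before — but more is true after we feed back: we need to show $\varphi\in H^1(\Sigma)$ or at least that $\wt M(\lambda)\varphi\in H^{3/2}(\Sigma)$.

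The main obstacle is precisely this regularity gain for $\wt M(\lambda)$: $\wt M(\lambda)$ maps $L^2(\Sigma)\to H^1(\Sigma)$, but to conclude $\psi\in H^{3/2}(\Sigma)$ I need it to map $H^{1/2}(\Sigma)\to H^{3/2}(\Sigma)$ and then to know $\varphi\in H^{1/2}(\Sigma)$. Both follow from pseudodifferential calculus: $\wt M(\lambda)=(M_{\rm i}(\lambda)^{-1}+M_{\rm e}(\lambda)^{-1})^{-1}$ is, up to smoothing, a classical $\Psi$DO of order $-1$ on the compact manifold $\Sigma$ (it is the Neumann-to-Dirichlet-type operator, the ``sum'' of two one-sided ND maps), hence bounded $H^{s}(\Sigma)\to H^{s+1}(\Sigma)$ for all $s\in\RR$. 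I would cite the relevant elliptic/pseudodifferential facts (as the paper does elsewhere via \cite{McL00,G96}) rather than reprove them. Given that, the fixed-point identity $\varphi=\alpha\wt M(\lambda)\varphi+\alpha\wt\gamma(\ov\lambda)^*g$ bootstraps: starting from $\varphi\in L^2(\Sigma)$, $\wt M(\lambda)\varphi\in H^1(\Sigma)$, so $\alpha\wt M(\lambda)\varphi\in H^1(\Sigma)$ by \eqref{W1Hk}, hence $\varphi\in H^{1/2}(\Sigma)+L^2(\Sigma)=L^2$... I would instead argue directly that $\psi=\psi_0+\wt M(\lambda)\varphi$ with $\varphi=\alpha\psi+(\text{const. in }L^2)$, so $\psi-\psi_0=\wt M(\lambda)(\alpha\psi)+\wt M(\lambda)(L^2\text{-term})$; the second term is in $H^1$, and for the first term, $\alpha\psi\in H^1(\Sigma)\cdot$... wait, $\psi\in H^1$, $\alpha\in W^{1,\infty}$, so $\alpha\psi\in H^1(\Sigma)$, hence $\wt M(\lambda)(\alpha\psi)\in H^2(\Sigma)\subset H^{3/2}(\Sigma)$. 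Combining, $\psi-\psi_0\in H^{3/2}(\Sigma)$ after noting the $L^2$-term maps into $H^1(\Sigma)$ only — so I need one more half-derivative there, which I get because $\wt\gamma(\ov\lambda)^*g=\wt\Gamma_1(A_{\rm free}-\lambda)^{-1}g$ by Proposition~\ref{gammaprop}(ii), and $(A_{\rm free}-\lambda)^{-1}g\in H^2(\dR^n)$ has trace in $H^{3/2}(\Sigma)$, so $\wt\gamma(\ov\lambda)^*g\in H^{3/2}(\Sigma)$; then $\alpha\wt\gamma(\ov\lambda)^*g\in H^{1}(\Sigma)$ (loss from $W^{1,\infty}\cdot H^{3/2}$ is only to $H^1$, but that is fine for the final step since we apply $\wt M(\lambda)$). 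Tracking all contributions, $\varphi\in H^1(\Sigma)$, hence $\psi=\psi_0+\wt M(\lambda)\varphi\in H^{3/2}(\Sigma)+H^2(\Sigma)=H^{3/2}(\Sigma)$, and the Dirichlet elliptic regularity argument above then gives $f\in H^2(\Omega_{\rm i})\oplus H^2(\Omega_{\rm e})$. I expect the delicate bookkeeping to be the $\Psi$DO order count for $\wt M(\lambda)$ and making sure the product rule losses never cost more than the half-derivative margin we have; the paper likely isolates the mapping property $\wt M(\lambda)\colon H^s\to H^{s+1}$ as a separate lemma.
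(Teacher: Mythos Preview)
Your argument can be completed, but it takes a substantially longer route than the paper and imports an ingredient the paper never needs: you rely on the pseudodifferential mapping property $\wt M(\lambda)\colon H^1(\Sigma)\to H^2(\Sigma)$ (not just $L^2\to H^1$, which is all Proposition~\ref{prop:qbt1}(iii) gives) in order to bootstrap the Dirichlet trace $\psi$ up to $H^{3/2}(\Sigma)$ before invoking Dirichlet elliptic regularity on each side. That order~$-1$ $\Psi$DO property is true on a smooth compact $\Sigma$, but it is extra machinery, and your guess that ``the paper likely isolates the mapping property $\wt M(\lambda)\colon H^s\to H^{s+1}$ as a separate lemma'' is exactly what the paper does \emph{not} do.

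The paper's proof is a one-line observation that bypasses the Dirichlet trace entirely: it looks at the \emph{Neumann jump} $\wt\Gamma_0 f=\partial_{\nu_{\rm e}}f_{\rm e}|_\Sigma+\partial_{\nu_{\rm i}}f_{\rm i}|_\Sigma$. The interface condition reads $\wt\Gamma_0 f=\alpha\,\wt\Gamma_1 f$; since $\wt\Gamma_1 f\in H^1(\Sigma)$ and $\alpha\in W^{1,\infty}(\Sigma)$, the product rule gives $\wt\Gamma_0 f\in H^1(\Sigma)\subset H^{1/2}(\Sigma)$ immediately---no Krein formula, no bootstrap. One then decomposes $f=f_{\rm free}+f_\lambda$ along $\dom\wt T=\dom A_{\rm free}\dotplus\ker(\wt T-\lambda)$; the free part is already $H^2$, and for $f_\lambda$ one uses that $\wt\Gamma_0$ restricted to $\ker(\wt T-\lambda)\cap H^2(\dR^n\setminus\Sigma)$ is a bijection onto $H^{1/2}(\Sigma)$ (surjectivity from the standard $H^2$ trace theorem, injectivity from the quasi boundary triple). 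Since $\wt\Gamma_0 f_\lambda=\wt\Gamma_0 f\in H^{1/2}(\Sigma)$, this forces $f_\lambda\in H^2(\dR^n\setminus\Sigma)$. The moral: control the jump that the boundary condition hands you directly, rather than the trace you would like to feed into a Dirichlet problem.
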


\begin{proof}
For any function $f\in \dom A_{\delta,\alpha}$ we have $f\in\dom \wt T
\subset H^{3/2}_\Delta(\dR^n\backslash\Sigma)$. Then by Proposition~\ref{prop:qbt1}\,(i)
\[
  \wt\Gamma_1f \in H^1(\Sigma).
\]
The definition of the operator $A_{\delta,\alpha}$, the assumptions
on the smoothness of $\alpha$ and the property \eqref{W1Hk} imply that
\begin{equation}
\label{reg1}
  \wt\Gamma_0f = \alpha\wt\Gamma_1f \in H^1(\Sigma).
\end{equation}
Let us fix $\lambda\in\dC\setminus\dR$.
By the standard decomposition
\begin{equation}
\label{decomposition2}
  \dom \wt T = \dom A_{\rm free} \dotplus \ker(\wt T-\lambda)
\end{equation}
the function $f \in \dom A_{\delta,\alpha}$ can be represented
in the form $f= f_{\rm free} + f_\lambda$, where $f_{\rm free} \in \dom A_{\rm free}$
and $f_\lambda\in\ker(\wt T-\lambda)$.
It is clear that 
\[
f_{\rm free} \in H^2(\dR^n)\subset H^2(\dR^n\backslash\Sigma).
\]
Relation~\eqref{reg1} and $A_{\rm free}=\widetilde T\upharpoonright \ker \widetilde\Gamma_0$ yield
\begin{equation}
  \label{reg2}
  \wt\Gamma_0f_\lambda = \wt\Gamma_0f \in H^1(\Sigma) \subset H^{1/2}(\Sigma).
\end{equation}
The properties of the trace map in \eqref{Sobtrace} show that
$\wt \Gamma_0$ maps the space $\dom \wt T \cap H^2(\dR^n\backslash\Sigma)$
onto $H^{1/2}(\Sigma)$, and hence \eqref{decomposition2} implies that $\wt\Gamma_0$ maps
\[
  \ker(\wt T-\lambda) \cap H^2(\dR^n\backslash\Sigma)
\]
bijectively onto $H^{1/2}(\Sigma)$.
This observation and~\eqref{reg2} show  $f_\lambda\in H^2(\dR^n\backslash\Sigma)$,
and therefore $f=f_{\rm free} + f_\lambda\in H^2(\dR^n\backslash\Sigma)$.
\end{proof}

It follows from the proof that for Theorem~\ref{thm:H2delta} to hold it is sufficient that the
multiplication by $\alpha$ maps $H^1(\Sigma)$-functions into $H^{1/2}(\Sigma)$.

\medskip

A common method to define self-adjoint Schr\"odinger operators with $\delta$-interactions
on hypersurfaces makes use of semi-bounded closed sesquilinear forms.
For this consider the sesquilinear form
\begin{equation}
\label{sform}
  \fra_{\delta,\alpha}[f, g] = \bigl(\nabla f,\nabla g\bigr) + \bigl(Vf, g \bigr)
  - \bigl(\alpha f|_\Sigma, g|_\Sigma\bigr)_\Sigma,\qquad f,g\in H^1(\dR^n).
\end{equation}
As it is shown in~\cite{BEKS94}, for a real-valued $\alpha\in L^\infty(\Sigma)$
and a real-valued $V\in L^\infty(\dR^n)$, the form $\fra_{\delta,\alpha}$
is semi-bounded, closed and symmetric. The first representation
theorem --- see \cite[Theorem~VI.2.1]{K95} or \cite[Theorem VIII.15]{RS72-I} ---
yields that a unique self-adjoint operator $\cA_{\delta,\alpha}$ in $L^2(\dR^n)$
corresponds to the form $\fra_{\delta,\alpha}$
in the sense that
\[
  (\cA_{\delta,\alpha}f,g)=\fra_{\delta,\alpha}[f,g] \qquad\text{for all }
  f\in\dom\cA_{\delta,\alpha}\text{ and }g\in\dom\fra_{\delta,\alpha}=H^1(\RR^n).
\]
In the next proposition we show that our approach leads to the same operator.

\begin{prop}
\label{prop:exner}
The self-adjoint Schr\"odinger operator $A_{\delta,\alpha}$ in Definition~\ref{def:delta} and the
self-adjoint operator $\cA_{\delta,\alpha}$ corresponding to the sesquilinear form in~\eqref{sform} coincide.
\end{prop}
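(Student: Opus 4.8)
The plan is to show the two self-adjoint operators coincide by verifying that $A_{\delta,\alpha}\subset\cA_{\delta,\alpha}$; since both are self-adjoint, any operator inclusion between them is automatically an equality. So it suffices to take an arbitrary $f\in\dom A_{\delta,\alpha}$ and show two things: that $f\in\dom\fra_{\delta,\alpha}=H^1(\dR^n)$, and that $(A_{\delta,\alpha}f,g)=\fra_{\delta,\alpha}[f,g]$ for all $g\in H^1(\dR^n)$, which by the first representation theorem forces $f\in\dom\cA_{\delta,\alpha}$ and $\cA_{\delta,\alpha}f=A_{\delta,\alpha}f$.

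First I would establish that $\dom A_{\delta,\alpha}\subset H^1(\dR^n)$. A function $f\in\dom A_{\delta,\alpha}$ satisfies $f_{\rm i}\in H^{3/2}(\Omega_{\rm i})$, $f_{\rm e}\in H^{3/2}(\Omega_{\rm e})$, and the matching condition $f_{\rm i}|_\Sigma=f_{\rm e}|_\Sigma$. Each piece individually lies in $H^1$ of its respective domain; the standard gluing lemma for Sobolev functions (see e.g.\ \cite{McL00}) says that two $H^1$-functions on complementary domains whose traces agree on the common interface patch together to an $H^1$-function on all of $\dR^n$. Hence $f\in H^1(\dR^n)=\dom\fra_{\delta,\alpha}$, and moreover $\nabla f$ is the $L^2$-vector field whose restrictions to $\Omega_{\rm i}$ and $\Omega_{\rm e}$ are $\nabla f_{\rm i}$ and $\nabla f_{\rm e}$.

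Next I would verify the form identity by a direct Green's-identity computation. For $f\in\dom A_{\delta,\alpha}$ and $g\in H^1(\dR^n)$, write $g=g_{\rm i}\oplus g_{\rm e}$ with $g_{\rm i}|_\Sigma=g_{\rm e}|_\Sigma=g|_\Sigma$. Applying the first Green's identity \eqref{half_green} on each of $\Omega_{\rm i}$ and $\Omega_{\rm e}$ (valid since $f_{\rm i},f_{\rm e}\in H^{3/2}_\Delta$ and $g_{\rm i},g_{\rm e}\in H^1$) and adding, I get
\[
  (-\Delta f+Vf,g)=(\nabla f,\nabla g)+(Vf,g)-\bigl(\partial_{\nu_{\rm i}}f_{\rm i}|_\Sigma+\partial_{\nu_{\rm e}}f_{\rm e}|_\Sigma,\,g|_\Sigma\bigr)_\Sigma.
\]
Now the boundary condition $\alpha f|_\Sigma=\partial_{\nu_{\rm e}}f_{\rm e}|_\Sigma+\partial_{\nu_{\rm i}}f_{\rm i}|_\Sigma$ in $\dom A_{\delta,\alpha}$ replaces the last term by $-(\alpha f|_\Sigma,g|_\Sigma)_\Sigma$, giving exactly $\fra_{\delta,\alpha}[f,g]$. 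Thus $(A_{\delta,\alpha}f,g)=\fra_{\delta,\alpha}[f,g]$ for all $g\in H^1(\dR^n)$, so $f\in\dom\cA_{\delta,\alpha}$ with $\cA_{\delta,\alpha}f=A_{\delta,\alpha}f$. This yields $A_{\delta,\alpha}\subset\cA_{\delta,\alpha}$, and self-adjointness of both (Theorem~\ref{thm:delta}\,(i) and the first representation theorem) upgrades this to equality.

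The only genuinely delicate point is the gluing step: one must make sure the combined $H^1$-regularity across $\Sigma$ really follows from the one-sided $H^{3/2}$ (hence $H^1$) regularity plus equality of traces, and that no distributional $\delta_\Sigma$-term appears in $\nabla f$. This is where the matching condition $f_{\rm i}|_\Sigma=f_{\rm e}|_\Sigma$ is used, and it is a routine but essential application of the characterization of $H^1(\dR^n)$ via restrictions to the two sides. Everything else is bookkeeping with Green's identities already recorded in Section~\ref{sec:fspaces}.
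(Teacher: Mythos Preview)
Your proof is correct and follows essentially the same route as the paper's own argument: establish $\dom A_{\delta,\alpha}\subset H^1(\dR^n)$ via the trace-matching/gluing characterization of $H^1$, then use the first Green's identity on $\Omega_{\rm i}$ and $\Omega_{\rm e}$ together with the interface condition to verify the form identity, and conclude $A_{\delta,\alpha}\subset\cA_{\delta,\alpha}$, hence equality by self-adjointness. The only cosmetic difference is that the paper cites \cite{AF03} for the gluing step where you cite \cite{McL00}.
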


\begin{proof}
First we show the inclusion $\dom A_{\delta,\alpha} \subset \dom \fra_{\delta,\alpha}$.
For this let $f = f_{\rm i}\oplus f_{\rm e}\in \dom A_{\delta,\alpha}$.
According to \eqref{eq:delta} we have, in particular,
\[
  f_{\rm i} \in H^{3/2}(\Omega_{\rm i})\subset H^1(\Omega_{\rm i}),\quad
  f_{\rm e} \in H^{3/2}(\Omega_{\rm e})\subset H^1(\Omega_{\rm e}),
  \quad\text{and}\quad f_{\rm i}|_\Sigma = f_{\rm e}|_\Sigma.
\]
Making use of \cite[Theorems~5.24 and 5.29]{AF03} a standard extension argument
implies that $f\in H^1(\dR^n)$ and hence $\dom A_{\delta,\alpha} \subset \dom \fra_{\delta,\alpha}$.

Next let $f = f_{\rm i}\oplus f_{\rm e}\in\dom A_{\delta,\alpha}$
and $g = g_{\rm i}\oplus g_{\rm e}\in\dom \fra_{\delta,\alpha}$. Then $\fra_{\delta,\alpha}[f,g]$
is well defined.
By the first Green's identity \eqref{half_green} we have
\begin{align*}
  (\nabla f_{\rm i}, \nabla g_{\rm i})_{\rm i}
  - (\partial_{\nu_{\rm i}}f_{\rm i}|_\Sigma, g_{\rm i}|_\Sigma)_\Sigma
  &= (-\Delta f_{\rm i},g_{\rm i})_{\rm i}, \\[0.5ex]
  (\nabla f_{\rm e}, \nabla g_{\rm e})_{\rm e}
  - (\partial_{\nu_{\rm e}}f_{\rm e}|_\Sigma, g_{\rm e}|_\Sigma)_\Sigma
  &= (-\Delta f_{\rm e},g_{\rm e})_{\rm e}.
\end{align*}
Using this and the relation $\alpha f|_\Sigma = \partial_{\nu_{\rm e}}f_{\rm e}|_\Sigma
  + \partial_{\nu_{\rm i}}f_{\rm i}|_\Sigma$
we obtain
\begin{align*}
  & \fra_{\delta,\alpha}[f,g]
  = (\nabla f,\nabla g) + (Vf,g) - \bigl(\alpha f|_\Sigma,g|_\Sigma\bigr)_\Sigma \\[0.5ex]
  &= (\nabla f_{\rm i},\nabla g_{\rm i})_{\rm i} + (\nabla f_{\rm e},\nabla g_{\rm e})_{\rm e}
  + (Vf,g) - \bigl(\partial_{\nu_{\rm i}}f_{\rm i}|_\Sigma,g_{\rm i}|_\Sigma\bigr)_\Sigma
  - \bigl(\partial_{\nu_{\rm e}}f_{\rm e}|_\Sigma,g_{\rm e}|_\Sigma\bigr)_\Sigma \\[0.5ex]
  &= (-\Delta f_{\rm i},g_{\rm i})_{\rm i} + (-\Delta f_{\rm e},g_{\rm e})_{\rm e} + (Vf,g)
  = \bigl((-\Delta+V)f,g\bigr).
\end{align*}
Now the first representation theorem (see \cite[Theorem~VI.2.1]{K95}) implies that
$f\in \dom \cA_{\delta,\alpha}$ and $\cA_{\delta,\alpha}f = -\Delta f + Vf$;
thus $A_{\delta,\alpha} \subset \cA_{\delta,\alpha}$. Since both operators $A_{\delta,\alpha}$
and $\cA_{\delta,\alpha}$ are self-adjoint, we conclude that $A_{\delta,\alpha} = \cA_{\delta,\alpha}$.
\end{proof}

\subsection{Schr\"odinger operators with $\delta'$-interactions on
hypersurfaces: self-adjoint\-ness,  Krein's formula and $H^2$-regularity}
\label{sec:delta'}

In this section we make use of quasi boundary triples to define and study the Schr\"odinger operator
$A_{\delta^\prime\!,\beta}$ associated with the formal differential expression
$\cL_{\delta,\alpha}=-\Delta+V-\beta\langle\delta^\prime_\Sigma,\cdot\,\rangle\,\delta^\prime_\Sigma$
in \eqref{expressions1}.
The methodology and presentation is very much the same as in the previous section.
We mention that to the best of our knowledge a systematic treatment of $\delta'$-potentials
on hypersurfaces is not contained elsewhere; see the list of open problems in \cite{E08}.

In analogy to \eqref{wtA} and \eqref{wtT} we define the symmetric extension
\begin{equation}
\label{whA}
  \wh A \defeq A_{\rm free} \cap  A_{\rm N, i, e}
  = \cL\upharpoonright\big\{ f\in H^2(\dR^n)\colon
   \partial_{\nu_{\rm i}}f_{\rm i}|_\Sigma  = \partial_{\nu_{\rm e}}f_{\rm e}|_\Sigma = 0\big\}
\end{equation}
of the orthogonal sum $A_{\rm i,e}$, defined in \eqref{aie}, which will serve as the underlying
symmetric operator for the quasi boundary triple in the next proposition, and the operator
\begin{equation}
\label{whT}
  \wh T \defeq
  T_{\rm i,e}\upharpoonright
  \bigl\{f_{\rm i} \oplus f_{\rm e} \in H^{3/2}_\Delta(\dR^n\backslash\Sigma) \colon
  \partial_{\nu_{\rm e}}f_{\rm e}|_\Sigma + \partial_{\nu_{\rm i}}f _{\rm i}|_\Sigma = 0 \bigr\}.
\end{equation}
We remark that the quasi boundary triple and Weyl function
below appear also implicitly in \cite[Section~4]{R09} in a different context.

\begin{prop}
\label{prop:qbt2}
Let the operators $\wh A$, $\wh T$, $A_{\rm D, i,e}$, $A_{\rm N,i,e}$ and $A_{\rm free}$
be as in~\eqref{whA}, \eqref{whT}, \eqref{ADANoplus1}, \eqref{ADANoplus2} and \eqref{Afree}, respectively, and
let $M_{\rm i}$ and $M_{\rm e}$ be the Weyl functions from Proposition~\ref{prop:qbt0}.
Then the following statements hold.
\begin{itemize}\setlength{\itemsep}{1.2ex}
\item[\rm (i)]
The triple $\wh\Pi = \{L^2(\Sigma), \wh\Gamma_0,\wh\Gamma_1\}$, where
\begin{equation*}
  \wh\Gamma_0 f =
  \partial_{\rm \nu_{\rm e}} f_{\rm e}|_\Sigma,
  \quad
  \wh\Gamma_1f = f_{\rm e}|_\Sigma - f_{\rm i}|_\Sigma,
  \qquad
  f = f_{\rm i} \oplus f_{\rm e} \in \dom  \wh T,
\end{equation*}
is a quasi boundary triple for $\wh A^*$.
The restrictions of $\wh T$ to the kernels of the boundary
mappings are
\[
  \wh T \upharpoonright \ker\wh \Gamma_0 = A_{\rm N,i,e}
  \quad\text{and}\quad
  \wh T \upharpoonright \ker\wh \Gamma_1 = A_{\rm free},
 \]
and the ranges of the boundary mappings are
\[
  \ran \wh \Gamma_0 = L^2(\Sigma)
  \quad\text{and}\quad
  \ran \wh \Gamma_1 = H^1(\Sigma).
\]
\item[\rm (ii)]
For $\lambda\in\rho(A_{\rm N,i,e})$ and $\varphi\in L^2(\Sigma)$ the problem
\begin{equation*}
(\cL-\lambda) f = 0,
    \qquad  \partial_{\nu_{\rm e}} f_{\rm e}|_\Sigma
    = -\partial_{\nu_{\rm i}} f_{\rm i}|_\Sigma
    = \varphi,
 \end{equation*}
has the unique solution $\wh\gamma(\lambda)\varphi\in H^{3/2}_\Delta(\dR^n\backslash\Sigma)$,
where $\wh\gamma$ is the $\gamma$-field associated with $\wh\Pi$.  Moreover, $\wh\gamma(\lambda)$ is bounded
from $L^2(\Sigma)$ to $L^2(\dR^n)$.
\item[\rm (iii)]
For $\lambda\in\rho(A_{\rm N,i,e})$ the values $\wh M(\lambda)$ of the Weyl function
associated with $\wh \Pi$ are bounded operators
from $L^2(\Sigma)$ to $H^{1}(\Sigma)$ and compact operators in $L^2(\Sigma)$.
If, in addition, $\lambda\in\rho(A_{\rm free})$, then $\wh M(\lambda)$ is a bijective map
from $L^2(\Sigma)$ onto $H^1(\Sigma)$.
Moreover, the identity
\begin{equation}
\label{weyl2}
  \wh M(\lambda) = M_{\rm i}(\lambda)+M_{\rm e}(\lambda)
\end{equation}
holds for all $\lambda\in\rho(A_{\rm N,i,e})$.
\end{itemize}
\end{prop}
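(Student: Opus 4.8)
The plan is to follow exactly the same strategy as in the proof of Proposition~\ref{prop:qbt1}, exploiting the symmetry of the two constructions: the pair $\{\wh A,\wh T\}$ for the $\delta'$-case plays the role of $\{\wt A,\wt T\}$ from the $\delta$-case, with the boundary mappings interchanged in an appropriate sense. For part~(i), I would first note that $\wh\Gamma_0$ and $\wh\Gamma_1$ are well defined by the trace properties \eqref{LMtrace}, and then verify conditions (a), (b), (c) of Proposition~\ref{suff_cond_qbt}. For (a), given $\varphi\in L^2(\Sigma)$ and $\psi\in H^{3/2}(\Sigma)$ one constructs, via the surjectivity of the Sobolev trace map \eqref{Sobtrace} applied to $\Omega_{\rm i}$ and $\Omega_{\rm e}$ separately, functions $f_{\rm i}\in H^2(\Omega_{\rm i})$, $f_{\rm e}\in H^2(\Omega_{\rm e})$ with $\partial_{\nu_{\rm e}}f_{\rm e}|_\Sigma=\varphi=-\partial_{\nu_{\rm i}}f_{\rm i}|_\Sigma$, $f_{\rm e}|_\Sigma-f_{\rm i}|_\Sigma=\psi$; since $H^2(\dR^n\setminus\Sigma)\subset\dom\wh T$, this shows $H^{1/2}(\Sigma)\times H^{3/2}(\Sigma)\subset\ran\binom{\wh\Gamma_0}{\wh\Gamma_1}$, and density follows. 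For (b), one writes Green's identity \eqref{green_identity} on each domain and adds; using the interface condition $\partial_{\nu_{\rm e}}f_{\rm e}|_\Sigma+\partial_{\nu_{\rm i}}f_{\rm i}|_\Sigma=0$ (and the same for $g$) the two boundary terms combine to $(\wh\Gamma_1 f,\wh\Gamma_0 g)_\Sigma-(\wh\Gamma_0 f,\wh\Gamma_1 g)_\Sigma$. For (c), $\wh T\restr{\ker\wh\Gamma_0}$ evidently contains the self-adjoint $A_{\rm N,i,e}$. Then Proposition~\ref{suff_cond_qbt} gives that $\wh\Pi$ is a quasi boundary triple for $\wh A^*$ with $\wh T\restr{\ker\wh\Gamma_0}=A_{\rm N,i,e}$; the operator $\wh T\restr{\ker\wh\Gamma_1}$ is symmetric by the Green identity just proved and contains the self-adjoint $A_{\rm free}$, hence equals it, and intersecting the two kernel restrictions recovers $A_{\rm free}\cap A_{\rm N,i,e}=\wh A$. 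Finally $\ran\wh\Gamma_1=H^1(\Sigma)$ and $\ran\wh\Gamma_0\subset L^2(\Sigma)$ follow from the surjectivity statements in Proposition~\ref{prop:qbt0}; for $\ran\wh\Gamma_0=L^2(\Sigma)$ one again uses a single-layer-type construction (a cutoff of a single-layer potential, as in Proposition~\ref{prop:qbt1}, or simply the already-established surjectivity onto $H^{1/2}(\Sigma)$ upgraded by the same McLean argument) to produce, for arbitrary $\varphi\in L^2(\Sigma)$, an element of $\dom\wh T$ with $\partial_{\nu_{\rm e}}f_{\rm e}|_\Sigma=\varphi$ and $\partial_{\nu_{\rm i}}f_{\rm i}|_\Sigma=-\varphi$.

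For part~(ii), by Definition~\ref{def:gammaWeyl} and Proposition~\ref{gammaprop}\,(i) the $\gamma$-field $\wh\gamma(\lambda)$ maps $\ran\wh\Gamma_0=L^2(\Sigma)$ onto $\ker(\wh T-\lambda)$, so $f\defeq\wh\gamma(\lambda)\varphi$ satisfies $(\cL-\lambda)f=0$, lies in $H^{3/2}_\Delta(\dR^n\setminus\Sigma)$, and obeys $\partial_{\nu_{\rm e}}f_{\rm e}|_\Sigma+\partial_{\nu_{\rm i}}f_{\rm i}|_\Sigma=0$ (membership in $\dom\wh T$) together with $\partial_{\nu_{\rm e}}f_{\rm e}|_\Sigma=\wh\Gamma_0 f=\varphi$; uniqueness is uniqueness of the $\gamma$-field. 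Boundedness of $\wh\gamma(\lambda)$ from $L^2(\Sigma)$ into $L^2(\dR^n)$ is Proposition~\ref{gammaprop}\,(i) again. For the boundedness and compactness statements in part~(iii): $\wh M(\lambda)\in\cB(L^2(\Sigma),H^1(\Sigma))$ and bijectivity onto $H^1(\Sigma)$ for $\lambda\in\rho(A_{\rm N,i,e})\cap\rho(A_{\rm free})$ follow from Proposition~\ref{gammaprop}\,(iv),(v) and \eqref{ran_wtG}'s analogue just proved, while compactness in $L^2(\Sigma)$ is the compact embedding $H^1(\Sigma)\hookrightarrow L^2(\Sigma)$.

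The one genuinely new computation is the identity \eqref{weyl2}, $\wh M(\lambda)=M_{\rm i}(\lambda)+M_{\rm e}(\lambda)$. I would argue as follows: fix $\lambda\in\rho(A_{\rm N,i,e})$ and $\varphi\in L^2(\Sigma)$, and set $f=f_{\rm i}\oplus f_{\rm e}=\wh\gamma(\lambda)\varphi\in\ker(\wh T-\lambda)$, so $\partial_{\nu_{\rm e}}f_{\rm e}|_\Sigma=\varphi$ and $\partial_{\nu_{\rm i}}f_{\rm i}|_\Sigma=-\varphi$. Then $f_{\rm i}\in\ker(T_{\rm i}-\lambda)$ with $\Gamma_{0,\rm i}f_{\rm i}=\partial_{\nu_{\rm i}}f_{\rm i}|_\Sigma=-\varphi$, hence by the defining relation $M_j(\lambda)\Gamma_{0,j}=\Gamma_{1,j}$ on $\ker(T_j-\lambda)$ we get $f_{\rm i}|_\Sigma=\Gamma_{1,\rm i}f_{\rm i}=M_{\rm i}(\lambda)(-\varphi)=-M_{\rm i}(\lambda)\varphi$; similarly $f_{\rm e}|_\Sigma=M_{\rm e}(\lambda)\varphi$. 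Therefore
\[
  \wh M(\lambda)\varphi=\wh\Gamma_1 f=f_{\rm e}|_\Sigma-f_{\rm i}|_\Sigma=M_{\rm e}(\lambda)\varphi+M_{\rm i}(\lambda)\varphi,
\]
and since $\varphi\in L^2(\Sigma)$ was arbitrary, \eqref{weyl2} holds. Note that, in contrast to \eqref{weyl1}, no invertibility of $M_{\rm i}$ or $M_{\rm e}$ is needed, so the identity holds on all of $\rho(A_{\rm N,i,e})$. I do not anticipate a serious obstacle here; the only point requiring mild care is bookkeeping of the sign of the normal-derivative traces (the normals $\nu_{\rm i}$ and $\nu_{\rm e}$ point in opposite directions), which is exactly what produces the $+$ in $M_{\rm i}+M_{\rm e}$ rather than a difference — the analogue of why \eqref{weyl1} features $M_{\rm i}^{-1}+M_{\rm e}^{-1}$.
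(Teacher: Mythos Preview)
Your overall strategy matches the paper's: verify the hypotheses of Proposition~\ref{suff_cond_qbt}, identify the kernel restrictions, determine the ranges of $\wh\Gamma_0,\wh\Gamma_1$, and then compute $\wh M(\lambda)=M_{\rm i}(\lambda)+M_{\rm e}(\lambda)$ exactly as you did. The Green's identity computation, the identification of $\wh T\upharpoonright\ker\wh\Gamma_0$ and $\wh T\upharpoonright\ker\wh\Gamma_1$, and the argument for \eqref{weyl2} are all correct and essentially identical to the paper's.

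There is, however, a genuine mix-up in the treatment of the ranges of the boundary maps. You have the roles of $\wh\Gamma_0$ and $\wh\Gamma_1$ reversed. The equality $\ran\wh\Gamma_0=L^2(\Sigma)$ is the \emph{easy} one: given $\varphi\in L^2(\Sigma)$, the surjectivity of the individual Neumann traces (Proposition~\ref{prop:qbt0} or \eqref{LMtrace}) lets you pick $f_{\rm e}\in H^{3/2}_\Delta(\Omega_{\rm e})$ with $\partial_{\nu_{\rm e}}f_{\rm e}|_\Sigma=\varphi$ and $f_{\rm i}\in H^{3/2}_\Delta(\Omega_{\rm i})$ with $\partial_{\nu_{\rm i}}f_{\rm i}|_\Sigma=-\varphi$; then $f=f_{\rm i}\oplus f_{\rm e}\in\dom\wh T$ and $\wh\Gamma_0 f=\varphi$. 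Your proposed single-layer construction actually \emph{fails} here: a cutoff of ${\rm SL}\varphi$ has $\partial_{\nu_{\rm e}}f_{\rm e}|_\Sigma+\partial_{\nu_{\rm i}}f_{\rm i}|_\Sigma=\varphi\ne 0$, so it does not belong to $\dom\wh T$.

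Conversely, $\ran\wh\Gamma_1=H^1(\Sigma)$ does \emph{not} follow from Proposition~\ref{prop:qbt0} alone: for $\psi\in H^1(\Sigma)$ you need $f\in\dom\wh T$ with $f_{\rm e}|_\Sigma-f_{\rm i}|_\Sigma=\psi$ \emph{and} zero sum of normal derivatives, which requires simultaneous control of both traces that the individual surjectivity statements do not give. The paper handles this with the \emph{double}-layer potential ${\rm DL}\psi$ (cut off as in Proposition~\ref{prop:qbt1}): by the standard jump relations, $\chi\,{\rm DL}\psi$ has continuous normal derivative across $\Sigma$ (hence lies in $\dom\wh T$) and Dirichlet jump equal to $\psi$, so $\wh\Gamma_1(\chi\,{\rm DL}\psi)=\psi$. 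This is the dual of the single-layer argument in Proposition~\ref{prop:qbt1}, and it is the one nontrivial ingredient you are missing.
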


\begin{proof}
(i)
One can see that $\wh \Pi$ is a quasi boundary triple for $\wh A^*$ in a similar way as in the
proof of Proposition~\ref{prop:qbt1}\,(i). Basically, the same argumentation as before yields
that $\wh T\upharpoonright\ker\wh\Gamma_0 = A_{\rm N,i,e}$,
$\wh T\upharpoonright\ker\wh\Gamma_1 = A_{\rm free}$ and also that $\ran\wh\Gamma_0 = L^2(\Sigma)$,
$\ran\wh\Gamma_1\subset H^1(\Sigma)$. Further we show surjectivity of $\wh\Gamma_1$ onto $H^1(\Sigma)$.
Fix a function $\chi\in C_0^\infty(\RR^n)$ as in the proof of Proposition~\ref{prop:qbt1},
i.e.\ such that $\chi\equiv1$ on an open neighbourhood of $\overline{\Omega_{\rm i}}$.
Let ${\rm DL}$ be the double-layer potential associated with the hypersurface $\Sigma$ and
the differential expression $-\Delta+1$; see, e.g.\ \cite[Section 6]{McL00}
for the discussion of double-layer potentials. By \cite[Theorem~6.11, Theorem~6.12\,(ii)]{McL00}
for an arbitrary $\varphi\in H^1(\Sigma)$ the function $f \defeq \chi {\rm DL}\varphi$
belongs to $\dom \wh T$ and satisfies the condition
\[
  f_{\rm e}|_\Sigma - f_{\rm i}|_\Sigma = \varphi,
\]
hence $\wh \Gamma_1f = \varphi$, and thus $\ran\wh \Gamma_1 = H^1(\Sigma)$.

(ii)--(iii)
The properties of the $\gamma$-field $\wh\gamma$ and the Weyl function $\wh M$ follow
from Proposition~\ref{gammaprop} in the same way as in the proof of Proposition~\ref{prop:qbt1}\,(ii)--(iii).
We only verify the identity \eqref{weyl2}. For this let
$\lambda\in\rho(A_{\rm N,i,e})$, so that the operators
$M_{\rm i}(\lambda)$,  $M_{\rm e}(\lambda)$ and $\wh M(\lambda)$ all exist; cf.\ Proposition~\ref{prop:qbt0}.
If $\wh M(\lambda) \varphi = \psi$ for some $\varphi\in L^2(\Sigma)$ and $\psi \in H^1(\Sigma)$, then
there exists $f = f_{\rm i}\oplus f_{\rm e}\in \ker (\wh T-\lambda)$ such that
\[
 \wh \Gamma_0 f = \varphi\quad\text{and}\quad\wh\Gamma_1f = \psi.
\]
As $f_{\rm i}\in \ker(T_{\rm i} -\lambda)$ and $f_{\rm e} \in \ker(T_{\rm e} - \lambda)$, we have
\begin{align*}
  \Gamma_{1,\rm i}f_{\rm i} &= M_{\rm i}(\lambda)\Gamma_{0,\rm i}f_{\rm i} = -M_{\rm i}(\lambda)\varphi,\\
  \Gamma_{1,\rm e}f_{\rm e} &= M_{\rm e}(\lambda)\Gamma_{0,\rm e}f_{\rm e} = M_{\rm e}(\lambda)\varphi,
\end{align*}
and hence
\[
  \wh M(\lambda)\varphi = f_{\rm e}|_\Sigma - f_{\rm i}|_\Sigma
  = M_{\rm e}(\lambda)\varphi + M_{\rm i}(\lambda)\varphi.
\]
Since this is true for arbitrary $\varphi\in L^2(\Sigma)$, relation \eqref{weyl2} follows.
\end{proof}

\begin{rem}
Assume for simplicity that the potential $V$
in the differential expression $\cL$ in \eqref{Schrodinger} is identically equal to zero.
Note that the problem in (ii) is decoupled into an interior and an exterior problem.
Let, as in Remark~\ref{acoustic},  $G_\lambda$ be the integral kernel of the
resolvent of $A_{\rm free}$. Then for all $\psi\in C^\infty(\Sigma)$
\[
\big(\wh\gamma(\lambda)\wh M(\lambda)^{-1}\psi\big)(x) = \int_\Sigma \big[\partial_{\nu_{\rm i}(y)}G_\lambda(x,y)\big] \psi(y)d\sigma_y, \quad x\in\dR^n\backslash\Sigma,
\]
and
\[
\big(\wh M(\lambda)^{-1}\psi\big)(x) = -\partial_{\nu_{\rm i}(x)}\int_\Sigma \big[\partial_{\nu_{\rm i}(y)}G_\lambda(x,y)\big] \psi(y)d\sigma_y,\quad x\in\Sigma,
\]
where $\partial_{\nu_{\rm i}(x)}$ and $\partial_{\nu_{\rm i}(y)}$ are the normal derivatives with respect to the first and second arguments with normals pointing outwards of $\Omega_{\rm i}$, and $\sigma_y$ is the natural Lebesgue measure on $\Sigma$.  
Note that the operator $\wh\gamma(\lambda)\wh M(\lambda)^{-1}$ is, roughly speaking, an extension of the acoustic
double-layer potential for the Helmholtz equation, see, e.g.\ \cite[Chapter 6]{McL00}. 
The representation of $\wh M(\lambda)^{-1}$, given above, appears also in \cite{R09} in a slightly different context.
\end{rem}


We repeat the definition of the Schr\"odinger operator with $\delta^\prime$-potential
from the introduction and relate it to the quasi boundary triple $\wh\Pi$.

\begin{dfn}\label{def:deltapr}
For a real-valued function $\beta$ such that $1/\beta\in L^\infty(\Sigma)$
the Schr\"odinger operator with $\delta'$-potential on the hypersurface $\Sigma$ and strength $\beta$
is defined as follows:
\begin{equation*}
  A_{\delta'\!,\beta} = \wh T \upharpoonright \ker(\wh \Gamma_1-\beta\wh\Gamma_0),
\end{equation*}
which is equivalent to
\begin{equation}
\label{eq:delta'}
\begin{split}
  A_{\delta^\prime\!,\beta}f \defequ -\Delta f+ Vf, \\
  \dom A_{\delta^\prime\!,\beta} \defequ \Biggl\{f\in H^{3/2}_\Delta(\dR^n\backslash\Sigma):\begin{matrix}
  \partial_{\nu_e}f_{\rm e}\vert_\Sigma=-\partial_{\nu_{\rm i}}f_{\rm i}\vert_\Sigma\\
  \beta \partial_{\nu_{\rm e}}f_{\rm e}|_\Sigma=f_{\rm e}|_\Sigma-f_{\rm i}|_\Sigma\end{matrix}  \Biggr\}.
\end{split}
\end{equation}
\end{dfn}

The definition of $A_{\delta'\!,\beta}$ is compatible with the definition of a
point $\delta'$-interaction in the one-dimensional case~\cite[Section~I.4]{AGHH05}, \cite{AK99}
and the definition of the operator with $\delta'$-potentials on spheres given in~\cite{AGS87, S88-2}.
Note that, in contrast to the domain of $A_{\delta,\alpha}$, the domain of $A_{\delta'\!,\beta}$ is
not contained in $H^1(\dR^n)$.

\begin{figure}[H]
\begin{displaymath}
   \xymatrix{
& & \text{$A_{\rm N,i,e}$}\ar@{}[dr] | {\begin{turn}{-45}$\subset$\end{turn}}\\
\text{$A_{\rm i,e}$} \ar@{}[r] |{\begin{turn}{0}$\subset$\end{turn}} &
\text{$\wh A$}
\ar@{}[r] |{\begin{turn}{0}$\subset$\end{turn}}
\ar@{}[ur] | {\begin{turn}{45}$\subset$\end{turn}}
\ar@{}[dr] | {\begin{turn}{-45}$\subset$\end{turn}} &
\text{$A_{\delta'\!,\beta}$} \ar@{}[r] |{\begin{turn}{0}$\subset$\end{turn}} &
\text{$\wh T$} \ar@{}[r] |{\begin{turn}{0}$\subset$\end{turn}}&
\text{$T_{\rm i,e}$} \\
&& \text{\raisebox{4ex}{$A_{\rm free}$}}\ar@{}[ur] | {\begin{turn}{45}$\subset$\end{turn}}}
\qquad \begin{aligned} \ov{\wh T} &= \wh A^* \\[1ex] \ov {T}_{\rm i,e} &= A_{\rm i,e}^* \end{aligned}
\end{displaymath}
\caption{This figure shows how the operator $A_{\delta'\!,\beta}$ is related to the other operators
studied in this section. The operators $A_{\rm N,i,e}$, $A_{\delta'\!,\beta}$ and $A_{\rm free}$
are self-adjoint.}
\label{fig2}
\end{figure}

The next theorem is the counterpart of Theorem~\ref{thm:delta} and can be proved in the same way.
Theorem~\ref{thm:delta'} shows the self-adjointness of $A_{\delta'\!,\beta}$ and
provides a factorization for the resolvent difference of $A_{\delta'\!,\beta}$
and $A_{\rm N,i,e}$ via Krein's formula and a variant of the Birman--Schwinger principle.
The first item of the next theorem is part of Theorem A in the introduction.

\begin{thm}
\label{thm:delta'}
Let $A_{\delta'\!,\beta}$ be as above and let $A_{\rm N,i,e}$ be the self-adjoint operator
defined in~\eqref{ADANoplus2}. Let $\wh \gamma$ and $\wh M$ be the $\gamma$-field
and the Weyl function associated with the quasi boundary triple $\wh \Pi$ from
Proposition~\ref{prop:qbt2}.  Then the following statements hold.
\begin{itemize}\setlength{\itemsep}{1.2ex}
\item[(i)]
The operator $A_{\delta'\!,\beta}$ is self-adjoint in the Hilbert space $L^2(\dR^n)$.
\item[(ii)]
For all $\lambda\in\rho(A_{\delta'\!,\beta})\cap\rho(A_{\rm N,i,e})$ the following
Krein formula holds:
\[
(A_{\delta'\!,\beta} - \lambda)^{-1} -(A_{\rm N,i,e} - \lambda)^{-1} =
\wh\gamma(\lambda)\bigl(I - \beta^{-1}\wh M(\lambda)\bigr)^{-1}\beta^{-1}\,\wh\gamma(\ov\lambda)^*,
\]
where $(I- \beta^{-1}\wh M(\lambda))^{-1}\in\cB(L^2(\Sigma))$.
\item[\rm (iii)] For all $\lambda\in\dR\setminus\sigma(A_{\rm N,i,e})$ we have
\[\lambda\in\sigma_p(A_{\delta^\prime\!,\beta})\quad\Longleftrightarrow \quad
0\in\sigma_p\bigl(I - \beta^{-1}\wh M(\lambda)\bigr)\]
and $\dim\ker(A_{\delta^\prime\!,\beta}-\lambda)=\dim\ker (I - \beta^{-1}\wh M(\lambda))$.
\end{itemize}
\end{thm}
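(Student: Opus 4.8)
The plan is to follow the same route as in the proof of Theorem~\ref{thm:delta}: apply the abstract Krein formula and Birman--Schwinger principle of Theorem~\ref{thm:krein} to the quasi boundary triple $\wh\Pi=\{L^2(\Sigma),\wh\Gamma_0,\wh\Gamma_1\}$ for $\wh A^*$ from Proposition~\ref{prop:qbt2}, whose associated reference operator $A_0=\wh T\upharpoonright\ker\wh\Gamma_0$ is the self-adjoint operator $A_{\rm N,i,e}$ from \eqref{ADANoplus2}; this is why the comparison operator in (ii) and (iii) is $A_{\rm N,i,e}$ rather than $A_{\rm free}$.

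First I would put the boundary condition of Definition~\ref{def:deltapr} into the form required by Theorem~\ref{thm:krein}. Since $\beta$ is real-valued and $1/\beta\in L^\infty(\Sigma)$, the operator $B$ of multiplication by $\beta^{-1}$ on $L^2(\Sigma)$ is everywhere defined, bounded, bijective and self-adjoint, i.e.\ $B=B^*\in\cB(L^2(\Sigma))$; set $\Theta\defeq B^{-1}$. As $\wh\Gamma_1f\in H^1(\Sigma)\subset L^2(\Sigma)$, the condition $\wh\Gamma_1f=\beta\wh\Gamma_0f$ is equivalent to $B\wh\Gamma_1f=\wh\Gamma_0f$, so that $\ker(\wh\Gamma_1-\beta\wh\Gamma_0)=\ker(B\wh\Gamma_1-\wh\Gamma_0)$ and hence $A_{\delta'\!,\beta}$ coincides with the extension $A_\Theta=\wh T\upharpoonright\ker(B\wh\Gamma_1-\wh\Gamma_0)$ of \eqref{atb}.

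Next I would verify the two additional hypotheses in the last part of Theorem~\ref{thm:krein}. By Proposition~\ref{prop:qbt2}\,(i) the range $\ran\wh\Gamma_0$ is all of $L^2(\Sigma)$, and by Proposition~\ref{prop:qbt2}\,(iii) the value $\wh M(\lambda_0)$ of the Weyl function is a compact operator in $L^2(\Sigma)$ for every $\lambda_0\in\rho(A_{\rm N,i,e})$, in particular for some $\lambda_0\in\CC\setminus\RR$ (note $\CC\setminus\RR\subset\rho(A_{\rm N,i,e})$ by self-adjointness of $A_{\rm N,i,e}$). Theorem~\ref{thm:krein} then yields immediately that $A_{\delta'\!,\beta}=A_\Theta$ is self-adjoint in $L^2(\dR^n)$, which is (i); that Krein's formula $(A_{\delta'\!,\beta}-\lambda)^{-1}-(A_{\rm N,i,e}-\lambda)^{-1}=\wh\gamma(\lambda)(I-\beta^{-1}\wh M(\lambda))^{-1}\beta^{-1}\,\wh\gamma(\ov\lambda)^*$ holds for all $\lambda\in\rho(A_{\delta'\!,\beta})\cap\rho(A_{\rm N,i,e})$ (a nonempty set, since both operators are self-adjoint) with $(I-\beta^{-1}\wh M(\lambda))^{-1}\in\cB(L^2(\Sigma))$, which is (ii); and, via part (i) of Theorem~\ref{thm:krein} applied at $\lambda\in\RR\setminus\sigma(A_{\rm N,i,e})\subset\rho(A_0)$, that $\lambda\in\sigma_p(A_{\delta'\!,\beta})$ if and only if $\ker(I-\beta^{-1}\wh M(\lambda))\neq\{0\}$, with $\dim\ker(A_{\delta'\!,\beta}-\lambda)=\dim\ker(I-\beta^{-1}\wh M(\lambda))$ (both finite, as $\beta^{-1}\wh M(\lambda)$ is compact), which is (iii).

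I do not expect a genuine obstacle here: all the analytic substance — that $\wh\Pi$ is a quasi boundary triple, the surjectivity $\ran\wh\Gamma_0=L^2(\Sigma)$, and the compactness (in fact the $L^2(\Sigma)\to H^1(\Sigma)$ smoothing) of $\wh M(\lambda)$ — is already provided by Proposition~\ref{prop:qbt2}. The only points needing care are bookkeeping: identifying $A_0$ with $A_{\rm N,i,e}$ so that the formulas in (ii)–(iii) are stated relative to the correct self-adjoint operator, and checking that the boundary parameter enters as the \emph{bounded, everywhere defined} operator $B=\beta^{-1}$ (this uses $1/\beta\in L^\infty(\Sigma)$, not $\beta\in L^\infty$), so that no linear-relation formalism is required and Krein's formula holds as an identity in $\cB(L^2(\dR^n))$.
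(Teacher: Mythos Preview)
Your proposal is correct and follows exactly the route the paper takes: the paper merely states that Theorem~\ref{thm:delta'} ``is the counterpart of Theorem~\ref{thm:delta} and can be proved in the same way,'' and the proof of Theorem~\ref{thm:delta} consists precisely of observing that the multiplication operator ($\alpha$ there, $\beta^{-1}$ here) is bounded self-adjoint on $L^2(\Sigma)$, that the Weyl function has compact values by Proposition~\ref{prop:qbt2}\,(iii), and then invoking Theorem~\ref{thm:krein}. Your write-up is in fact more detailed than the paper's, correctly flagging the identification $A_0=A_{\rm N,i,e}$ and the need for $1/\beta\in L^\infty(\Sigma)$ rather than $\beta\in L^\infty(\Sigma)$.
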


The next theorem gives assumptions on $\beta$ which ensure that the domain of
the self-adjoint operator $A_{\delta'\!,\beta}$ has $H^2$-regularity.
This theorem is the second part of Theorem~B in the introduction.

\begin{thm}
\label{thm:H2delta'}
Let $A_{\delta'\!,\beta}$ be the self-adjoint Schr\"odinger operator in
Definition~\ref{def:deltapr} and assume, in addition, that
the function $\beta\colon\Sigma\rightarrow\dR$ is such
that $1/\beta\in W^{1,\infty}(\Sigma)$. Then $\dom A_{\delta'\!,\beta}$ is
contained in $H^2(\dR^n\backslash\Sigma)$.
\end{thm}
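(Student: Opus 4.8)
The plan is to follow the proof of Theorem~\ref{thm:H2delta} almost line by line, with the quasi boundary triple $\wt\Pi$ replaced by $\wh\Pi$ from Proposition~\ref{prop:qbt2}, the self-adjoint operator $A_{\rm free}$ replaced by the Neumann operator $A_{\rm N,i,e}$, and the multiplier $\alpha$ replaced by $1/\beta$.

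First I would fix an $f=f_{\rm i}\oplus f_{\rm e}\in\dom A_{\delta'\!,\beta}$. Since $f\in\dom\wh T\subset H^{3/2}_\Delta(\dR^n\backslash\Sigma)$, the extended trace map in~\eqref{LMtrace} gives $f_{\rm i}|_\Sigma,f_{\rm e}|_\Sigma\in H^1(\Sigma)$, hence $\wh\Gamma_1 f=f_{\rm e}|_\Sigma-f_{\rm i}|_\Sigma\in H^1(\Sigma)$. As $1/\beta\in L^\infty(\Sigma)$, the second interface condition in~\eqref{eq:delta'} may be divided by $\beta$ to yield $\wh\Gamma_0 f=\partial_{\nu_{\rm e}}f_{\rm e}|_\Sigma=\frac1\beta\,\wh\Gamma_1 f$, and then the hypothesis $1/\beta\in W^{1,\infty}(\Sigma)$ together with the second implication in~\eqref{W1Hk} gives $\wh\Gamma_0 f\in H^1(\Sigma)\subset H^{1/2}(\Sigma)$.

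Next I would fix $\lambda\in\dC\setminus\dR\subset\rho(A_{\rm N,i,e})$ and use the standard decomposition $\dom\wh T=\dom A_{\rm N,i,e}\,\dotplus\,\ker(\wh T-\lambda)$, which is available since $\wh T\upharpoonright\ker\wh\Gamma_0=A_{\rm N,i,e}$ by Proposition~\ref{prop:qbt2}\,(i). Writing $f=f_{\rm N}+f_\lambda$ with $f_{\rm N}\in\dom A_{\rm N,i,e}\subset H^2(\dR^n\backslash\Sigma)$ (see~\eqref{ADANoplus2}) and $f_\lambda\in\ker(\wh T-\lambda)$, and using $\wh\Gamma_0 f_{\rm N}=0$, I get $\wh\Gamma_0 f_\lambda=\wh\Gamma_0 f\in H^{1/2}(\Sigma)$. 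The surjectivity of the trace map in~\eqref{Sobtrace} shows that $\wh\Gamma_0$ maps $\dom\wh T\cap H^2(\dR^n\backslash\Sigma)$ onto $H^{1/2}(\Sigma)$; combining this with the decomposition above and with the injectivity of $\wh\Gamma_0\upharpoonright\ker(\wh T-\lambda)$ (which holds because $\wh\gamma(\lambda)=(\wh\Gamma_0\upharpoonright\ker(\wh T-\lambda))^{-1}$ by Definition~\ref{def:gammaWeyl}), one obtains that $\wh\Gamma_0$ restricts to a bijection from $\ker(\wh T-\lambda)\cap H^2(\dR^n\backslash\Sigma)$ onto $H^{1/2}(\Sigma)$. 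Consequently $f_\lambda\in H^2(\dR^n\backslash\Sigma)$, and therefore $f=f_{\rm N}+f_\lambda\in H^2(\dR^n\backslash\Sigma)$.

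There is no genuine obstacle here; the proof is the $\delta$-case with two small but essential changes. The reference operator for the decomposition must now be $A_{\rm N,i,e}$ rather than $A_{\rm free}$, because it is the Neumann operator that equals $\wh T\upharpoonright\ker\wh\Gamma_0$ for the triple $\wh\Pi$. And one must \emph{divide} the interface condition by $\beta$, so that the multiplier property~\eqref{W1Hk} is applied to $1/\beta$; this is precisely why the natural smoothness hypothesis is $1/\beta\in W^{1,\infty}(\Sigma)$. As in the comment after Theorem~\ref{thm:H2delta}, it would in fact be enough that multiplication by $1/\beta$ maps $H^1(\Sigma)$ into $H^{1/2}(\Sigma)$.
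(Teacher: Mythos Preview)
Your proof is correct and follows exactly the approach the paper takes: the paper's own proof simply says to repeat the argument of Theorem~\ref{thm:H2delta} with $A_{\rm free}$, $\wt T$, $\wt\Gamma_0$, $\wt\Gamma_1$, $\alpha$ replaced by $A_{\rm N,i,e}$, $\wh T$, $\wh\Gamma_0$, $\wh\Gamma_1$, $\beta^{-1}$ and to use the decomposition $\dom\wh T=\dom A_{\rm N,i,e}\dotplus\ker(\wh T-\lambda)$. You have carried this out in full detail, including the observation that it suffices for multiplication by $1/\beta$ to map $H^1(\Sigma)$ into $H^{1/2}(\Sigma)$.
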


\begin{proof}
The proof proceeds as the proof of Theorem~\ref{thm:H2delta}
with $A_{\delta,\alpha}$, $A_{\rm free}$, $\wt T$, $\wt \Gamma_0$, $\wt \Gamma_1$ and $\alpha$
replaced by
$A_{\delta'\!,\beta}$, $A_{\rm N,i,e}$, $\wh T$, $\wh \Gamma_0$, $\wh \Gamma_1$ and $\beta^{-1}$, respectively.
Instead of the decomposition~\eqref{decomposition2} one has to use the decomposition
\[
  \dom \wh T = \dom A_{\rm N,i,e} \dotplus \ker(\wh T-\lambda),\qquad\lambda\in\dC\setminus\dR.
\]
\end{proof}

\subsection{Semi-boundedness and point spectra}
\label{sec:finiteness}

In this section we show that the self-adjoint operators $A_{\delta,\alpha}$
and $A_{\delta^\prime\!,\beta}$ are lower semi-bounded, and that in the
case $V\equiv 0$ their negative spectra are finite.
We recall some preparatory facts on semi-bounded quadratic forms first.

\begin{dfn}
For a (not necessarily closed or semi-bounded) quadratic form $\frq$ in a
Hilbert space $\cH$  we define \emph{the number of negative squares}
$\kappa_-(\frq)$ by
\[
\begin{split}
& \kappa_-(\frq) \defeq \sup\big\{\dim F\colon\! F \text{ linear subspace
  of $\dom \frq$}\\
&\qquad\qquad\qquad\qquad\qquad\qquad\qquad\qquad\text{such that}\,\forall\, f\in F\setminus\{0\}\colon\! \frq[f] < 0\big\}.
\end{split}
\]
\end{dfn}
Assume that $A$ is a (not necessarily semi-bounded) self-adjoint operator
in a Hilbert space $\cH$ with the corresponding spectral measure $E_A(\cdot)$.
Define the possibly non-closed quadratic form $\frs_A$ by
\[
 \frs_A[f] \defeq (Af,f)_{\cH}, \quad \dom \frs_A \defeq \dom A.
\]
If, in addition, $A$ is semi-bounded, then by \cite[Theorem~VI.1.27]{K95}
the form $\frs_A$ is closable, and we denote its closure by $\ov{\frs_A}$.
According to the spectral theorem for self-adjoint operators
and~\cite[10.2 Theorem 3]{BS87}
\begin{equation}
\label{BS}
  \dim \ran E_A(-\infty,0) = \kappa_-(\frs_A) = \kappa_-(\ov{\frs_A}).
\end{equation}
In particular, if $\kappa_-(\frs_A)$ is finite, then the self-adjoint
operator $A$ has finitely many negative eigenvalues with finite multiplicities.

In the case $V\equiv 0$ we write $-\Delta_{\delta,\alpha}$,
$-\Delta_{\delta'\!,\beta}$ and $-\Delta_{\rm free}$
instead of $A_{\delta,\alpha}$, $A_{\delta'\!,\beta}$ and $A_{\rm free}$.
Now we are ready to formulate and prove the main results of this section.
The next theorem is part of Theorem~A in the introduction.
We mention that finiteness of the negative spectrum in the case of
$\delta$-potentials on hypersurfaces was also shown in~\cite{BEKS94}
by other methods.

\begin{thm}
\label{thm:smb}
Let $\alpha, \beta \colon \Sigma\rightarrow \dR$ be such
that $\alpha, 1/\beta\in L^\infty(\Sigma)$ and let the self-adjoint
operators $-\Delta_{\delta,\alpha}$ and $-\Delta_{\delta'\!,\beta}$ be as
above.
Then the following statements hold.
\begin{itemize}\setlength{\itemsep}{1.2ex}
\item[(i)]
$\sess(-\Delta_{\delta,\alpha}) = \sess(-\Delta_{\delta'\!,\beta}) =
[0,\infty)$.
\item[(ii)]
The self-adjoint operators $-\Delta_{\delta,\alpha}$ and
$-\Delta_{\delta'\!,\beta}$
have finitely many negative eigenvalues with finite multiplicities.
\end{itemize}
\end{thm}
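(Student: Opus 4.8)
The plan is to derive part (i) from the compactness of the resolvent differences already available from Theorems~\ref{thm:delta} and~\ref{thm:delta'}, and part (ii) from a Birman--Schwinger/quadratic-form argument that reduces the counting of negative eigenvalues to an interior and an exterior Robin Laplacian.

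For part (i) I would start from Krein's formula in Theorem~\ref{thm:delta}(ii),
\[
  (-\Delta_{\delta,\alpha}-\lambda)^{-1}-(A_{\rm free}-\lambda)^{-1}
  =\wt\gamma(\lambda)\bigl(I-\alpha\wt M(\lambda)\bigr)^{-1}\alpha\,\wt\gamma(\ov\lambda)^*,
\]
and observe that $\wt\gamma(\ov\lambda)^*=\wt\Gamma_1(A_{\rm free}-\lambda)^{-1}$ is bounded from $L^2(\dR^n)$ into $L^2(\Sigma)$ with range contained in $H^{3/2}(\Sigma)$ (because $(A_{\rm free}-\lambda)^{-1}$ has range $H^2(\dR^n)$ and $H^2$-traces lie in $H^{3/2}(\Sigma)$); hence by Lemma~\ref{le.s_emb} this factor, and therefore the whole resolvent difference, is compact, and Weyl's theorem gives $\sess(-\Delta_{\delta,\alpha})=\sess(A_{\rm free})=[0,\infty)$. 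The same reasoning with Theorem~\ref{thm:delta'}(ii) shows that $(-\Delta_{\delta'\!,\beta}-\lambda)^{-1}-(A_{\rm N,i,e}-\lambda)^{-1}$ is compact, so $\sess(-\Delta_{\delta'\!,\beta})=\sess(A_{\rm N,i,e})$; since (with $V\equiv0$) $A_{\rm N,i,e}=A_{\rm N,i}\oplus A_{\rm N,e}$ with $A_{\rm N,i}$ of compact resolvent on the bounded domain $\Omega_{\rm i}$ and the exterior Neumann Laplacian $A_{\rm N,e}\ge0$ having $\sess(A_{\rm N,e})=[0,\infty)$, this yields $\sess(A_{\rm N,i,e})=[0,\infty)$.

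For part (ii) I would use \eqref{BS}: the number of negative eigenvalues of $-\Delta_{\delta,\alpha}$ (with multiplicity) equals $\kappa_-(\fra_{\delta,\alpha})$ with $\fra_{\delta,\alpha}$ the form in~\eqref{sform} (and $V\equiv0$), and that of $-\Delta_{\delta'\!,\beta}$ equals $\kappa_-$ of the corresponding closed form $\frt_{\delta'\!,\beta}[f]=\|\nabla f_{\rm i}\|^2_{L^2(\Omega_{\rm i})}+\|\nabla f_{\rm e}\|^2_{L^2(\Omega_{\rm e})}+(\beta^{-1}(f_{\rm e}|_\Sigma-f_{\rm i}|_\Sigma),f_{\rm e}|_\Sigma-f_{\rm i}|_\Sigma)_\Sigma$ on $H^1(\dR^n\setminus\Sigma)$ (cf.\ Proposition~\ref{prop:deltapform}). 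If $\alpha\le0$, resp.\ $1/\beta\ge0$, a.e.\ these forms are non-negative and there is nothing to prove; otherwise, using $-(\alpha f|_\Sigma,f|_\Sigma)_\Sigma\ge-\|\alpha\|_{L^\infty}\|f|_\Sigma\|_\Sigma^2$, resp.\ $(\beta^{-1}(f_{\rm e}|_\Sigma-f_{\rm i}|_\Sigma),f_{\rm e}|_\Sigma-f_{\rm i}|_\Sigma)_\Sigma\ge-2\|1/\beta\|_{L^\infty}(\|f_{\rm i}|_\Sigma\|_\Sigma^2+\|f_{\rm e}|_\Sigma\|_\Sigma^2)$, one bounds both forms below on their domains by $\mathfrak{b}_{\rm i}[f_{\rm i}]+\mathfrak{b}_{\rm e}[f_{\rm e}]$, where $\mathfrak{b}_j[g]=\|\nabla g\|^2_{L^2(\Omega_j)}-c'\|g|_\Sigma\|^2_\Sigma$ on $H^1(\Omega_j)$ with a suitable $c'>0$; since $H^1(\dR^n)$ and $H^1(\dR^n\setminus\Sigma)$ sit inside $\dom\mathfrak{b}_{\rm i}\oplus\dom\mathfrak{b}_{\rm e}$, the definition of $\kappa_-$ gives that the negative eigenvalue count is at most $\kappa_-(\mathfrak{b}_{\rm i})+\kappa_-(\mathfrak{b}_{\rm e})$. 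Here $\mathfrak{b}_{\rm i}$ is lower semi-bounded and closed with form domain $H^1(\Omega_{\rm i})$, which embeds compactly into $L^2(\Omega_{\rm i})$, so the associated operator has compact resolvent and only finitely many negative eigenvalues; and for the exterior Robin form $\mathfrak{b}_{\rm e}$ I would invoke the Birman--Schwinger principle relative to $A_{\rm N,e}$, by which the number of eigenvalues below $-\mu$ (for $\mu>0$) of the operator associated with $\mathfrak{b}_{\rm e}$ equals the number of eigenvalues $\ge1$ of the compact non-negative operator $c'M_{\rm e}(-\mu)$ on $L^2(\Sigma)$, with $M_{\rm e}$ the exterior Neumann-to-Dirichlet map of Proposition~\ref{prop:qbt0}; this count is non-decreasing as $\mu\downarrow0$ by monotonicity of $M_{\rm e}(-\mu)$, and its limit is finite since $M_{\rm e}(-\mu)$ converges, up to a summand of rank at most one, to a bounded compact operator. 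Hence $\kappa_-(\mathfrak{b}_{\rm e})<\infty$, and both operators have finitely many negative eigenvalues, automatically of finite multiplicity.

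The routine parts are the resolvent-difference compactness, Weyl's theorem, the interior compact-resolvent estimate, and the elementary form estimates; the main obstacle is the exterior piece. Controlling the negative-eigenvalue count of the exterior Robin Laplacian \emph{uniformly down to the threshold $0$} requires understanding the behaviour of $M_{\rm e}(\lambda)$ as $\lambda\uparrow0$: for $n\ge3$ it converges to a bounded compact operator and the argument is immediate, whereas for $n=2$ there is a logarithmic singularity concentrated on a one-dimensional subspace, which is harmless for counting but must be separated off first. (Finiteness of the negative spectrum in the $\delta$-case is also contained in~\cite{BEKS94}.)
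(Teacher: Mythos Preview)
Your argument for part~(i) is correct and essentially the same as the paper's: both rely on the compactness of the resolvent differences coming from Krein's formula. The paper invokes the stronger $\sS_{p,\infty}$-estimates of Theorems~\ref{thm:S_infty2} and~\ref{thm:S_infty4} (which in turn rest on the same Krein formulas), and compares $-\Delta_{\delta'\!,\beta}$ directly with $A_{\rm free}$ rather than going through $A_{\rm N,i,e}$, but your more direct route is perfectly adequate for Weyl's theorem.

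For part~(ii) you and the paper make the \emph{same} reduction: bound the relevant quadratic form from below by the decoupled Robin-type form $\frq_\sigma[f]=\|\nabla f\|^2-(\sigma f_{\rm i}|_\Sigma,f_{\rm i}|_\Sigma)_\Sigma-(\sigma f_{\rm e}|_\Sigma,f_{\rm e}|_\Sigma)_\Sigma$ on $H^1(\dR^n\setminus\Sigma)$ (your $\mathfrak b_{\rm i}\oplus\mathfrak b_{\rm e}$), and then use monotonicity of $\kappa_-$. The difference is that the paper simply cites Birman \cite[Theorem~6.9]{B62} for the fact that the operator associated with $\frq_\sigma$ has finite negative spectrum, whereas you try to reprove this via Birman--Schwinger for the exterior Robin piece. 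Your interior argument and your $n\ge 3$ exterior argument are fine; however, the $n=2$ threshold analysis---that $M_{\rm e}(-\mu)$ has, modulo a rank-one logarithmically diverging part on the constant mode, a compact limit as $\mu\downarrow 0$---is only asserted, not proved. This is precisely the nontrivial content of the Birman result you are reproducing, so as written your $n=2$ case has a genuine gap; either fill in the low-energy asymptotics of the exterior Neumann-to-Dirichlet map in two dimensions, or cite~\cite{B62} as the paper does. (A minor point: your $\delta'$-form carries the wrong sign in front of the boundary term; compare Proposition~\ref{prop:deltapform}. The estimate and the conclusion are unaffected, but the ``trivial'' sign condition is $1/\beta\le 0$, not $1/\beta\ge 0$.)
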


\begin{proof}
(i)
According to Theorem~\ref{thm:S_infty2} in Section~\ref{sec:pwrdiff} below
the resolvent difference of the self-adjoint operators
$-\Delta_{\delta,\alpha}$
and $-\Delta_{\rm free}$ is compact; thus
\[
 \sess(-\Delta_{\delta,\alpha}) = \sess(-\Delta_{\rm free}) = [ 0,
\infty).
\]
Analogously, according to Theorem~\ref{thm:S_infty4} below the resolvent
difference
of the self-adjoint operators $-\Delta_{\delta'\!,\beta}$ and
$-\Delta_{\rm free}$ is also compact.
Hence
\[
 \sess(-\Delta_{\delta'\!,\beta}) = \sess(-\Delta_{\rm free}) = [0,\infty).
\]

(ii)
Let us introduce the (in general non-closed) quadratic forms
\begin{alignat*}{2}
 \frs_{-\Delta_{\delta,\alpha}}[f] \defequ
\bigl(-\Delta_{\delta,\alpha}f,f\bigr),\quad &
 \dom (\frs_{-\Delta_{\delta,\alpha}}) \defequ \dom
(-\Delta_{\delta,\alpha}), \\[0.5ex]
 \frs_{-\Delta_{\delta'\!,\beta}}[f] \defequ
\bigl(-\Delta_{\delta^\prime\!,\beta}f,f\bigr), \quad &
 \dom (\frs_{-\Delta_{\delta'\!,\beta}}) \defequ \dom
(-\Delta_{\delta^\prime\!,\beta}).
\end{alignat*}
Applying the first Green's identity~\eqref{half_green} to these
expressions and taking
the definitions \eqref{eq:delta}, \eqref{eq:delta'} of the domains of the
operators $-\Delta_{\delta,\alpha}$, $-\Delta_{\delta^\prime\!,\beta}$
into account we obtain
\begin{align*}
  \frs_{-\Delta_{\delta,\alpha}}[f]
  &= \bigl(-\Delta f_{\rm i},f_{\rm i}\bigr)_{\rm i} + \bigl(-\Delta f_{\rm e},
  f_{\rm e}\bigr)_{\rm e} \\[0.5ex]
  &= \bigl(\nabla f_{\rm i},\nabla f_{\rm i}\bigr)_{\rm i}
  - \bigl(\partial_{\nu_{\rm i}}f_{\rm i}|_\Sigma,f_{\rm i}|_\Sigma\bigr)_\Sigma
  + \bigl(\nabla f_{\rm e},\nabla f_{\rm e}\bigr)_{\rm e}
  - \bigl(\partial_{\nu_{\rm e}}f_{\rm e}|_\Sigma,f_{\rm e}|_\Sigma\bigr)_\Sigma \\[0.5ex]
  &= \bigl(\nabla f,\nabla f\bigr) - \bigl(\alpha f|_\Sigma,f|_\Sigma\bigr)_\Sigma
\end{align*}
and
\begin{align*}
  \frs_{-\Delta_{\delta'\!,\beta}}[f]
  &= \bigl(-\Delta f_{\rm i},f_{\rm i}\bigr)_{\rm i} + \bigl(-\Delta f_{\rm e},
  f_{\rm e}\bigr)_{\rm e} \\[0.5ex]
  &= \bigl(\nabla f_{\rm i},\nabla f_{\rm i}\bigr)_{\rm i}
  \!-\! \bigl(\partial_{\nu_{\rm i}}f_{\rm i}|_\Sigma,f_{\rm i}|_\Sigma\bigr)_\Sigma
  + \bigl(\nabla f_{\rm e},\nabla f_{\rm e}\bigr)_{\rm e}
  - \bigl(\partial_{\nu_{\rm e}}f_{\rm e}|_\Sigma,f_{\rm e}|_\Sigma\bigr)_\Sigma \\[0.5ex]
  &= (\nabla f,\nabla f)\! +\! \bigl(\beta^{-1}(f_{\rm e}|_\Sigma\!-\!f_{\rm i}|_\Sigma),
  f_{\rm i}|_\Sigma\bigr)_\Sigma\!
  -\!\bigl(\beta^{-1}(f_{\rm e}|_\Sigma\!-\!f_{\rm i}|_\Sigma),f_{\rm e}|_\Sigma\bigr)_\Sigma \\[0.5ex]
  &= \bigl(\nabla f, \nabla f \bigr) -\bigl(\beta^{-1}
  (f_{\rm e}|_\Sigma-f_{\rm i}|_\Sigma),f_{\rm e}|_\Sigma-f_{\rm i}|_\Sigma\bigr)_\Sigma.
\end{align*}
For a bounded function $\sigma\colon\Sigma\rightarrow\dR$ define the
quadratic form $\frq_\sigma$
\[
\begin{split}
 \frq_\sigma[f] \defequ \big(\nabla f, \nabla f\big) -
 \big(\sigma f_{\rm i}|_\Sigma,f_{\rm i}|_\Sigma\big)_\Sigma
 - \big(\sigma f_{\rm e}|_\Sigma,f_{\rm e}|_\Sigma\big)_\Sigma,\\
 \dom \frq_\sigma \defequ H^1(\dR^n\backslash\Sigma).
\end{split}
\]
It follows from \cite[Theorem~6.9]{B62} (cf.\ the proof of
Proposition~\ref{prop:deltapform} below) that the form $\frq_\sigma$ is
closed and semi-bounded, and the self-adjoint operator
corresponding to $\frq_\sigma$ has finitely many negative eigenvalues
with finite multiplicities. Thus, by~\eqref{BS}, we have
$\kappa_-(\frq_\sigma)< \infty$.
It can easily be checked that
\[
 \dom( \frs_{-\Delta_{\delta,\alpha}}) \subset \dom (\frq_{|\alpha|/2})
\! \quad\text{and}\quad\!
 \forall\, f\in \dom(\frs_{-\Delta_{\delta,\alpha}})\colon
 \frs_{-\Delta_{\delta,\alpha}}[f] \ge  \frq_{|\alpha|/2}[f].
\]
Using the inequality  $|a-b|^2 \le 2(|a|^2+|b|^2)$ for complex numbers
$a,b$ we obtain
\[
 \dom (\frs_{-\Delta_{\delta'\!,\beta}}) \subset \dom (\frq_{2/|\beta|})
\!\! \quad\text{and}\quad\!\!
 \forall\, f\in \dom (\frs_{-\Delta_{\delta'\!,\beta}})\colon
 \frs_{-\Delta_{\delta'\!,\beta}}[f]\ge  \frq_{2/|\beta|}[f].
\]
These observations yield that
\[
 \kappa_-(\frs_{-\Delta_{\delta,\alpha}})\le
\kappa_-(\frq_{|\alpha|/2}) < \infty
 \quad\text{and}\quad
 \kappa_-(\frs_{-\Delta_{\delta'\!,\beta}}) \le
\kappa_-(\frq_{2/|\beta|}) < \infty.
\]
From this and \eqref{BS} it follows that the negative spectra of
$-\Delta_{\delta,\alpha}$
and $-\Delta_{\delta'\!,\beta}$ are finite.
\end{proof}

In the following proposition the (closed) sesquilinear form $\mathfrak
t_{-\Delta_{\delta'\!,\beta}}$ which induces
the self-adjoint operator $-\Delta_{\delta'\!,\beta}$ is determined. This
was posed as an open problem in \cite[7.2]{E08}.
Note that, by the first representation theorem,
$\mathfrak t_{-\Delta_{\delta'\!,\beta}}$ is the closure of the form
$$\frs_{-\Delta_{\delta'\!,\beta}}[f,g]=(-\Delta_{\delta'\!,\beta}f,g),
\qquad f,g\in\dom(-\Delta_{\delta'\!,\beta}),$$
defined in the proof of Theorem~\ref{thm:smb}.
For completeness we mention that Proposition~\ref{prop:deltapform} extends naturally
to the Schr\"odinger operator $A_{\delta^\prime\!,\beta}$ with non-trivial $V\in L^\infty(\dR^n)$
and the corresponding quadratic form.

\begin{prop}\label{prop:deltapform}
The sesquilinear form
\begin{equation*}
  \fra_{\delta'\!,\beta}[f,g]
  \defeq \bigl(\nabla f, \nabla g \bigr) - \bigl(\beta^{-1}
  (f_{\rm e}|_\Sigma-f_{\rm i}|_\Sigma),g_{\rm e}|_\Sigma-g_{\rm i}|_\Sigma\bigr)_\Sigma
\end{equation*}
defined for $f,g\in H^1(\dR^n\backslash\Sigma)$
is symmetric, closed and semi-bounded from below. The self-adjoint
operator corresponding to
$\fra_{\delta'\!,\beta}$ is $-\Delta_{\delta'\!,\beta}$, i.e.\
\begin{equation*}
  (-\Delta_{\delta'\!,\beta}f,g)= \fra_{\delta'\!,\beta}[f,g]
\end{equation*}
holds for all $f\in\dom(-\Delta_{\delta'\!,\beta})$ and $g\in
H^1(\dR^n\backslash\Sigma)$.
\end{prop}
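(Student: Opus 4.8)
The plan is to verify the three standard hypotheses of the first representation theorem for $\fra_{\delta'\!,\beta}$ --- symmetry, semi-boundedness, closedness --- and then to identify the associated self-adjoint operator with $-\Delta_{\delta'\!,\beta}$. Symmetry is immediate from the explicit expression, since $\beta$ is real-valued. For semi-boundedness and closedness I would reduce to a known result: with $\sigma = 1/|\beta| \in L^\infty(\Sigma)$, the inequality $|a-b|^2 \le 2(|a|^2 + |b|^2)$ gives $|(\beta^{-1}(f_{\rm e}|_\Sigma - f_{\rm i}|_\Sigma), f_{\rm e}|_\Sigma - f_{\rm i}|_\Sigma)_\Sigma| \le 2\,\frq_\sigma$-type boundary terms, and one invokes \cite[Theorem~6.9]{B62} exactly as in the proof of Theorem~\ref{thm:smb}: the boundary term is form-bounded with respect to $\|\nabla f\|^2$ with relative bound zero (this uses the compactness of the trace map $H^1(\Omega_j) \to L^2(\Sigma)$ and the interpolation/trace inequality $\|f_j|_\Sigma\|_{L^2(\Sigma)}^2 \le \eps \|\nabla f_j\|^2 + C_\eps \|f_j\|^2$), so $\fra_{\delta'\!,\beta}$ is a bounded-below, closed perturbation of the closed form $f \mapsto \|\nabla f\|^2_{L^2(\dR^n\setminus\Sigma)}$ on $H^1(\dR^n\setminus\Sigma)$. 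Hence a unique lower-semibounded self-adjoint operator, call it $B$, corresponds to $\fra_{\delta'\!,\beta}$.

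It then remains to show $B = -\Delta_{\delta'\!,\beta}$. The computation in the proof of Theorem~\ref{thm:smb} already shows, via the first Green's identity \eqref{half_green}, that $(-\Delta_{\delta'\!,\beta}f, f) = \fra_{\delta'\!,\beta}[f]$ for all $f \in \dom(-\Delta_{\delta'\!,\beta})$. To upgrade this to the sesquilinear identity against all test functions $g \in H^1(\dR^n\setminus\Sigma)$, I would repeat the Green's identity argument on the interior and exterior pieces separately: for $f = f_{\rm i}\oplus f_{\rm e} \in \dom(-\Delta_{\delta'\!,\beta}) \subset H^{3/2}_\Delta(\dR^n\setminus\Sigma)$ and arbitrary $g = g_{\rm i}\oplus g_{\rm e} \in H^1(\dR^n\setminus\Sigma)$,
\begin{align*}
  (-\Delta_{\delta'\!,\beta}f, g)
  &= (\nabla f_{\rm i},\nabla g_{\rm i})_{\rm i} - (\partial_{\nu_{\rm i}}f_{\rm i}|_\Sigma, g_{\rm i}|_\Sigma)_\Sigma
   + (\nabla f_{\rm e},\nabla g_{\rm e})_{\rm e} - (\partial_{\nu_{\rm e}}f_{\rm e}|_\Sigma, g_{\rm e}|_\Sigma)_\Sigma,
\end{align*}
and then I insert the boundary conditions $\partial_{\nu_{\rm e}}f_{\rm e}|_\Sigma = -\partial_{\nu_{\rm i}}f_{\rm i}|_\Sigma$ and $\beta\,\partial_{\nu_{\rm e}}f_{\rm e}|_\Sigma = f_{\rm e}|_\Sigma - f_{\rm i}|_\Sigma$ from \eqref{eq:delta'} to rewrite the two boundary integrals as a single term $-(\beta^{-1}(f_{\rm e}|_\Sigma - f_{\rm i}|_\Sigma), g_{\rm e}|_\Sigma - g_{\rm i}|_\Sigma)_\Sigma$. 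This gives $(-\Delta_{\delta'\!,\beta}f,g) = \fra_{\delta'\!,\beta}[f,g]$ for all such $g$, so by the first representation theorem (\cite[Theorem~VI.2.1]{K95}) $-\Delta_{\delta'\!,\beta} \subset B$. Since both operators are self-adjoint, they coincide; this is the same endgame as in Proposition~\ref{prop:exner}.

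The main obstacle is the closedness of $\fra_{\delta'\!,\beta}$, i.e.\ establishing that the boundary quadratic form is infinitesimally form-bounded relative to the Dirichlet form on $H^1(\dR^n\setminus\Sigma)$ --- this is where one genuinely needs the compactness/interpolation properties of traces on the smooth compact hypersurface $\Sigma$ and the reference to \cite{B62}. The remaining pieces (symmetry, the Green's-identity identification of the operator, and the self-adjointness uniqueness argument) are routine. One should also note in passing that $\dom \fra_{\delta'\!,\beta} = H^1(\dR^n\setminus\Sigma)$ is strictly larger than $H^1(\dR^n)$, which is consistent with the earlier remark that $\dom(-\Delta_{\delta'\!,\beta}) \not\subset H^1(\dR^n)$, and explains why the naive form \eqref{sform}-type approach with domain $H^1(\dR^n)$ fails in the $\delta'$ case.
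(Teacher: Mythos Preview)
Your proposal is correct and follows essentially the same route as the paper: decompose $\fra_{\delta'\!,\beta}$ as the Dirichlet form on $H^1(\dR^n\setminus\Sigma)$ plus a boundary perturbation, show the latter is relatively form-bounded with bound $<1$ via a trace inequality (the paper does this explicitly with Ehrling's lemma and a localization near $\Sigma$ on the unbounded exterior domain, whereas you outsource it to \cite{B62}), and then identify the associated operator via Green's identity and the first representation theorem, as in Proposition~\ref{prop:exner}. Your identification step is actually spelled out more fully than the paper's, which simply points back to the $A_{\delta,\alpha}$ argument.
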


\begin{proof}
Since $\beta$ is a real-valued function, it follows that the form
$\fra_{\delta'\!,\beta}$ is symmetric.
In order to show that it is closed and semi-bounded, we consider the forms
\begin{equation*}
  \fra[f,g] \defeq (\nabla f,\nabla g)\quad\text{and}\quad\mathfrak
  \fra^\prime[f,g]\defeq -\bigl(\beta^{-1}
  (f_{\rm e}|_\Sigma-f_{\rm i}|_\Sigma),g_{\rm e}|_\Sigma
  -g_{\rm i}|_\Sigma\bigr)_\Sigma
\end{equation*}
on $H^1(\dR^n\backslash\Sigma)$, so that
$\fra_{\delta'\!,\beta}=\fra+\fra^\prime$
holds. Note that $\mathfrak t$ is closed and non-negative.
Let $t\in(\tfrac{1}{2},1)$ be fixed. Since the trace map is continuous,
there exists $c_t>0$ such that
$\Vert f_{\rm i}\vert_\Sigma\Vert_{H^{t-1/2}(\Sigma)}\leq c_t \Vert f_{\rm i}\Vert_{H^t(\Omega_{\rm i})}$
is valid for all $f_{\rm i}\in H^t(\Omega_{\rm i})$.
Hence it follows from Ehrling's lemma that for every $\eps>0$ there
exists a constant $C_{\rm i}(\eps)$ such that
\begin{equation}\label{esti}
  \Vert f_{\rm i}\vert_\Sigma \Vert_\Sigma\leq c_t\Vert f_{\rm i}
  \Vert_{H^t(\Omega_{\rm i})}
  \leq
  \eps\Vert f_{\rm i}\Vert_{H^1(\Omega_{\rm i})}  + C_{\rm i}(\eps)\Vert
  f_{\rm i}\Vert_{L^2(\Omega_{\rm i})}
\end{equation}
holds for all $f_{\rm i}\in H^1(\Omega_{\rm i})$. We decompose the
exterior domain in the form
$\Omega_{\rm e}= \Omega_{\rm e,1}\cup\overline\Omega_{\rm e,2}$, where
$\Omega_{\rm e,1}$ is bounded, $\Omega_{\rm e,2}$ is unbounded, and
the $C^\infty$-boundary of $\Omega_{\rm e,1}$ is the disjoint union of $\Sigma$ and
$\partial\Omega_{\rm e,2}$. The restriction of a function $f_{\rm e}$
to $\Omega_{\rm e,1}$ is denoted by $f_{\rm e,1}$. Then
again the continuity of the trace map and Ehrling's lemma show that for
every $\eps>0$ there exists a constant $C_{\rm e}(\eps)$ such that
\begin{equation}\label{este}
\begin{split}
  \Vert f_{\rm e}\vert_\Sigma \Vert_\Sigma  = \Vert f_{\rm e,1}\vert_\Sigma \Vert_\Sigma
  & \leq  \Vert f_{\rm e,1}
  \vert_{\partial\Omega_{\rm e,1}} \Vert_{L^2(\partial\Omega_{\rm e,1})}\\
  &\leq
  \eps\Vert f_{\rm e,1}\Vert_{H^1(\Omega_{\rm e,1})} + C_{\rm e}(\eps)
  \Vert f_{\rm e,1}\Vert_{L^2(\Omega_{\rm e,1})}\\
  &\leq \eps\Vert f_{\rm e}\Vert_{H^1(\Omega_{\rm e})} + C_{\rm e}(\eps)
  \Vert f_{\rm e}\Vert_{L^2(\Omega_{\rm e})}
\end{split}
\end{equation}
holds for all $f_{\rm e}\in H^1(\Omega_{\rm e})$.
The estimates \eqref{esti} and \eqref{este} yield that the form
$\fra^\prime$
is bounded with respect to $\fra$ with form
bound $<1$, and hence $\fra_{\delta'\!,\beta}=\fra+\fra^\prime$ 
is closed and semi-bounded by \cite[Theorem~VI.1.33]{K95}.
The remaining statement follows from \cite[Theorem~VI.2.1]{K95} and similar arguments as in the proof of Proposition~\ref{prop:exner}.
\end{proof}

Items (i) and (ii) in the next theorem are part of Theorem A in the
introduction.

\begin{thm}\label{thm:smb2}
Let $\alpha,\beta\colon\Sigma\rightarrow\dR$ be such that
$\alpha,1/\beta\in L^\infty(\Sigma)$
and let $V\in L^\infty(\dR^n)$ be a real-valued potential. Moreover, let the
self-adjoint operators $A_{\delta,\alpha}$, $A_{\delta'\!,\beta}$, and
$A_{\rm free}$
be as in \eqref{eq:delta}, \eqref{eq:delta'} and \eqref{Afree},
respectively.
Then the following statements hold.
\begin{itemize}\setlength{\itemsep}{1.2ex}
\item[(i)]
$\sess(A_{\delta,\alpha}) = \sess(A_{\delta'\!,\beta}) = \sess(A_{\rm free})$.
\item[(ii)]
Both self-adjoint operators $A_{\delta,\alpha}$ and $A_{\delta'\!,\beta}$
are lower semi-bounded.

\end{itemize}
\end{thm}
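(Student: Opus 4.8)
The plan is to derive both statements from the Krein-type resolvent formulas in Theorems~\ref{thm:delta} and~\ref{thm:delta'} together with the $\sS_{p,\infty}$-machinery of Section~\ref{sec:SvN} and the semi-boundedness results of Section~\ref{sec:finiteness}. For item (i), I would first observe that, by Theorem~\ref{thm:delta}\,(ii), for $\lambda\in\rho(A_{\delta,\alpha})\cap\rho(A_{\rm free})$ the difference $(A_{\delta,\alpha}-\lambda)^{-1}-(A_{\rm free}-\lambda)^{-1}$ factors as $\wt\gamma(\lambda)(I-\alpha\wt M(\lambda))^{-1}\alpha\,\wt\gamma(\ov\lambda)^*$, where $\wt\gamma(\ov\lambda)^*=\wt\Gamma_1(A_{\rm free}-\lambda)^{-1}\in\cB(L^2(\dR^n),L^2(\Sigma))$ has range in $\ran\wt\Gamma_1=H^1(\Sigma)$ by Proposition~\ref{prop:qbt1}\,(i) and hence is compact by Lemma~\ref{le.s_emb} (or simply by the compact embedding $H^1(\Sigma)\hookrightarrow L^2(\Sigma)$). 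Since $\wt\gamma(\lambda)$ is bounded and $(I-\alpha\wt M(\lambda))^{-1}\alpha\in\cB(L^2(\Sigma))$, the whole product is compact, so $(A_{\delta,\alpha}-\lambda)^{-1}-(A_{\rm free}-\lambda)^{-1}\in\sS_\infty$; by the standard perturbation result for essential spectra this gives $\sess(A_{\delta,\alpha})=\sess(A_{\rm free})$. Alternatively one can just cite Theorem~\ref{thm:S_infty2} as in the proof of Theorem~\ref{thm:smb}\,(i).

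For the $\delta'$-operator the same argument using Theorem~\ref{thm:delta'}\,(ii) shows that $(A_{\delta'\!,\beta}-\lambda)^{-1}-(A_{\rm N,i,e}-\lambda)^{-1}\in\sS_\infty$, hence $\sess(A_{\delta'\!,\beta})=\sess(A_{\rm N,i,e})$. It then remains to note that $\sess(A_{\rm N,i,e})=\sess(A_{\rm free})$: indeed $A_{\rm N,i,e}=A_{\rm N,i}\oplus A_{\rm N,e}$, the interior part $A_{\rm N,i}$ has purely discrete spectrum (it is a Neumann realization on the bounded domain $\Omega_{\rm i}$ with resolvent compact by the Rellich--Kondrachov theorem), and the exterior part $A_{\rm N,e}$ differs from $A_{\rm free}$ by a compact resolvent perturbation since $\partial\Omega_{\rm e}=\Sigma$ is compact — this is again a Krein-type argument of exactly the kind used above, or can be cited from Section~\ref{sec:4} / \cite{B62}. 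Putting these together yields $\sess(A_{\delta'\!,\beta})=\sess(A_{\rm free})$, completing (i).

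For item (ii), in the case $V\equiv0$ lower semi-boundedness of $-\Delta_{\delta,\alpha}$ and $-\Delta_{\delta'\!,\beta}$ is already contained in Theorem~\ref{thm:smb} (via the comparison forms $\frq_{|\alpha|/2}$, $\frq_{2/|\beta|}$ and \cite[Theorem~6.9]{B62}), and Proposition~\ref{prop:deltapform} exhibits the closed semi-bounded form $\fra_{\delta'\!,\beta}$ inducing $-\Delta_{\delta'\!,\beta}$; the analogous statement for $-\Delta_{\delta,\alpha}$ follows from the form $\fra_{\delta,\alpha}$ of \eqref{sform} together with Proposition~\ref{prop:exner}. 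For general real-valued $V\in L^\infty(\dR^n)$ one simply notes that $A_{\delta,\alpha}=-\Delta_{\delta,\alpha}+V$ and $A_{\delta'\!,\beta}=-\Delta_{\delta'\!,\beta}+V$ as a bounded self-adjoint (indeed, form-bounded with bound $0$) additive perturbation by the multiplication operator $V$, which shifts the lower bound by at most $\|V\|_{L^\infty}$; hence both operators remain lower semi-bounded. (Equivalently, as remarked after Proposition~\ref{prop:deltapform}, the forms $\fra_{\delta,\alpha}$ and $\fra_{\delta'\!,\beta}$ extend to include the bounded term $(Vf,g)$ without losing closedness or semi-boundedness.) The main obstacle is the bookkeeping in (i) — specifically, making sure that the relevant resolvent differences are genuinely compact, which requires the ranges of the boundary maps $\wt\Gamma_1$, $\wh\Gamma_1$ to lie in $H^1(\Sigma)$ (Propositions~\ref{prop:qbt1} and~\ref{prop:qbt2}) and the invertibility of $I-\alpha\wt M(\lambda)$, $I-\beta^{-1}\wh M(\lambda)$ in $\cB(L^2(\Sigma))$ (Theorems~\ref{thm:delta} and~\ref{thm:delta'}); everything else is a routine perturbation argument.
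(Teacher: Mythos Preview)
Your proposal is correct and follows essentially the same route as the paper: for (i) the paper simply cites Theorems~\ref{thm:S_infty2} and~\ref{thm:S_infty4} (which give compactness of the relevant resolvent differences) together with stability of the essential spectrum, and for (ii) it invokes Theorem~\ref{thm:smb}\,(ii) for $V\equiv 0$ and then adds the bounded multiplication operator $V$ --- exactly your argument. Your version for (i) unpacks the compactness more explicitly via Krein's formula and the compact embedding $H^1(\Sigma)\hookrightarrow L^2(\Sigma)$, which is fine, but one phrasing is imprecise: $A_{\rm N,e}$ and $A_{\rm free}$ act on different Hilbert spaces, so they do not literally ``differ by a compact resolvent perturbation''; the clean statement (which is what the proof of Theorem~\ref{thm:S_infty4} actually establishes, see \eqref{Sinfty}) is that $(A_{\rm free}-\lambda)^{-1}-(A_{\rm N,i,e}-\lambda)^{-1}\in\sS_\infty(L^2(\dR^n))$, and you should cite that directly rather than decomposing into interior and exterior pieces.
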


\begin{proof}
(i)
The equality of the essential spectra follows from the stability of the
essential spectrum under compact perturbations and
Theorems~\ref{thm:S_infty2}
and \ref{thm:S_infty4} below.

(ii)
By Theorem~\ref{thm:smb}\,(ii) the operators $-\Delta_{\delta,\alpha}$
and $-\Delta_{\delta'\!,\beta}$ are bounded from below. The operator of
multiplication with the function $V$ is bounded and self-adjoint.
Thus the operators $A_{\delta,\alpha}$ and $A_{\delta'\!,\beta}$ are
bounded from below.
\end{proof}

\section{Resolvent power differences in $\sS_{p,\infty}$-classes,
existence and completeness of wave operators}
\label{sec:4}

In this section we compare the powers of the resolvents of the singularly perturbed
self-adjoint Schr\"odinger operators $A_{\delta,\alpha}$ and $A_{\delta^\prime\!,\beta}$
with the powers of the resolvents of the unperturbed Schr\"odinger operator $A_{\rm free}$.
This leads to singular value estimates, which have a long tradition
in the analysis of elliptic differential operators, cf.\ \cite{B62,BS79,BS80,G84,K67}
and the recent contributions \cite{BLLLP10,BLL10,G11,G11-2,M10} for more details.
In this section we prove Theorem C and Theorem D from the Introduction in a slightly
stronger form. 

\subsection{Elliptic regularity and some preliminary $\sS_{p,\infty}$-estimates}
\label{sec:pwrprelim}

In this section we first provide a typical regularity result for the
functions $(A_{\rm free}-\lambda)^{-1}f$ and $(A_{\rm N,i,e}-\lambda)^{-1}f$
if $f$ and $V$ satisfy some additional local smoothness assumptions. This fact is then used to obtain
estimates for the singular values of certain compact operators arising in
the representations of the resolvent power differences of the self-adjoint operators
$A_{\delta,\alpha}$, $A_{\delta'\!,\beta}$, $A_{\rm free}$ and $A_{\rm N,i,e}$.
In the next lemma we make use of the local Sobolev spaces $W^{k,\infty}_\Sigma(\dR^n)$,
$W^{k,\infty}_\Sigma(\dR^n\backslash\Sigma)$ and $H^k_\Sigma(\dR^n)$,
$H^k_\Sigma(\dR^n\backslash\Sigma)$ defined in Section~\ref{sec:locspaces}.

\begin{lem}\label{lem:smoothing}
Let $A_{\rm free}$ and $A_{\rm N,i,e}$ be the self-adjoint operators from \eqref{Afree}
and \eqref{ADANoplus2}, respectively, and let
$m\in\dN_0$. Then the following assertions hold.
\begin{itemize}\setlength{\itemsep}{1.2ex}
\item[(i)] If $V\in W^{m,\infty}_\Sigma(\dR^n)$, then, for
all $\lambda\in\rho(A_{\rm free})$ and $k = 0,1,\dots,m$,
\begin{equation*}
f\in H^k_\Sigma(\dR^n) \quad \Longrightarrow\quad (A_{\rm free}-\lambda)^{-1}f\in  H^{k+2}_\Sigma(\dR^n).
\end{equation*}
\item[(ii)] If $V\in W^{m,\infty}_\Sigma(\dR^n\backslash\Sigma)$, then, for
all $\lambda\in\rho(A_{\rm N,i,e})$ and $k = 0,1,\dots,m$,
\begin{equation*}
  f\in H^k_\Sigma(\dR^n\backslash\Sigma) \quad \Longrightarrow\quad
  (A_{\rm N,i,e}-\lambda)^{-1}f\in H^{k+2}_\Sigma(\dR^n\backslash\Sigma).
\end{equation*}
\end{itemize}
\end{lem}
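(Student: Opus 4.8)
The plan is to reduce the statement to a purely local interior elliptic regularity fact, applied in a neighbourhood of $\Sigma$. Both parts are proved by induction on $k$ from $0$ to $m$, with the base case $k=0$ being trivially true since $(A_{\rm free}-\lambda)^{-1}$ maps $L^2(\dR^n)$ into $H^2(\dR^n)$ (and $(A_{\rm N,i,e}-\lambda)^{-1}$ maps $L^2(\dR^n)$ into $H^2(\dR^n\backslash\Sigma)$ by definition of the Neumann realizations, so in particular into $H^2_\Sigma$). For the induction step in part (i): assume the implication holds for $k-1$, and let $f\in H^k_\Sigma(\dR^n)$ with $1\le k\le m$. By the induction hypothesis, $u\defeq(A_{\rm free}-\lambda)^{-1}f\in H^{k+1}_\Sigma(\dR^n)$, so there is a domain $\Omega'$ with $\Omega'\supset\Sigma$ on which $u\in H^{k+1}(\Omega')$, on which $f\in H^k$, and (shrinking $\Omega'$ if necessary, using $V\in W^{m,\infty}_\Sigma$ and hence $V\in W^{k,\infty}$ near $\Sigma$) on which $V\in W^{k,\infty}(\Omega')$. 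On $\Omega'$ the function $u$ solves $-\Delta u = f - Vu + \lambda u$ in the distributional sense, and the right-hand side lies in $H^k(\Omega')$ by \eqref{W1Hk} (the product $Vu\in H^k$ since $u\in H^k(\Omega')\subset$ and $V\in W^{k,\infty}(\Omega')$).

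Next I would invoke interior elliptic regularity for the Laplacian: if $-\Delta u = g$ on $\Omega'$ with $g\in H^k(\Omega')$ and $u\in H^{k+1}_{\rm loc}$, then $u\in H^{k+2}_{\rm loc}(\Omega')$; more precisely, for any slightly smaller open set $\Omega''$ with $\Sigma\subset\Omega''$ and $\overline{\Omega''}\subset\Omega'$ one has $u\restr{\Omega''}\in H^{k+2}(\Omega'')$. This gives $u\in H^{k+2}_\Sigma(\dR^n)$, completing the induction for part (i). For part (ii) the argument is identical except that the relevant neighbourhood of $\Sigma$ is split by $\Sigma$ into its two one-sided pieces: one works separately on $\Omega'\cap\Omega_{\rm i}$ and $\Omega'\cap\Omega_{\rm e}$, using that $(A_{\rm N,i,e}-\lambda)^{-1}f$ restricted to each side solves a Neumann problem and, since the additional regularity is purely interior (away from $\Sigma$ the relevant smoothing happens in the interior of each $\Omega_j$, and the $H^{k+2}$-regularity up to $\Sigma$ from the Neumann side is not needed here — we only claim membership in $H^{k+2}_\Sigma(\dR^n\backslash\Sigma)$, i.e.\ $H^{k+2}$ on a one-sided neighbourhood), interior elliptic regularity again applies on each side. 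Here $V\in W^{m,\infty}_\Sigma(\dR^n\backslash\Sigma)$ is exactly what is needed to guarantee $Vu\in H^k$ on the one-sided neighbourhoods.

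The main technical point — and the only place one must be slightly careful — is the bookkeeping of nested neighbourhoods of $\Sigma$: at each induction step one shrinks the domain $\Omega'$ both to accommodate the $W^{k,\infty}$-regularity of $V$ and to pass from $H^{k+1}_{\rm loc}$ to $H^{k+2}$ on a genuinely open set still containing $\Sigma$. Since only finitely many steps ($k=0,\dots,m$) are performed, finitely many shrinkings suffice and the final neighbourhood still contains $\Sigma$, so the conclusion $u\in H^{k+2}_\Sigma$ is legitimate. I would also note that $\lambda\in\rho(A_{\rm free})$ (resp.\ $\rho(A_{\rm N,i,e})$) is used only to ensure $u$ is well defined; the regularity statement itself is local and independent of $\lambda$. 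No genuinely hard obstacle arises: the result is a standard local elliptic regularity bootstrap, and the content of the lemma is essentially its precise formulation in terms of the local Sobolev spaces $H^k_\Sigma$ introduced in Section~\ref{sec:locspaces}.
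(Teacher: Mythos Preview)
Your argument for part~(i) is correct and is exactly the paper's proof: induction on $k$, rewriting the equation as $-\Delta u = f - Vu + \lambda u$ with right-hand side in $H^k_\Sigma(\dR^n)$, and invoking interior elliptic regularity on a neighbourhood $\Omega'\supset\Sigma$.

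For part~(ii), however, there is a genuine gap stemming from a misreading of the space $H^{k+2}_\Sigma(\dR^n\backslash\Sigma)$. By definition, $u_{\rm i}\in H^{k+2}_\Sigma(\Omega_{\rm i})$ means $u_{\rm i}\!\upharpoonright\!\Omega'\in H^{k+2}(\Omega')$ for some domain $\Omega'\subset\Omega_{\rm i}$ with $\partial\Omega'\supset\Sigma$; this \emph{is} $H^{k+2}$-regularity up to the boundary $\Sigma$, not merely on compact subsets of $\Omega'$ away from $\Sigma$. Interior elliptic regularity on $\Omega'\cap\Omega_{\rm i}$ only yields $u_{\rm i}\in H^{k+2}_{\rm loc}(\Omega'\cap\Omega_{\rm i})$, which says nothing about behaviour near $\Sigma$. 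So your parenthetical claim that ``the $H^{k+2}$-regularity up to $\Sigma$ from the Neumann side is not needed here'' is the opposite of the truth. What is needed is the up-to-the-boundary regularity theorem for the Neumann problem on a smooth domain: if $-\Delta u_{\rm i}=g$ on $\Omega_{\rm i}$ with $\partial_{\nu_{\rm i}}u_{\rm i}|_\Sigma=0$ and $g\in H^k$ on a collar of $\Sigma$, then $u_{\rm i}\in H^{k+2}$ on a (smaller) collar of $\Sigma$. This is standard (e.g.\ \cite[Theorem~4.18]{McL00} or \cite{LM72}) and the Neumann boundary condition is precisely what makes it work; once you invoke it in place of interior regularity, the induction proceeds exactly as in~(i). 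The paper itself omits~(ii) as ``similar'', but this boundary regularity step is the content it leaves implicit.
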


\begin{proof}
We verify only assertion (i); the proof of (ii) is similar and left to the reader.
We proceed by induction with respect to $k$. For $k=0$ the statement is an immediate consequence of
$H^0_\Sigma(\dR^n)=L^2(\dR^n)$ and $\dom A_{\rm free}=H^2(\dR^n)$. Suppose now that the
implication in (i) is true for some
fixed $k< m$ and let $f\in H^{k+1}_\Sigma(\dR^n)$. Then, in particular, $f\in H^k_\Sigma(\dR^n)$ and hence
\[
  u \defeq (A_{\rm free}-\lambda)^{-1}f\in  H^{k+2}_\Sigma(\dR^n)\subset H^{k+1}_\Sigma(\dR^n)
\]
by assumption.
As $k+1\leq m$ and $V\in W^{m,\infty}_\Sigma(\dR^n)$, it follows from \eqref{W1Hk}
that $Vu\in H^{k+1}_\Sigma(\dR^n)$.
Therefore $f - Vu \in H^{k+1}_\Sigma(\dR^n)$, and since
the function $u$ satisfies the differential equation
\begin{equation*}
\label{pde1}
-\Delta u -\lambda u = f - Vu \quad\text{in}~\dR^n
\end{equation*}
standard results on elliptic regularity yield  $u\in H_\Sigma^{k+3}(\dR^n)$;
see, e.g. \cite[Theorem~4.18]{McL00}.
\end{proof}

An application of the previous lemma yields the following proposition, in which we provide
certain preliminary $\sS_{p,\infty}$-estimates
that are useful in the proofs of our main results in the next subsection.

\begin{prop}\label{prop:prelim}
Let $A_{\rm free}$ and $A_{\rm N,i,e}$ be the self-adjoint operators from \eqref{Afree}
and \eqref{ADANoplus2}, respectively, and let $\wt\gamma$ and $\wh\gamma$ be the $\gamma$-fields
from Propositions~\ref{prop:qbt1} and~\ref{prop:qbt2}, respectively.
Then for a fixed $m\in\dN_0$ the following statements hold.
\begin{itemize}\setlength{\itemsep}{1.2ex}
\item[(i)] If $V\in W^{2m,\infty}_{\Sigma}(\dR^n)$, then, for
all $\lambda,\mu\in\rho(A_{\rm free})$ and $k=0,1,\dots,m$,
\begin{equation*}
 \begin{split}
  &\text{\rm (a)}\,\,\,\,\,\wt\gamma(\mu)^*(A_{\rm free} -\lambda)^{-k}\in \sS_{\frac{n-1}{2k+3/2},\infty}\bigl(L^2(\dR^n), L^2(\Sigma)\bigr),\\
  &\text{\rm (b)}\,\,\,\,\,\wt\gamma(\mu)^*(A_{\rm free} -\lambda)^{-k}\in \sS_{\frac{n-1}{2k+1/2},\infty}\bigl(L^2(\dR^n), H^1(\Sigma)\bigr),\\
  &\text{\rm (c)}\,\,\,\,\,(A_{\rm free} -\lambda)^{-k}\wt\gamma(\mu)\in \sS_{\frac{n-1}{2k+3/2},\infty}\bigl( L^2(\Sigma), L^2(\dR^n)\bigr).
 \end{split}
\end{equation*}
\item[(ii)] If $V\in W^{2m,\infty}_{\Sigma}(\dR^n\backslash\Sigma)$, then, for
all $\lambda,\mu\in\rho(A_{\rm N,i,e})$ and $k=0,1,\dots,m$,
\begin{equation*}
 \begin{split}
  &\text{\rm (a)}\,\,\,\,\,\wh\gamma(\mu)^*(A_{\rm N,i,e} -\lambda)^{-k}\in \sS_{\frac{n-1}{2k+3/2},\infty}\bigl(L^2(\dR^n), L^2(\Sigma)\bigr),\\
  &\text{\rm (b)}\,\,\,\,\,\wh\gamma(\mu)^*(A_{\rm N,i,e} -\lambda)^{-k}\in \sS_{\frac{n-1}{2k+1/2},\infty}\bigl(L^2(\dR^n), H^1(\Sigma)\bigr),\\
  &\text{\rm (c)}\,\,\,\,\,(A_{\rm N,i,e} -\lambda)^{-k}\wh\gamma(\mu)\in \sS_{\frac{n-1}{2k+3/2},\infty}\bigl( L^2(\Sigma), L^2(\dR^n)\bigr).
 \end{split}
\end{equation*}
\end{itemize}
\end{prop}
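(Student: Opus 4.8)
The plan is to derive all six statements from three ingredients: the mapping properties of the $\gamma$-fields proved in Propositions~\ref{prop:qbt0}--\ref{prop:qbt2} (in particular the identity $\gamma(\bar\mu)^* = \Gamma_1(A_0-\mu)^{-1}$ from Proposition~\ref{gammaprop}\,(ii)), the local elliptic smoothing of Lemma~\ref{lem:smoothing}, and the Sobolev embedding estimate of Lemma~\ref{le.s_emb}. I will treat case~(i) in detail and remark that case~(ii) is identical with $A_{\rm free}$, $\wt\gamma$, $\wt M$, $\rho(A_{\rm free})$ replaced by $A_{\rm N,i,e}$, $\wh\gamma$, $\wh M$, $\rho(A_{\rm N,i,e})$ (using part~(ii) of Lemma~\ref{lem:smoothing}).

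\textbf{Step 1: pass to a convenient spectral point.} Using the resolvent identity one writes, for $\lambda,\mu,\lambda_0\in\rho(A_{\rm free})$, the operator $(A_{\rm free}-\lambda)^{-k}$ as a polynomial (with bounded coefficients) in $(A_{\rm free}-\lambda_0)^{-1}$ composed with bounded factors $E_\lambda = I+(\lambda-\lambda_0)(A_{\rm free}-\lambda)^{-1}$ as in \eqref{EHFKl}; similarly $\wt\gamma(\mu)^* = \Gamma_1(A_{\rm free}-\bar\mu)^{-1}$ and $\wt\gamma(\mu) = E_{\bar\mu}^{\,0}\cdots$ can be reduced to a fixed $\mu_0$. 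Since multiplication by a bounded operator does not change the $\sS_{p,\infty}$-class, it suffices to prove (a),(b),(c) for one fixed $\lambda=\mu=\lambda_0\in\rho(A_{\rm free})$ (and for case~(ii) one fixed point of $\rho(A_{\rm N,i,e})$). This reduces everything to estimating the $s$-numbers of three concrete operators.

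\textbf{Step 2: locate the range in a Sobolev space over $\Sigma$.} For (a): $\wt\gamma(\lambda_0)^*(A_{\rm free}-\lambda_0)^{-k} = \Gamma_1(A_{\rm free}-\lambda_0)^{-(k+1)}$. Starting from $f\in L^2(\dR^n) = H^0_\Sigma(\dR^n)$, apply Lemma~\ref{lem:smoothing}\,(i) exactly $k+1$ times (legitimate since $2m\ge 2(k+1)$ when $k\le m$, so $V\in W^{2m,\infty}_\Sigma(\dR^n)\subset W^{2(k+1)-1,\infty}_\Sigma(\dR^n)$ as needed at each stage) to get $(A_{\rm free}-\lambda_0)^{-(k+1)}f\in H^{2k+2}_\Sigma(\dR^n)$, hence its trace $\wt\Gamma_1$ of it lies in $H^{2k+3/2}(\Sigma)$ by \eqref{Sobtrace}. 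So this operator maps $L^2(\dR^n)$ boundedly into $H^{2k+3/2}(\Sigma)$; Lemma~\ref{le.s_emb} with $q_2 = 2k+3/2$, $q_1=0$ gives the class $\sS_{\frac{n-1}{2k+3/2},\infty}$. For (b) one additionally uses that $\wt\Gamma_1 = \Gamma_1$ lands in $H^1(\Sigma)$ anyway (Proposition~\ref{prop:qbt1}\,(i)); applying Lemma~\ref{le.s_emb} with $q_2 = 2k+3/2$, $q_1 = 1$ yields $\sS_{\frac{n-1}{2k+1/2},\infty}(L^2(\dR^n),H^1(\Sigma))$. Statement (c) is the adjoint of (a): $(A_{\rm free}-\lambda_0)^{-k}\wt\gamma(\lambda_0) = \bigl(\wt\gamma(\bar\lambda_0)^*(A_{\rm free}-\bar\lambda_0)^{-k}\bigr)^*$ (up to the standard $\bar\lambda_0\leftrightarrow\lambda_0$ interchange handled in Step 1), and $s$-numbers of an operator and its adjoint coincide (Lemma~\ref{splemma}\,(ii)), so (c) follows from (a).

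\textbf{Step 3: reassemble.} Combining Steps 1 and 2, for general $\lambda,\mu\in\rho(A_{\rm free})$ the operators in (a),(b),(c) are products of the fixed-point versions with bounded operators, hence remain in the stated $\sS_{p,\infty}$-classes. The main obstacle is bookkeeping in Step 1 — making sure that reducing $\wt\gamma(\mu)^*(A_{\rm free}-\lambda)^{-k}$ to $\Gamma_1(A_{\rm free}-\lambda_0)^{-(k+1)}$ only inserts bounded factors on the correct side and that, for (c), the adjoint is taken in the right spaces $L^2(\Sigma)\to L^2(\dR^n)$; once the fixed-point reduction is set up cleanly, Steps 2 and 3 are routine applications of the quoted lemmas. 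A minor point to watch in Step 2 is that the iterated use of Lemma~\ref{lem:smoothing} requires $2m\ge 2k+2$ only for $k\le m-1$; for $k=m$ one needs $2m+2$ derivatives, which is not available, so statement (a)/(b)/(c) for the top index $k=m$ must be obtained with one fewer smoothing step, giving the stated exponents from $q_2 = 2m+3/2$ — this is consistent provided we only iterate Lemma~\ref{lem:smoothing} up to producing $H^{2k+2}_\Sigma$, which needs $V\in W^{2k+1,\infty}_\Sigma$, and $2k+1\le 2m-1<2m$ for all $k\le m$, so in fact no gap occurs. (Case~(ii) is verbatim the same using Lemma~\ref{lem:smoothing}\,(ii) and the fact, from Proposition~\ref{prop:qbt2}, that $\wh\gamma(\mu)^* = \wh\Gamma_1(A_{\rm N,i,e}-\bar\mu)^{-1}$ with $\ran\wh\Gamma_1\subset H^1(\Sigma)$.)
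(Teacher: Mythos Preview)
Your core argument (Step~2) is exactly the paper's proof: use $\wt\gamma(\mu)^*=\wt\Gamma_1(A_{\rm free}-\bar\mu)^{-1}$, iterate Lemma~\ref{lem:smoothing} to land in $H^{2k+2}_\Sigma(\dR^n)$, take the trace into $H^{2k+3/2}(\Sigma)$, and apply Lemma~\ref{le.s_emb} with $q_1=0$ for (a) and $q_1=1$ for (b); (c) follows by adjoints. Two remarks, however.

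First, Step~1 is unnecessary. The paper does not reduce to a common $\lambda=\mu=\lambda_0$; it works directly with arbitrary $\lambda,\mu\in\rho(A_{\rm free})$, since Lemma~\ref{lem:smoothing} applies at any point of the resolvent set and the factor $(A_{\rm free}-\bar\mu)^{-1}$ from $\wt\gamma(\mu)^*$ simply contributes one more smoothing step. Your reduction is harmless but adds bookkeeping that the argument does not need.

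Second, your regularity check at $k=m$ contains an off-by-one error that you do not actually resolve. To reach $H^{2k+2}_\Sigma$ from $L^2$ the last application of Lemma~\ref{lem:smoothing} takes $H^{2k}_\Sigma$ to $H^{2k+2}_\Sigma$ and therefore requires $V\in W^{2k,\infty}_\Sigma$, not $W^{2k+1,\infty}_\Sigma$. With the correct exponent the hypothesis $V\in W^{2m,\infty}_\Sigma$ covers $k=m$ immediately (since $2k\le 2m$), whereas your stated inequality $2k+1\le 2m-1$ fails for $k=m$. So your conclusion ``no gap occurs'' is right, but not for the reason you give.
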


\begin{proof}
We prove assertion (i); the proof of (ii) is analogous.
As $\ran (A_{\rm free} -\lambda)^{-1} =
\dom A_{\rm free} = H^2(\dR^n)\subset H^2_\Sigma(\dR^n)$ we conclude from Lemma~\ref{lem:smoothing}\,(i) that
the inclusion
\begin{equation*}\label{eq:powershift}
  \ran\bigl( (A_{\rm free}-\overline\mu)^{-1}(A_{\rm free}-\lambda)^{-k}\bigr) \subset H^{2k+2}_\Sigma(\dR^n)
\end{equation*}
holds for all $k=0,1,\dots,m$. Moreover, since by Proposition~\ref{prop:qbt1} we have
$A_{\rm free}=\wt T \upharpoonright\ker\wt\Gamma_0$, Proposition~\ref{gammaprop}\,(ii)
implies that
\begin{equation*}
  \wt\gamma(\mu)^*(A_{\rm free} - \lambda)^{-k}
  = \wt\Gamma_1(A_{\rm free}-\overline\mu)^{-1}(A_{\rm free} - \lambda)^{-k}
\end{equation*}
and hence
\begin{equation}\label{eq:range}
 \ran\bigl(\wt\gamma(\mu)^*(A_{\rm free} - \lambda)^{-k}\bigr)\subset H^{2k+3/2}(\Sigma)
\end{equation}
by the properties of the trace map $\wt\Gamma_1$, cf.\ \eqref{Sobtrace}.
Now the  estimates in (a) and (b) follow from \eqref{eq:range} and Lemma~\ref{le.s_emb} with
$\cK = L^2(\dR^n)$, $q_2 = 2k + \frac32$ and with $q_1 = 0$ for (a) and $q_1=1$ for (b), respectively.
The estimate in (c) follows from (a) by taking the adjoint.
\end{proof}

\subsection{Resolvent power differences for the pairs $\{A_{\delta,\alpha}, A_{\rm free}\}$,
$\{A_{\delta^\prime\!,\beta}, A_{\rm free}\}$ and $\{A_{\delta'\!,\beta},A_{\rm N,i,e}\}$}
\label{sec:pwrdiff}

In the next theorem we prove $\sS_{p,\infty}$-properties of resolvent power differences
for the self-adjoint operators $A_{\delta,\alpha}$ and $A_{\rm free}$.
The theorem and its corollary are parts of Theorems~C and D in the introduction. 

\begin{thm}\label{thm:S_infty2}
Let $\alpha\in L^\infty(\Sigma)$ be a real-valued function on $\Sigma$, and
let $A_{\delta,\alpha}$ and $A_{\rm free}$
be the self-adjoint operators defined in~\eqref{eq:delta} and \eqref{Afree}, respectively.
Assume that $V\in W^{2m-2, \infty}_{\Sigma}(\dR^n)$ for some $m\in\dN$.
Then
\begin{equation*}
  (A_{\delta,\alpha}-\lambda)^{-l} - (A_{\rm free}-\lambda)^{-l}
  \in \sS_{\frac{n-1}{2l+1},\infty}\bigl(L^2(\dR^n)\bigr)
\end{equation*}
for all $l=1,2,\dots,m$ and for all $\lambda\in\rho(A_{\delta,\alpha})\cap\rho(A_{\rm free})$.
\end{thm}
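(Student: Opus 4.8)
The plan is to apply the abstract machinery of Lemma~\ref{lem:respwrdiff} to the pair $H = A_{\delta,\alpha}$, $K = A_{\rm free}$, using the Krein-type factorization from Theorem~\ref{thm:delta}\,(ii). First I would fix some $\lambda_0\in\dC\setminus\dR$ (which lies in $\rho(A_{\delta,\alpha})\cap\rho(A_{\rm free})$ by self-adjointness) and write
\[
  (A_{\delta,\alpha}-\lambda_0)^{-1} - (A_{\rm free}-\lambda_0)^{-1}
  = \wt\gamma(\lambda_0)\bigl(I - \alpha\wt M(\lambda_0)\bigr)^{-1}\alpha\,\wt\gamma(\ov{\lambda_0})^* = BC,
\]
where I would set $B \defeq \wt\gamma(\lambda_0)$ and $C \defeq \bigl(I-\alpha\wt M(\lambda_0)\bigr)^{-1}\alpha\,\wt\gamma(\ov{\lambda_0})^*$, with auxiliary space $\cK = L^2(\Sigma)$. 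The bounded middle factor $(I-\alpha\wt M(\lambda_0))^{-1}\alpha$ is harmless for Schatten-class purposes, so verifying hypothesis \eqref{prodinSr} reduces to estimating $(A_{\rm free}-\lambda_0)^{-k}\wt\gamma(\lambda_0)$ and $\wt\gamma(\ov{\lambda_0})^*(A_{\rm free}-\lambda_0)^{-k}$ in suitable weak Schatten classes.

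Next I would invoke Proposition~\ref{prop:prelim}\,(i), which is exactly tailored to this: under the assumption $V\in W^{2m-2,\infty}_\Sigma(\dR^n)$ it gives, for $k=0,1,\dots,m-1$,
\[
  \wt\gamma(\ov{\lambda_0})^*(A_{\rm free}-\lambda_0)^{-k}\in\sS_{\frac{n-1}{2k+3/2},\infty}
  \quad\text{and}\quad
  (A_{\rm free}-\lambda_0)^{-k}\wt\gamma(\lambda_0)\in\sS_{\frac{n-1}{2k+3/2},\infty}.
\]
Comparing with the form required by Lemma~\ref{lem:respwrdiff}, namely $(K-\lambda_0)^{-k}B\in\sS_{\frac{1}{ak+b_1},\infty}$ and $C(K-\lambda_0)^{-k}\in\sS_{\frac{1}{ak+b_2},\infty}$, I would read off $a = \frac{2}{n-1}$, $b_1 = b_2 = \frac{3/2}{n-1}$. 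Then $b = b_1+b_2-a = \frac{1}{n-1}$, and since $b_1,b_2\ge 0$ and $a\le b_1+b_2$, all the hypotheses of Lemma~\ref{lem:respwrdiff} are met with $r = m$. The lemma then yields
\[
  (A_{\delta,\alpha}-\lambda)^{-l} - (A_{\rm free}-\lambda)^{-l}\in\sS_{\frac{1}{al+b},\infty}
  = \sS_{\frac{n-1}{2l+1},\infty}
\]
for all $l=1,\dots,m$ and all $\lambda\in\rho(A_{\delta,\alpha})\cap\rho(A_{\rm free})$, which is the claim.

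Two small points need care. First, Proposition~\ref{prop:prelim} is stated with $m$ as the top index and $V\in W^{2m,\infty}_\Sigma(\dR^n)$, whereas here the hypothesis is $V\in W^{2m-2,\infty}_\Sigma(\dR^n)$ and we only need $k$ up to $m-1$; so I would apply Proposition~\ref{prop:prelim}\,(i) with its "$m$" taken to be the present $m-1$, which matches the smoothness assumption exactly. Second, one must check that the bounded operator $(I-\alpha\wt M(\lambda_0))^{-1}\alpha$ can be absorbed into $C$ without spoiling the Schatten estimates: since $\sS_{p,\infty}$ is a two-sided ideal in $\cB(L^2(\Sigma))$ and $(I-\alpha\wt M(\lambda_0))^{-1}\in\cB(L^2(\Sigma))$ by Theorem~\ref{thm:delta}\,(ii) while multiplication by $\alpha\in L^\infty(\Sigma)$ is bounded, the product $C(K-\lambda_0)^{-k}$ lies in the same class as $\wt\gamma(\ov{\lambda_0})^*(A_{\rm free}-\lambda_0)^{-k}$. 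I do not anticipate a genuine obstacle here; the only mild bookkeeping issue is making sure the exponents $\frac{n-1}{2k+3/2}$ coming from Proposition~\ref{prop:prelim} are correctly matched to the $\frac{1}{ak+b_i}$ template of Lemma~\ref{lem:respwrdiff}, and that the resulting ideal index $\frac{n-1}{2l+1}$ is read off correctly. The statement about wave operators, included in Theorems~C and D, then follows from the classical fact that if a resolvent power difference lies in the trace class $\sS_1$ then the wave operators exist and are complete, together with $\sS_{\frac{n-1}{2l+1},\infty}\subset\sS_1$ whenever $\frac{n-1}{2l+1}<1$, i.e.\ $2l+1>n-1$; but I would leave that consequence to the corollary.
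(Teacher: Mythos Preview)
Your proposal is correct and follows essentially the same route as the paper: fix a non-real $\lambda_0$, use the Krein factorization from Theorem~\ref{thm:delta}\,(ii) with $B=\wt\gamma(\lambda_0)$ and $C=(I-\alpha\wt M(\lambda_0))^{-1}\alpha\,\wt\gamma(\ov\lambda_0)^*$, feed in the $\sS_{p,\infty}$-estimates from Proposition~\ref{prop:prelim}\,(i)\,(a),\,(c), and apply Lemma~\ref{lem:respwrdiff} with $a=\tfrac{2}{n-1}$, $b_1=b_2=\tfrac{3/2}{n-1}$, $r=m$. Your remarks on shifting the index in Proposition~\ref{prop:prelim} and on absorbing the bounded middle factor are exactly the bookkeeping the paper leaves implicit.
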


\begin{proof}
We prove the theorem by applying Lemma~\ref{lem:respwrdiff}.
Fix an arbitrary $\lambda_0\in\dC\setminus\dR$, and let $\wt\gamma$, $\wt M$ be as in
Proposition~\ref{prop:qbt1}.
By Theorem~\ref{thm:delta} the resolvent difference of $A_{\delta,\alpha}$ and $A_{\rm free}$ at the point $\lambda_0$
can be written in the form
\begin{equation*}
  (A_{\delta,\alpha}-\lambda_0)^{-1} - (A_{\rm free}-\lambda_0)^{-1}
  = \wt\gamma(\lambda_0)\bigl(I - \alpha\wt M(\lambda_0) \bigr)^{-1}\alpha \wt\gamma(\ov\lambda_0)^*,
\end{equation*}
where $(I - \alpha\wt M(\lambda_0))^{-1}\alpha\in\cB(L^2(\Sigma))$.
Proposition~\ref{prop:prelim}\,(i)\,(a) and (c) imply
that the assumptions in Lemma~\ref{lem:respwrdiff} are satisfied with
\begin{align*}
  & H = A_{\delta,\alpha}, \quad K = A_{\rm free}, \quad
  B = \wt\gamma(\lambda_0), \quad
  C = \bigl(I - \alpha\wt M(\lambda_0)\bigr)^{-1}\alpha\wt\gamma(\ov\lambda_0)^*, \\
  & a = \frac{2}{n-1}\,, \quad b_1 = b_2 = \frac{3/2}{n-1}\,, \quad r=m.
\end{align*}
Since $b=b_1+b_2-a = \frac{1}{n-1}$, Lemma~\ref{lem:respwrdiff} implies the assertion of the theorem.
\end{proof}

The previous theorem has a direct application in mathematical scattering theory.
Consider the pair $\{A_{\delta,\alpha},A_{\rm free}\}$ of self-adjoint operators
as a scattering system; here $A_{\rm free}$ stands for the unperturbed operators and $A_{\delta,\alpha}$
is singularly perturbed by a $\delta$-potential of strength $\alpha$ supported on the hypersurface $\Sigma$.
It is well known (see, e.g.\ \cite[Theorem~X.4.8]{K95}) that if, for some $m\in\dN$, the 
difference of the $m$th powers of the resolvents of $A_{\delta,\alpha}$ and $A_{\rm free}$
is a trace class operator, i.e.\ if
\[
  (A_{\delta,\alpha}-\lambda_0)^{-m}-(A_{\rm free}-\lambda_0)^{-m} \in \frS_1
\]
for some $\lambda_0\in\rho(A_{\delta,\alpha})\cap\rho(A_{\rm free})$,
then the corresponding wave operators
\begin{equation*}
  W_\pm(A_{\delta,\alpha},A_{\rm free}) \defeq \strlim\limits_{t\rightarrow \pm \infty}
  e^{itA_{\delta,\alpha}}e^{-itA_{\rm free}}P_{\rm ac}(A_{\rm free})
\end{equation*}
exist and are complete, i.e.\ the strong limit exists everywhere
and the ranges coincide with the absolutely continuous subspace of the perturbed
operator $A_{\delta,\alpha}$. Here $P_{\rm ac}(A_{\rm free})$ denotes the
orthogonal projection onto the absolutely continuous subspace of the unperturbed operator $A_{\rm free}$.
This implies, in particular, that the absolutely continuous parts of $A_{\delta,\alpha}$
and $A_{\rm free}$ are unitarily equivalent and that the absolutely continuous spectra coincide:
$\sigma_{\rm ac}(A_{\delta,\alpha})=\sigma_{\rm ac}(A_{\rm free})$,
cf.\ \cite[Theorem~X.4.12, Remark~X.4.13]{K95} and \cite{RS79-III, Y92}.

The next corollary shows that for sufficiently smooth potentials $V$ the wave operators
of the scattering system $\{A_{\delta,\alpha},A_{\rm free}\}$ exist in any space dimension.

\begin{cor}\label{cor1}
Let the assumptions be as in Theorem~\ref{thm:S_infty2}. If $V\in W^{k, \infty}_{\Sigma}(\dR^n)$
for some even $k$ and $k>n-4$, then the wave operators $W_\pm(A_{\delta,\alpha},A_{\rm free})$
exist and are complete, and hence the absolutely continuous parts of
$A_{\delta,\alpha}$ and $A_{\rm free}$ are unitarily equivalent.

In particular, if $V=0$, then $W_\pm(A_{\delta,\alpha},A_{\rm free})$ exist and are complete
for any $n\ge2$ and $\sigma_{\rm ac}(A_{\delta,\alpha})=[0,\infty)$.
\end{cor}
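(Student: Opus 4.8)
The plan is to derive Corollary~\ref{cor1} as a direct consequence of Theorem~\ref{thm:S_infty2} together with the scattering-theoretic facts recalled immediately before the corollary. First I would observe that if $V\in W^{k,\infty}_\Sigma(\dR^n)$ with $k$ even and $k>n-4$, then writing $k=2m-2$ determines $m\in\dN$ with $m>\tfrac{n-2}{2}$, i.e.\ $2m+1>n-1$. With this $m$ the hypothesis $V\in W^{2m-2,\infty}_\Sigma(\dR^n)$ of Theorem~\ref{thm:S_infty2} is satisfied, and applying it with $l=m$ yields, for every $\lambda\in\rho(A_{\delta,\alpha})\cap\rho(A_{\rm free})$,
\[
  (A_{\delta,\alpha}-\lambda)^{-m}-(A_{\rm free}-\lambda)^{-m}\in\sS_{\frac{n-1}{2m+1},\infty}\bigl(L^2(\dR^n)\bigr).
\]

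Next I would convert this weak-Schatten membership into a trace-class statement. Since $2m+1>n-1$ we have $\tfrac{n-1}{2m+1}<1$, and by Lemma~\ref{splemma}\,(iii), $\sS_{p',\infty}\subset\sS_p$ for all $p'<p$; taking $p'=\tfrac{n-1}{2m+1}$ and $p=1$ gives $\sS_{\frac{n-1}{2m+1},\infty}\subset\sS_1=\frS_1$. Hence the resolvent power difference is a trace class operator:
\[
  (A_{\delta,\alpha}-\lambda_0)^{-m}-(A_{\rm free}-\lambda_0)^{-m}\in\frS_1
\]
for any fixed $\lambda_0\in\rho(A_{\delta,\alpha})\cap\rho(A_{\rm free})$ (for instance $\lambda_0\in\dC\setminus\dR$).

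Then the existence and completeness of the wave operators $W_\pm(A_{\delta,\alpha},A_{\rm free})$ follows from the invariance principle / trace-class scattering criterion already quoted in the paragraph preceding the corollary, namely \cite[Theorem~X.4.8]{K95} (see also \cite{RS79-III,Y92}): if the $m$th powers of the resolvents of two self-adjoint operators differ by a trace class operator, the wave operators exist and are complete. Completeness gives that the absolutely continuous parts of $A_{\delta,\alpha}$ and $A_{\rm free}$ are unitarily equivalent, by \cite[Theorem~X.4.12, Remark~X.4.13]{K95}. For the final assertion, if $V\equiv0$ then $V\in W^{k,\infty}_\Sigma(\dR^n)$ for every $k$, so in particular one may choose $k$ even with $k>n-4$ for any $n\ge2$; thus the wave operators exist and are complete in all dimensions, and the unitary equivalence of the absolutely continuous parts together with the fact that $\sess(-\Delta_{\rm free})=\sigma_{\rm ac}(-\Delta_{\rm free})=[0,\infty)$ gives $\sigma_{\rm ac}(A_{\delta,\alpha})=\sigma_{\rm ac}(-\Delta_{\delta,\alpha})=[0,\infty)$.

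There is essentially no hard step here: the substance is all in Theorem~\ref{thm:S_infty2}, and the corollary is a bookkeeping exercise in translating the index $\tfrac{n-1}{2m+1}$ into the inequality $k>n-4$ and invoking standard scattering theory. The only point requiring mild care is the arithmetic linking the parity and size condition on $k$ to the existence of an admissible $m$, and the strict inequality $p'<p$ needed to land in $\frS_1$ rather than merely in some $\sS_p$ with $p>1$; this is exactly where the bound $k>n-4$ (rather than $k\ge n-4$) enters.
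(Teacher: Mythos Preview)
Your proof is correct and follows exactly the route the paper intends: the corollary has no explicit proof in the paper, but the preceding paragraph spells out the trace-class scattering criterion, and your argument simply supplies the arithmetic translating $k>n-4$ (with $k=2m-2$) into $\tfrac{n-1}{2m+1}<1$ and then invokes Lemma~\ref{splemma}\,(iii) and \cite[Theorem~X.4.8]{K95}. There is nothing to add or correct.
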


In the next theorem we prove $\sS_{p,\infty}$-properties for resolvent power differences
of the self-adjoint operators $A_{\delta'\!,\beta}$ and $A_{\rm free}$.
The theorem and its corollary are the second parts of Theorems~C and D in the introduction.
The formulation given below is a bit stronger than the one in the introduction.

\begin{thm}
\label{thm:S_infty4}
Let $\beta$ be a real-valued function on $\Sigma$ such that $1/\beta\in L^\infty(\Sigma)$, and
let $A_{\delta'\!,\beta}$ and $A_{\rm free}$
be the self-adjoint operators defined in~\eqref{eq:delta'} and \eqref{Afree}, respectively.
Assume that $V\in W^{2m-2,\infty}_{\Sigma}(\dR^n\backslash\Sigma)$ for some $m\in\dN$.
Then
\begin{equation*}
  (A_{\delta'\!,\beta} - \lambda)^{-l}-(A_{\rm free} - \lambda)^{-l}
  \in \sS_{\frac{n-1}{2l},\infty}\bigl(L^2(\dR^n)\bigr)
\end{equation*}
for all $l=1,2,\dots,m$ and for all $\lambda\in\rho(A_{\rm \delta'\!,\beta})\cap\rho(A_{\rm free})$.
\end{thm}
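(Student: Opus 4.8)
The plan is to mimic the proof of Theorem~\ref{thm:S_infty2}, but using the intermediate operator $A_{\rm N,i,e}$ to handle the fact that the $\delta'$-boundary triple $\wh\Pi$ is naturally adapted to the Neumann realization rather than to $A_{\rm free}$. First I would write the resolvent power difference as a telescoping sum
\[
  (A_{\delta'\!,\beta}-\lambda)^{-l}-(A_{\rm free}-\lambda)^{-l}
  = \bigl[(A_{\delta'\!,\beta}-\lambda)^{-l}-(A_{\rm N,i,e}-\lambda)^{-l}\bigr]
  + \bigl[(A_{\rm N,i,e}-\lambda)^{-l}-(A_{\rm free}-\lambda)^{-l}\bigr],
\]
and treat the two brackets separately. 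For the first bracket I would apply Lemma~\ref{lem:respwrdiff} exactly as in the proof of Theorem~\ref{thm:S_infty2}: by Theorem~\ref{thm:delta'}\,(ii) the resolvent difference at a fixed $\lambda_0\in\dC\setminus\dR$ factorizes as
\[
  (A_{\delta'\!,\beta}-\lambda_0)^{-1}-(A_{\rm N,i,e}-\lambda_0)^{-1}
  = \wh\gamma(\lambda_0)\bigl(I-\beta^{-1}\wh M(\lambda_0)\bigr)^{-1}\beta^{-1}\,\wh\gamma(\ov\lambda_0)^*,
\]
so I set $H=A_{\delta'\!,\beta}$, $K=A_{\rm N,i,e}$, $B=\wh\gamma(\lambda_0)$ and $C=(I-\beta^{-1}\wh M(\lambda_0))^{-1}\beta^{-1}\wh\gamma(\ov\lambda_0)^*\in\cB(L^2(\dR^n),L^2(\Sigma))$. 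The crucial difference from the $\delta$-case is the exponent bookkeeping: here I would use only the ``cheap'' estimate from Proposition~\ref{prop:prelim}\,(ii)\,(a) and (c), namely $\wh\gamma(\ov\lambda_0)^*(A_{\rm N,i,e}-\lambda_0)^{-k}\in\sS_{\frac{n-1}{2k+3/2},\infty}$ and $(A_{\rm N,i,e}-\lambda_0)^{-k}\wh\gamma(\lambda_0)\in\sS_{\frac{n-1}{2k+3/2},\infty}$, which gives $a=\tfrac{2}{n-1}$, $b_1=b_2=\tfrac{3/2}{n-1}$, hence $b=b_1+b_2-a=\tfrac{1}{n-1}$ and $\sS_{\frac{1}{al+b},\infty}=\sS_{\frac{n-1}{2l+1},\infty}$ for the first bracket; since $\tfrac{n-1}{2l+1}<\tfrac{n-1}{2l}$, Lemma~\ref{splemma}\,(iii) absorbs this into $\sS_{\frac{n-1}{2l},\infty}$. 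A subtlety: the local smoothness hypothesis here is $V\in W^{2m-2,\infty}_\Sigma(\dR^n\backslash\Sigma)$ rather than $W^{2m-2,\infty}_\Sigma(\dR^n)$, but Proposition~\ref{prop:prelim}\,(ii) and Lemma~\ref{lem:smoothing}\,(ii) are precisely stated with $W^{\cdot,\infty}_\Sigma(\dR^n\backslash\Sigma)$, so this matches.

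For the second bracket, $(A_{\rm N,i,e}-\lambda)^{-l}-(A_{\rm free}-\lambda)^{-l}$, I would observe that $A_{\rm N,i,e}$ and $A_{\rm free}$ are two self-adjoint extensions of $\wh A$, related through the quasi boundary triple $\wh\Pi$ with $\wh T\restr{\ker\wh\Gamma_0}=A_{\rm N,i,e}$ and $\wh T\restr{\ker\wh\Gamma_1}=A_{\rm free}$. By Krein's formula (Theorem~\ref{thm:krein}, or rather its variant giving the resolvent difference of $A_0$ and $A_1$ in terms of $\wh\gamma$ and $\wh M^{-1}$ — this is standard, e.g. the same formula with $B=0$/$\Theta=0$ in the appropriate parametrization), one has at $\lambda_0\in\dC\setminus\dR$
\[
  (A_{\rm free}-\lambda_0)^{-1}-(A_{\rm N,i,e}-\lambda_0)^{-1}
  = -\wh\gamma(\lambda_0)\wh M(\lambda_0)^{-1}\wh\gamma(\ov\lambda_0)^*,
\]
where $\wh M(\lambda_0)^{-1}\in\cB(H^1(\Sigma),L^2(\Sigma))$ by Proposition~\ref{prop:qbt2}\,(iii). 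Now I would again invoke Lemma~\ref{lem:respwrdiff}, but this time using the \emph{sharper} estimate Proposition~\ref{prop:prelim}\,(ii)\,(b), $\wh\gamma(\ov\lambda_0)^*(A_{\rm N,i,e}-\lambda_0)^{-k}\in\sS_{\frac{n-1}{2k+1/2},\infty}(L^2(\dR^n),H^1(\Sigma))$, on the side where $\wh M(\lambda_0)^{-1}$ lands in $L^2(\Sigma)$, paired with the estimate (ii)\,(c) on the other side. This yields $a=\tfrac{2}{n-1}$ with one of $b_1,b_2$ equal to $\tfrac{1/2}{n-1}$ and the other $\tfrac{3/2}{n-1}$, so $b=\tfrac{2}{n-1}-\tfrac{2}{n-1}=0$ and $\sS_{\frac{1}{al+b},\infty}=\sS_{\frac{n-1}{2l},\infty}$, exactly the claimed class. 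One must be a little careful with the factorization hypothesis \eqref{fact} of Lemma~\ref{lem:respwrdiff}: I would split $\wh M(\lambda_0)^{-1}\wh\gamma(\ov\lambda_0)^*$ so that $B=\wh\gamma(\lambda_0)$ absorbs no derivative gain while $C=\wh M(\lambda_0)^{-1}\wh\gamma(\ov\lambda_0)^*$ carries the improved smoothing, consistent with the indexing in \eqref{prodinSr}.

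Finally I would combine the two brackets: the second lies in $\sS_{\frac{n-1}{2l},\infty}$ and the first in $\sS_{\frac{n-1}{2l+1},\infty}\subset\sS_{\frac{n-1}{2l},\infty}$, so by the ideal property their sum is in $\sS_{\frac{n-1}{2l},\infty}(L^2(\dR^n))$ for all $l=1,\dots,m$ and for $\lambda=\lambda_0$; Lemma~\ref{resdifflemma2} (already invoked inside Lemma~\ref{lem:respwrdiff}, but one can also cite it directly for the combined operator) then propagates the membership to all $\lambda\in\rho(A_{\delta'\!,\beta})\cap\rho(A_{\rm free})$. The main obstacle I anticipate is purely organizational rather than deep: making sure that the two applications of Lemma~\ref{lem:respwrdiff} use the \emph{right} pair of estimates from Proposition~\ref{prop:prelim}\,(ii) — the ``$2k+3/2$'' estimates for the $\delta'$-versus-Neumann part (which only needs to clear the weaker bound $\tfrac{n-1}{2l+1}$) and the ``$2k+1/2$'' improved estimate for the Neumann-versus-free part (which is what actually produces the sharp exponent $\tfrac{n-1}{2l}$), and that the smoothness of $V$ assumed, $W^{2m-2,\infty}_\Sigma(\dR^n\backslash\Sigma)$, is exactly what Proposition~\ref{prop:prelim}\,(ii) demands for $k$ up to $m-1$. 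I would also double-check that the trivial term $D_0(\lambda_0)=0$ in \eqref{two_summ} is handled automatically, as in the proof of Lemma~\ref{lem:respwrdiff}, so no boundary case in the induction on $l$ is lost.
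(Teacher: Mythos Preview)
Your proposal is correct and follows essentially the same route as the paper: split through $A_{\rm N,i,e}$, apply Lemma~\ref{lem:respwrdiff} with the Krein formula from Theorem~\ref{thm:delta'} and Proposition~\ref{prop:prelim}\,(ii)(a),(c) for the $\delta'$-vs-Neumann piece (yielding $\sS_{\frac{n-1}{2l+1},\infty}$, which is the content of Theorem~\ref{thm:S_infty3} in the paper), and apply Lemma~\ref{lem:respwrdiff} with the Krein formula involving $\wh M(\lambda_0)^{-1}$ and Proposition~\ref{prop:prelim}\,(ii)(b),(c) for the Neumann-vs-free piece (yielding the sharp $\sS_{\frac{n-1}{2l},\infty}$).

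One small correction to your final step: your appeal to Lemma~\ref{resdifflemma2} ``for the combined operator'' does not quite work as stated, because that lemma requires each individual term $T_{m,k}(\lambda_0)$ built from $H=A_{\delta'\!,\beta}$ and $K=A_{\rm free}$ to lie in the ideal, whereas you have only established the \emph{sum}. In fact you do not need this: each application of Lemma~\ref{lem:respwrdiff} already gives the bracket for all $\lambda$ in the relevant resolvent intersection, so the telescoped sum lies in $\sS_{\frac{n-1}{2l},\infty}$ for all $\lambda\in\rho(A_{\delta'\!,\beta})\cap\rho(A_{\rm free})\cap\rho(A_{\rm N,i,e})$. The remaining points --- those $\lambda\in\rho(A_{\delta'\!,\beta})\cap\rho(A_{\rm free})$ which happen to be isolated eigenvalues of $A_{\rm N,i,e}$ --- are handled in the paper by observing that the resolvent power difference $(A_{\delta'\!,\beta}-\lambda)^{-l}-(A_{\rm free}-\lambda)^{-l}$ is analytic on $\rho(A_{\delta'\!,\beta})\cap\rho(A_{\rm free})$, so membership in the closed ideal $\sS_{\frac{n-1}{2l},\infty}$ extends across these isolated points.
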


\begin{proof}
First we apply Lemma~\ref{lem:respwrdiff} to the difference of the $l$th powers of the
resolvents of $A_{\rm free}$ and $A_{\rm N,i,e}$.
Fix an arbitrary $\lambda_0\in\dC\setminus\dR$ and let $\wh\gamma$ and $\wh M$ be the $\gamma$-field and
Weyl function associated with the quasi boundary triple in Proposition~\ref{prop:qbt2}.
Since the operators $A_{\rm free}$ and $A_{\rm N,i,e}$ are both self-adjoint, in analogy
to \eqref{krein1} we have
\begin{equation*}
\label{eq:0}
  (A_{\rm free}-\lambda_0)^{-1} - (A_{\rm N,i,e}-\lambda_0)^{-1}
  = -\wh\gamma(\lambda_0)\wh M(\lambda_0)^{-1}\wh\gamma(\ov\lambda_0)^*;
\end{equation*}
see \cite[Corollary 3.11]{BLL10}.
Furthermore, by Proposition~\ref{gammaprop}\,(v) and Proposition~\ref{prop:qbt2}\,(iii)
the operator $\wh M(\lambda_0)$ is bijective and closed as an operator from $L^2(\Sigma)$
onto $H^1(\Sigma)$. Hence $\dom \wh M(\lambda_0)^{-1}=H^1(\Sigma)$
and  $\wh M(\lambda_0)^{-1}$ is closed as an operator from
$H^1(\Sigma)$ into $L^2(\Sigma)$. Thus, we can conclude that $\wh M(\lambda_0)^{-1}\in \cB(H^1(\Sigma), L^2(\Sigma))$.
Set
\[
  H \defeq A_{\rm free}, \quad K \defeq A_{\rm N,i,e}, \quad
  B \defeq -\wh\gamma(\lambda_0), \quad C \defeq \wh M(\lambda_0)^{-1}\wh\gamma(\ov\lambda_0)^*.
\]
Then Proposition~\ref{prop:prelim}\,(ii)\,(b) and (c) imply that the assumptions
in Lemma~\ref{lem:respwrdiff} are satisfied with
\[
  a = \frac{2}{n-1}\,, \quad b_1 = \frac{3/2}{n-1}\,, \quad b_2 = \frac{1/2}{n-1}\,,\quad r=m.
\]
Since $b=b_1+b_2-a=0$, Lemma~\ref{lem:respwrdiff} implies that
\begin{equation}\label{Sinfty}
  (A_{\rm free} - \lambda)^{-l} - (A_{\rm N,i,e} - \lambda)^{-l}
  \in \sS_{\frac{n-1}{2l},\infty}\bigl(L^2(\dR^n)\bigr)
\end{equation}
for all $\lambda\in\rho(A_{\rm free})\cap\rho(A_{\rm N,i,e})$ and all $l=1,2,\dots,m$.
This observation together with Theorem~\ref{thm:S_infty3} shows that
\begin{equation}\label{ztr}
  (A_{\delta'\!,\beta} - \lambda)^{-l}-(A_{\rm free} - \lambda)^{-l}
  \in \sS_{\frac{n-1}{2l},\infty}\bigl(L^2(\dR^n)\bigr)
\end{equation}
for all $l=1,2,\dots,m$ and for all $\lambda\in\rho(A_{\rm \delta'\!,\beta})
\cap\rho(A_{\rm free})\cap\rho(A_{\rm N,i,e})$.
As the resolvent power difference in \eqref{ztr} is analytic in $\lambda$, it
follows that \eqref{ztr} holds also for those points
$\lambda\in\rho(A_{\rm \delta'\!,\beta})\cap\rho(A_{\rm free})$ which are isolated
eigenvalues of $A_{\rm N,i,e}$; note that we know already that the essential spectra
of $A_{\rm N,i,e}$, $A_{\delta'\!,\beta}$ and $A_{\rm free}$ coincide because the
relations \eqref{Sinfty} and \eqref{ztr} are true at least for non-real $\lambda$.
\end{proof}

\begin{rem}
We note that in \eqref{Sinfty} it is shown
that the difference of the $m$th powers of the resolvents of $A_{\rm free}$ and $A_{\rm N,i,e}$ belongs to
the class $\sS_{\frac{n-1}{2m},\infty}$ provided  $V\in W^{2m-2,\infty}_\Sigma(\dR^n\backslash\Sigma)$.
This is a slight improvement of a result in \cite{B62}. For infinitely smooth $V$ the asymptotics of the singular values
have been studied in \cite{BS79,BS80} and \cite{G84}.
\end{rem}

The following corollary is the counterpart of Corollary~\ref{cor1} for the
scattering system $\{A_{\delta^\prime\!,\beta},A_{\rm free}\}$.

\begin{cor}\label{cor2}
Let the assumptions be as in Theorem~\ref{thm:S_infty4}. If $V\in W^{k, \infty}_{\Sigma}(\dR^n\backslash\Sigma)$
for some even $k$ and $k>n-3$, then the wave operators $W_\pm(A_{\delta^\prime\!,\beta},A_{\rm free})$
exist and are complete, and hence the absolutely continuous parts of
$A_{\delta^\prime\!,\beta}$ and $A_{\rm free}$ are unitarily equivalent.

In particular, if $V=0$, then $W_\pm(A_{\delta'\!,\beta},A_{\rm free})$ exist and are complete
for any $n\ge2$ and $\sigma_{\rm ac}(A_{\delta'\!,\beta})=[0,\infty)$.
\end{cor}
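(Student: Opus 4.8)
The plan is to read the statement off directly from Theorem~\ref{thm:S_infty4}, combined with the embedding $\sS_{p',\infty}\subset\sS_p$ for $p'<p$ from Lemma~\ref{splemma}\,(iii) and the abstract scattering criterion recalled just before the corollary. First I would translate the smoothness hypothesis into the index appearing in Theorem~\ref{thm:S_infty4}. Since $k$ is a non-negative even integer (note $k>n-3\ge-1$ forces $k\ge0$), the number $m\defeq\tfrac{k}{2}+1$ is a positive integer with $2m-2=k$, so $V\in W^{2m-2,\infty}_{\Sigma}(\dR^n\backslash\Sigma)$ and Theorem~\ref{thm:S_infty4} applies with $l=m$, giving
\[
  (A_{\delta'\!,\beta}-\lambda)^{-m}-(A_{\rm free}-\lambda)^{-m}\in\sS_{\frac{n-1}{2m},\infty}\bigl(L^2(\dR^n)\bigr)
\]
for all $\lambda\in\rho(A_{\delta'\!,\beta})\cap\rho(A_{\rm free})$.

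Next I would observe that the hypothesis $k>n-3$ means precisely $2m=k+2>n-1$, hence $\frac{n-1}{2m}<1$, so Lemma~\ref{splemma}\,(iii) yields $\sS_{\frac{n-1}{2m},\infty}\subset\sS_1=\frS_1$. Thus the $m$th resolvent power difference of $A_{\delta'\!,\beta}$ and $A_{\rm free}$ is a trace class operator, and by the criterion discussed before the corollary (see \cite[Theorem~X.4.8]{K95}) the wave operators $W_\pm(A_{\delta'\!,\beta},A_{\rm free})$ exist and are complete. Completeness then gives unitary equivalence of the absolutely continuous parts of $A_{\delta'\!,\beta}$ and $A_{\rm free}$, cf.\ \cite[Theorem~X.4.12]{K95}.

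For the final assertion I would take $V\equiv0$: then $V$ lies in $W^{k,\infty}_{\Sigma}(\dR^n\backslash\Sigma)$ for \emph{every} $k$, so the first part applies with any even integer $k>n-3$ and gives existence and completeness of $W_\pm(-\Delta_{\delta'\!,\beta},-\Delta_{\rm free})$ for all $n\ge2$. Unitary equivalence of the absolutely continuous parts together with $\sigma_{\rm ac}(-\Delta_{\rm free})=[0,\infty)$ and $\sess(-\Delta_{\delta'\!,\beta})=[0,\infty)$ from Theorem~\ref{thm:smb} then yields $\sigma_{\rm ac}(-\Delta_{\delta'\!,\beta})=[0,\infty)$. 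There is no genuine obstacle here — the corollary is a formal consequence of Theorem~\ref{thm:S_infty4} — and the only point requiring care is the bookkeeping identity $k=2m-2$, which is exactly what makes the Schatten exponent $\frac{n-1}{2m}$ drop below $1$ precisely when $k>n-3$.
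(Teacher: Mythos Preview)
Your argument is correct and is exactly the intended one: the paper does not spell out a proof of this corollary, but the discussion preceding Corollary~\ref{cor1} makes clear that both corollaries are meant to follow from the corresponding $\sS_{p,\infty}$-estimate (here Theorem~\ref{thm:S_infty4}) via the embedding $\sS_{p',\infty}\subset\sS_1$ for $p'<1$ and the trace class criterion \cite[Theorem~X.4.8]{K95}. Your bookkeeping $k=2m-2$, $\tfrac{n-1}{2m}<1\Leftrightarrow k>n-3$ is precisely the translation the paper has in mind.
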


The next result on the $\sS_{p,\infty}$-properties of the resolvent power differences
of $A_{\delta'\!,\beta}$ and $A_{\rm N,i,e}$ completes the proof of Theorem~\ref{thm:S_infty4},
but is also of independent interest. We do not formulate the corresponding
corollary for the scattering system $\{A_{\delta'\!,\beta},A_{\rm N,i,e}\}$.

\begin{thm}\label{thm:S_infty3}
Let $\beta$ be a real-valued function on $\Sigma$ such that $1/\beta\in L^\infty(\Sigma)$, and
let $A_{\delta'\!,\beta}$ and $A_{\rm N,i,e}$
be the self-adjoint operators defined in~\eqref{eq:delta'} and \eqref{ADANoplus2}, respectively.
Assume that $V\in W^{2m-2,\infty}_{\Sigma}(\dR^n\backslash\Sigma)$ for some $m\in\dN$. Then
\begin{equation*}
  (A_{\delta'\!,\beta} - \lambda)^{-l}-(A_{\rm N,i,e} - \lambda)^{-l}
  \in \sS_{\frac{n-1}{2l+1},\infty}\bigl(L^2(\dR^n)\bigr)
\end{equation*}
for all $l = 1,2,\dots,m$ and all $\lambda\in\rho(A_{\delta'\!,\beta})\cap\rho(A_{\rm N,i,e})$.
\end{thm}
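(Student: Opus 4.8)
The plan is to apply Lemma~\ref{lem:respwrdiff} to the pair $\{A_{\delta'\!,\beta},A_{\rm N,i,e}\}$ exactly as Theorem~\ref{thm:S_infty2} applied it to $\{A_{\delta,\alpha},A_{\rm free}\}$, but now using the quasi boundary triple $\wh\Pi=\{L^2(\Sigma),\wh\Gamma_0,\wh\Gamma_1\}$ from Proposition~\ref{prop:qbt2} rather than $\wt\Pi$. First I would fix an arbitrary $\lambda_0\in\dC\setminus\dR$ and invoke Theorem~\ref{thm:delta'}\,(ii), which gives the Krein-type factorization
\[
  (A_{\delta'\!,\beta}-\lambda_0)^{-1}-(A_{\rm N,i,e}-\lambda_0)^{-1}
  = \wh\gamma(\lambda_0)\bigl(I-\beta^{-1}\wh M(\lambda_0)\bigr)^{-1}\beta^{-1}\,\wh\gamma(\ov\lambda_0)^*,
\]
where $(I-\beta^{-1}\wh M(\lambda_0))^{-1}\beta^{-1}\in\cB(L^2(\Sigma))$ because $\wh M(\lambda_0)$ is compact in $L^2(\Sigma)$ and $1/\beta\in L^\infty(\Sigma)$. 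This realises the hypothesis \eqref{fact} of Lemma~\ref{lem:respwrdiff} with $H=A_{\delta'\!,\beta}$, $K=A_{\rm N,i,e}$, $B=\wh\gamma(\lambda_0)$ and $C=(I-\beta^{-1}\wh M(\lambda_0))^{-1}\beta^{-1}\,\wh\gamma(\ov\lambda_0)^*$.

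Next I would verify the Schatten-class inputs \eqref{prodinSr}. For the factor $(K-\lambda_0)^{-k}B=(A_{\rm N,i,e}-\lambda_0)^{-k}\wh\gamma(\lambda_0)$ I would use Proposition~\ref{prop:prelim}\,(ii)\,(c), which gives membership in $\sS_{\frac{n-1}{2k+3/2},\infty}$ for $k=0,1,\dots,m-1$ (this is where the hypothesis $V\in W^{2m-2,\infty}_\Sigma(\dR^n\backslash\Sigma)$ enters, via the local elliptic-regularity Lemma~\ref{lem:smoothing}). For the factor $C(K-\lambda_0)^{-k}=(I-\beta^{-1}\wh M(\lambda_0))^{-1}\beta^{-1}\,\wh\gamma(\ov\lambda_0)^*(A_{\rm N,i,e}-\lambda_0)^{-k}$ I would note that the bounded operator $(I-\beta^{-1}\wh M(\lambda_0))^{-1}\beta^{-1}$ on $L^2(\Sigma)$ does not spoil Schatten membership, so by Proposition~\ref{prop:prelim}\,(ii)\,(a) this factor lies in $\sS_{\frac{n-1}{2k+3/2},\infty}$ as well. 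Hence \eqref{prodinSr} holds with $a=\frac{2}{n-1}$, $b_1=b_2=\frac{3/2}{n-1}$ and $r=m$; then $b=b_1+b_2-a=\frac{1}{n-1}$ and $al+b=\frac{2l+1}{n-1}$, so Lemma~\ref{lem:respwrdiff} yields
\[
  (A_{\delta'\!,\beta}-\lambda)^{-l}-(A_{\rm N,i,e}-\lambda)^{-l}\in\sS_{\frac{n-1}{2l+1},\infty}\bigl(L^2(\dR^n)\bigr)
\]
for all $l=1,\dots,m$ and all $\lambda\in\rho(A_{\delta'\!,\beta})\cap\rho(A_{\rm N,i,e})$, which is the claim.

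The one point that needs a little care—and which I expect to be the main obstacle—is ensuring that $C(K-\lambda_0)^{-k}$ genuinely has the stated Schatten exponent. The subtlety is that $\wh\gamma(\ov\lambda_0)^*=\wh\Gamma_1(A_{\rm N,i,e}-\lambda_0)^{-1}$ is built from the \emph{second} boundary map $\wh\Gamma_1$, whose range is $H^1(\Sigma)$, not $L^2(\Sigma)$; one should check (as in the proof of Proposition~\ref{prop:prelim}) that composing with the extra bounded multiplication/inversion operator on $L^2(\Sigma)$ does not interfere with the range computation $\ran\bigl(\wh\gamma(\ov\lambda_0)^*(A_{\rm N,i,e}-\lambda_0)^{-k}\bigr)\subset H^{2k+3/2}(\Sigma)$ that Lemma~\ref{le.s_emb} requires—indeed the bounded operator is applied \emph{after} the trace, in $L^2(\Sigma)$, so it only weakens, never strengthens, the Sobolev regularity, and part (a) of Proposition~\ref{prop:prelim}\,(ii) (which already has target space $L^2(\Sigma)$) is exactly what is needed. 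A final routine remark, as in the proof of Theorem~\ref{thm:S_infty4}, is that the resolvent power difference is analytic in $\lambda$, so once the inclusion is known for $\lambda\in\dC\setminus\dR$ it extends to all $\lambda\in\rho(A_{\delta'\!,\beta})\cap\rho(A_{\rm N,i,e})$.
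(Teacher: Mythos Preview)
Your proof is correct and follows essentially the same approach as the paper: fix $\lambda_0\in\dC\setminus\dR$, use the Krein factorization from Theorem~\ref{thm:delta'}\,(ii), and apply Lemma~\ref{lem:respwrdiff} with $H=A_{\delta'\!,\beta}$, $K=A_{\rm N,i,e}$, $B=\wh\gamma(\lambda_0)$, $C=(I-\beta^{-1}\wh M(\lambda_0))^{-1}\beta^{-1}\wh\gamma(\ov\lambda_0)^*$ and the parameters $a=\tfrac{2}{n-1}$, $b_1=b_2=\tfrac{3/2}{n-1}$, $r=m$, invoking Proposition~\ref{prop:prelim}\,(ii)\,(a) and (c). One small remark: your final analyticity comment is unnecessary here, since Lemma~\ref{lem:respwrdiff} already delivers the conclusion directly for all $\lambda\in\rho(A_{\delta'\!,\beta})\cap\rho(A_{\rm N,i,e})$ (the analyticity step is only needed in Theorem~\ref{thm:S_infty4} to remove the extra condition $\lambda\in\rho(A_{\rm N,i,e})$).
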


\begin{proof}
As in the proofs of Theorem~\ref{thm:S_infty2} and \ref{thm:S_infty4}
fix $\lambda_0\in\dC\setminus\dR$ and let $\wh\gamma$ and $\wh M$ be the $\gamma$-field and
Weyl function associated with the quasi boundary triple in Proposition~\ref{prop:qbt2}.
By Theorem~\ref{thm:delta'} the resolvent difference of $A_{\delta^\prime\!,\beta}$ and $A_{\rm N,i,e}$
at the point $\lambda_0$ can be written in the form
\begin{equation*}
  (A_{\delta^\prime\!,\beta}-\lambda_0)^{-1} - (A_{\rm N,i,e}-\lambda_0)^{-1}
  = \wh\gamma(\lambda_0)\bigl(I - \beta^{-1}\wh M(\lambda_0)\bigr)^{-1}\beta^{-1}\wh\gamma(\ov\lambda_0)^*,
\end{equation*}
where $(I - \beta^{-1}\wh M(\lambda_0))^{-1}\beta^{-1}\in\cB(L^2(\Sigma))$.
Proposition~\ref{prop:prelim}\,(ii)\,(a) and (c) imply that the assumptions in
Lemma~\ref{lem:respwrdiff} are satisfied with
\begin{alignat*}{2}
  H &= A_{\delta'\!,\beta}, \quad & K &= A_{\rm N,i,e}, \quad
  B = \wh\gamma(\lambda_0), \quad
  C = \bigl(I - \beta^{-1}\wh M(\lambda_0)\bigr)^{-1}\beta^{-1}\wh\gamma(\ov\lambda_0)^*, \\
  a &= \frac{2}{n-1}\,, \quad & b_1 &= b_2 = \frac{3/2}{n-1}\,, \quad r=m.
\end{alignat*}
Since $b=b_1+b_2-a = \frac{1}{n-1}$, Lemma~\ref{lem:respwrdiff} implies the assertion of the theorem.
\end{proof}

\subsection*{Acknowledgements}
The authors gratefully acknowledge the hospitality and stimulating working atmosphere 
at the Isaac Newton Institute for Mathematical Sciences (Cambridge, UK).



\begin{thebibliography}{99}

\bibitem{AGW10} H.~Abels, G.~Grubb and I.~Wood, Extension theory and Krein-type resolvent formulae for nonsmooth boundary value problems, 
Preprint: arXiv:1008.3281.

\bibitem{AF03} R.\,A.~Adams and J.\,J.\,F.~Fournier,
\textit{Sobolev Spaces}, 2nd edition.
Pure and Applied Mathematics, vol.~140. Elsevier/Academic Press, Amsterdam, 2003.





\bibitem{AGHH05}
S.~Albeverio, F.~Gesztesy, R.~Hoegh-Krohn and H.~Holden,
\textit{Solvable Models in Quantum Mechanics}. With an appendix by Pavel Exner.
AMS Chelsea Publishing, 2005.

\bibitem{AK99}
S.~Albeverio and P.~Kurasov,
\textit{Singular Perturbations of Differential Operators. Solvable Schr\"odinger Type Operators}.
Lond. Math. Soc. Lecture Note Series, vol. 271, Cambridge University Press, Cambridge, 1999.


\bibitem{AP04} W.\,O.~Amrein and D.\,B.~Pearson,
$M$ operators: a generalisation of Weyl--Titchmarsh theory,
\textit{J.\ Comput.\ Appl.\ Math.} \textbf{171} (2004), 1--26.


\bibitem{AGS87} J.-P.~Antoine, F.~Gesztesy and J.~Shabani,
Exactly solvable models of sphere interactions in quantum mechanics,
\textit{J.\ Phys.\ A} \textbf{20} (1987), 3687--3712.




\bibitem{AGMT10} M.\,S.~Ashbaugh, F.~Gesztesy, M.~Mitrea and G.~Teschl,
Spectral theory for perturbed Krein Laplacians in nonsmooth domains,
\textit{Adv.\ Math.} \textbf{223} (2010), 1372--1467.

\bibitem{BL07} J.~Behrndt and M.~Langer,
Boundary value problems for elliptic partial differential operators on
bounded domains,
\textit{J.\ Funct.\ Anal.} \textbf{243} (2007), 536--565.

\bibitem{BL11} J.~Behrndt and M.~Langer,
Elliptic operators, Dirichlet-to-Neumann maps and quasi boundary triples,
Lond. Math.\ Soc.\ Lecture Note Series, vol. 404 (2012), 121--160.

\bibitem{BLLLP10} J.~Behrndt, M.~Langer, I.~Lobanov, V.~Lotoreichik and
I.\,Yu.~Popov,
A remark on Schatten--von Neumann properties of resolvent differences of
generalized Robin Laplacians on bounded domains,
\textit{J.\ Math.\ Anal.\ Appl.} \textbf{371} (2010), 750--758.

\bibitem{BLL10} J.~Behrndt, M.~Langer and  V.~Lotoreichik,
Spectral estimates for resolvent differences of self-adjoint elliptic operators,
Preprint: arXiv:1012.4596.





\bibitem{B62} M.\,Sh.~Birman,
Perturbations of the continuous spectrum of a singular elliptic operator
by varying the boundary and the boundary conditions,
\textit{Vestnik Leningrad.\ Univ.} \textbf{17} (1962), 22--55 (in Russian);
translated in: \textit{Amer.\ Math.\ Soc.\ Transl.} \textbf{225} (2008), 19--53.


\bibitem{BS79} M.\,Sh.~Birman and M.\,Z.~Solomjak,
Asymptotic behavior of the spectrum of variational problems on solutions of elliptic equations, 
\textit{Sibirsk. Mat. Zh.} \textbf{20} (1979), 3--22, 204 (in Russian).


\bibitem{BS80} M.\,Sh.~Birman and M.\,Z.~Solomjak,
Asymptotic behavior of the spectrum of variational problems on
solutions of elliptic equations in unbounded domains,
\textit{Funktsional.\ Anal.\ i Prilozhen.} \textbf{14} (1980), 27--35 (in Russian);
translated in: \textit{Funct.\ Anal.\ Appl.} \textbf{14} (1981), 267--274.

\bibitem{BS87}
M.\,Sh.~Birman and M.\,Z.~Solomjak,
\textit{Spectral Theory of Selfadjoint Operators in Hilbert Spaces},
Dordrecht, Holland, 1987.

\bibitem{BSS00}
M.\,Sh.~Birman, T.\,A.~Suslina and R.\,G.~Shterenberg,
Absolute continuity of the two-dimensional Schr\"odinger operator with delta
potential concentrated on a periodic system of curves,
\textit{Algebra i Analiz} \textbf{12} (2000), 140--177 (in Russian);
translated in: \textit{St. Petersburg Math. J.} \textbf{12} (2001), 983--1012.

\bibitem{BK08}
D.~Borisov and D.~Krejcirik,
$\cP\cT$-symmetric waveguides,
\textit{Integral Equations Operator Theory} \textbf{62} (2008), 489--515.

\bibitem{BEKS94}
J.\,F.~Brasche, P.~Exner, Yu.\,A.~Kuperin and P.~Seba,
Schr\"odinger operators with singular interactions,
\textit{J.\ Math.\ Anal.\ Appl.} \textbf{184} (1994), 112--139.




\bibitem{BEW09}
B.\,M.~Brown, M.\,S.\,P.~Eastham and I.\,G.~Wood,
Estimates for the lowest eigenvalue of a star graph,
\textit{J.\ Math.\ Anal.\ Appl.} \textbf{354} (2009), 24--30.

\bibitem{Br76}
V.\,M.~Bruk,
A certain class of boundary value problems with a spectral parameter in the
boundary condition,
\textit{Mat.\ Sb.\ (N.S.)} \textbf{100 (142)} (1976), 210--216 (in Russian);
translated in: \textit{Math.\ USSR-Sb.} \textbf{29} (1976), 186--192.

\bibitem{CK83}
D.~Colton and R.~Kress,
\textit{Integral Equation Methods in Scattering Theory. Pure and Applied Mathematics}.
Wiley--Interscience Publication, New York, 1983.


\bibitem{Co88}
M.~Costabel,
Boundary integral operators on Lipschitz domains: elementary results,
\textit{SIAM J.\ Math.\ Anal.} \textbf{19} (1988), 613--626.


\bibitem{DS75}
P.~Deift and B.~Simon,
On the decoupling of finite singularities from the question of asymptotic completeness
in two body quantum systems,
\textit{J.\ Funct.\ Anal.} \textbf{23} (1976), 218--238.

\bibitem{DHMS00}
V.\,A.~Derkach, S.~Hassi, M.\,M.~Malamud and H.\,S.\,V.~de~Snoo,
Generalized resolvents of symmetric operators and admissibility,
\textit{Methods Funct.\ Anal.\ Topology} \textbf{6} (2000), 24--55.

\bibitem{DHMS06}
V.\,A.~Derkach, S.~Hassi, M.\,M.~Malamud and H.\,S.\,V.~de~Snoo,
Boundary relations and their Weyl families,
\textit{Trans.\ Amer.\ Math.\ Soc.} \textbf{358} (2006), 5351--5400.



\bibitem{DM91} V.\,A.~Derkach and M.\,M.~Malamud,
Generalized resolvents and the boundary value problems for Hermitian operators
with gaps,
\textit{J.\ Funct.\ Anal.} \textbf{95} (1991), 1--95.

\bibitem{DM95} V.\,A.~Derkach and M.\,M.~Malamud,
The extension theory of Hermitian operators and the moment problem,
\textit{J.\ Math.\ Sci.\ (New York)} \textbf{73} (1995), 141--242.


\bibitem{Ex3} P.~Exner,
Spectral properties of Schr\"odinger operators with a
strongly attractive $\delta$ interaction supported by a surface,
\textit{Proc.\ of the NSF Summer Research Conference (Mt.\ Holyoke 2002)};
AMS ``Contemporary Mathematics" Series, vol.~339,
Providence, R.I., 2003; pp.~25--36.

\bibitem{Ex2} P.~Exner,
An isoperimetric problem for leaky loops and related mean-chord inequalities,
\textit{J.\ Math.\ Phys.} \textbf{46} (2005), 062105, 10pp.

\bibitem{E08} P.~Exner,
Leaky quantum graphs: a review,
in: \textit{Analysis on Graphs and its Applications}.
Selected papers based on the Isaac Newton Institute for
Mathematical Sciences programme, Cambridge, UK, 2007.
\textit{Proc.\ Symp.\ Pure Math.} \textbf{77} (2008), 523--564.

\bibitem{EF07}
P.~Exner and M.~Fraas,
On the dense point and absolutely continuous spectrum for Hamiltonians with concentric
$\delta$ shells,
\textit{Lett.\ Math.\ Phys.} \textbf{82} (2007), 25--37.

\bibitem{EF09}
P.~Exner and M.~Fraas,
On geometric perturbations of critical Schr\"odinger operators with a surface interaction,
\textit{J.\ Math.\ Phys.} \textbf{50} (2009), 112101, 12~pp.

\bibitem{EI01}
P.~Exner and I.~Ichinose,
Geometrically induced spectrum in curved leaky wires,
\textit{J.\ Phys.\ A} \textbf{34} (2001), 1439--1450.

\bibitem{EK02}
P.~Exner and S.~Kondej,
Curvature-induced bound states for a $\delta$ interaction supported by a
curve in $\mathbb{R}^3$,
\textit{Ann.\ Henri Poincar\'e} \textbf{3} (2002), 967--981.

\bibitem{EK03} P.~Exner and S.~Kondej,
Bound states due to a strong $\delta$ interaction supported by a curved surface,
\textit{J.\ Phys.\ A} \textbf{36} (2003), 443--457.

\bibitem{EK05}
P.~Exner and S.~Kondej,
Scattering by local deformations of a straight leaky wire,
\textit{J.\ Phys.\ A} \textbf{38} (2005) 4865--4874.

\bibitem{EN03}
P.~Exner and K.~N\v{e}mcov\'{a},
Leaky quantum graphs: approximations by point inter\-action Hamiltonians,
\textit{J.\ Phys.\ A} \textbf{36} (2003), 10173--10193.

\bibitem{EY01}
P.~Exner and K.~Yoshitomi,
Band gap of the Schr\"odinger operator with a strong $\delta$-interaction
on a periodic curve,
\textit{Ann.\ Henri Poincar\'e} \textbf{2} (2001), 1139--1158.

\bibitem{EY02a}
P.~Exner and K.~Yoshitomi,
Asymptotics of eigenvalues of the Schr\"odinger operator with a strong
$\delta$-interaction on a loop,
\textit{J.\ Geom.\ Phys.} \textbf{41} (2002), 344--358.

\bibitem{EY02b}
P.~Exner and K.~Yoshitomi,
Persistent currents for 2D Schr\"odinger operator with a
strong $\delta$-interaction on a loop,
\textit{J.\ Phys.\ A} \textbf{35} (2002), 3479--3487.

\bibitem{EY04}
P.~Exner and K.~Yoshitomi,
Eigenvalue asymptotics for the Schr\"odinger operator with
a $\delta$-inter\-action on a punctured surface,
\textit{Lett.\ Math.\ Phys.} \textbf{65} (2003), 19--26;
erratum in \textbf{67} (2004), 81--82.


\bibitem{Fr67}
R.\,S.~Freeman,
Closed operators and their adjoints associated with elliptic differential operators,
\textit{Pacific J.\ Math.} \textbf{22} (1967), 71--97.


\bibitem{GM08-1} F.~Gesztesy and M.~Mitrea,
Generalized Robin boundary conditions, Robin-to-Dirichlet maps,
and Krein-type resolvent formulas for Schr\"odinger operators on bounded
Lipschitz domains,
in: \textit{Perspectives in partial differential equations, harmonic analysis and applications},
Proc.\ Sympos.\ Pure Math., vol.~79, Amer.\ Math.\ Soc., Providence, RI, 2008,
pp.~105--173.

\bibitem{GM08} F.~Gesztesy and M.~Mitrea,
Robin-to-Robin maps and Krein-type resolvent formulas for Schr\"odinger
operators on bounded Lipschitz domains,
\textit{Oper.\ Theory Adv.\ Appl.} \textbf{191} (2009), 81--113.

\bibitem{GM09}
F.~Gesztesy and M.~Mitrea,  Nonlocal Robin Laplacians and some remarks on a paper by Filonov on eigenvalue inequalities, \textit{J.\ Differential Equations} \textbf{247} (2009), 2871--2896.

\bibitem{GM11}
F.~Gesztesy and M.~Mitrea,  A description of all self-adjoint extensions
of the Laplacian and Krein-type resolvent formulas on non-smooth domains,
\textit{J.\ Anal.\ Math.} \textbf{113} (2011), 53--172.

\bibitem{GK69} I.\,C.~Gohberg and M.\,G.~Kre\u\i{}n,
\textit{Introduction to the Theory of Linear Nonselfadjoint Operators}.
Transl.\ Math.\ Monogr., vol.~18.,
Amer.\ Math.\ Soc., Providence, RI, 1969.

\bibitem{GG91} V.\,I.~Gorbachuk and M.\,L.~Gorbachuk,
\textit{Boundary Value Problems for Operator Differential Equations}.
Kluwer Academic Publishers, Dordrecht, 1991.






\bibitem{G84} G.~Grubb,
Singular Green operators and their spectral asymptotics,
\textit{Duke Math.\ J.} \textbf{51} (1984), 477--528.

\bibitem{G96} G.~Grubb,
\textit{Functional Calculus of Pseudodifferential Boundary Problems}.
Progress in Mathematics, Basel, 1996.

\bibitem{GBook08} G.~Grubb,
\textit{Distributions and Operators}.
Graduate Texts in Mathematics, vol.~252, Springer, 2008.





\bibitem{G11} G.~Grubb, 
The mixed boundary value problem, Krein resolvent formulas and spectral asymptotic estimates,
\textit{J.\ Math.\ Anal.\ Appl.} \textbf{382} (2011), 339--363.

\bibitem{G11-2} G.~Grubb,
Spectral asymptotics for Robin problems with a discontinuous coefficient,
\textit{J.\ Spectral Theory} \textbf{1} (2011), 155--177.



\bibitem{H89}
J.~Herczy\'nski,
On Schr\"odinger operators with distributional potentials,
\textit{J. Operator Theory} \textbf{21} (1989), 273--295.



\bibitem{K67}
T.~Kato,
Scattering theory with two Hilbert spaces,
\textit{J.\ Funct.\ Anal.} \textbf{1} (1967), 342--369.


\bibitem{K95} T.~Kato,
\textit{Perturbation Theory for Linear Operators}.
Springer-Verlag, Berlin, 1995.

\bibitem{Ko75} A.\,N.~Kochubei,
Extensions of symmetric operators and symmetric binary relations,
\textit{Math.\ Zametki} \textbf{17} (1975), 41--48 (in Russian);
translated in: \textit{Math.\ Notes} \textbf{17} (1975), 25--28.


\bibitem{KV07}
S.~Kondej and I.~Veseli\'c,
Lower bounds on the lowest spectral gap of singular potential Hamiltonians,
\textit{Ann.\ Henri Poincar\'e} \textbf{8} (2007), 109--134.





\bibitem{LM72} J.~Lions and E.~Magenes,
\textit{Non-Homogeneous Boundary Value Problems and Applications I}.
Springer-Verlag, Berlin--Heidelberg--New York, 1972.

\bibitem{LLP10}
I.~Lobanov, V.~Lotoreichik and I.\,Yu.~Popov,
Lower bound on the spectrum of the two-dimensional Schrödinger operator with a $\delta$-perturbation on a curve, 
\textit{Theor.\ Math.\ Phys.} \textbf{162} (2010), 332--340.



\bibitem{M10} M.\,M.~Malamud,
Spectral theory of elliptic operators in exterior domains,
\textit{Russ.\ J.\ Math.\ Phys.} \textbf{17} (2010), 96--125.


\bibitem{McL00}
W.~McLean,
\textit{Strongly Elliptic Systems and Boundary Integral Equations}.
Cambridge University Press, 2000.









\bibitem{PR09} A.~Posilicano and L.~Raimondi,
Krein's resolvent formula for self-adjoint extensions of
symmetric second-order elliptic differential operators,
\textit{J.\ Phys.\ A} \textbf{42} (2009), 015204, 11~pp.


\bibitem{RS72-I}
M.~Reed and B.~Simon, \textit{Methods of Modern Mathematical Physics. I. Functional Analysis.} Academic Press, New York-London, 1972.

\bibitem{RS79-III}
M.~Reed and B.~Simon,
\textit{Methods of Modern Mathematical Physics. III. Scattering Theory}.
Academic Press, New York-London, 1979.


\bibitem{R09}
V.~Ryzhov,
Weyl--Titchmarsh function of an abstract boundary value problem, operator colligations,
and linear systems with boundary control,
\textit{Complex Anal.\ Oper.\ Theory} \textbf{3} (2009), 289--322.




\bibitem{S88} J.~Shabani,
Finitely many $\delta$ interactions with supports on concentric spheres,
\textit{J.\ Math.\ Phys.} \textbf{29} (1988), 660--664.

\bibitem{S88-2} J.~Shabani, Some properties of the Hamiltonian describing a finite number of $\delta'$-interactions with support on concentric spheres, \textit{Nuovo\ Cimento B (11)} \textbf{101} (1988),  429--437.



\bibitem{S05} B.~Simon,
\textit{Trace Ideals and their Applications},
American Mathematical Society, Providence, RI, 2005.

\bibitem{SS01}
T.\,A.~Suslina and R.\,G.~Shterenberg,
Absolute continuity of the spectrum of the Schr\"odinger operator
with the potential concentrated on a periodic system of hypersurfaces,
\textit{Algebra i Analiz} \textbf{13} (2001), 197--240 (in Russian);
translated in: \textit{St.\ Petersburg Math. J.} \textbf{13} (2002), 859--891.

\bibitem{W87} J.~Wloka,
\textit{Partial Differential Equations}.
Cambridge University Press, Cambridge, 1987.

\bibitem{Y92}
D.\,R.~Yafaev,
\textit{Mathematical Scattering Theory. General Theory}.
Translated from the Russian by J.\,R.~Schulenberger.
Translations of Mathematical Monographs, vol.~105.
American Mathematical Society, Providence, RI, 1992.


\end{thebibliography}
\end{document}